\documentclass[11pt,a4paper]{article}
\usepackage[utf8]{inputenc}
\usepackage[T1]{fontenc}

\usepackage[t,lf]{spectral}

\usepackage{amsmath}
\usepackage{amsfonts}
\usepackage{amssymb}
\usepackage{amsthm}
\usepackage{ upgreek }

\usepackage[dvipsnames]{xcolor}

\usepackage{marginnote}


\usepackage{csquotes}

\usepackage{stmaryrd}

\usepackage{enumitem}

\usepackage{mathtools}

\usepackage{hyperref}
\hypersetup{
	pagebackref=true,    
	colorlinks=true,        
	breaklinks=true,       
	urlcolor= blue,        
	linkcolor= blue,        
	citecolor=blue,
}

\usepackage{cleveref}
\usepackage{autonum} 
\usepackage{calc}

\usepackage{pgf,tikz}
\usetikzlibrary{matrix,automata,arrows,positioning,calc,patterns,fit,calc,
decorations.pathreplacing,decorations.pathmorphing,decorations.markings,matrix,
tikzmark,shadows.blur}
\usetikzlibrary{cd}
 \usepgflibrary{fpu}

\usepackage{filecontents}

\crefformat{equation}{(#2#1#3)}

\newtheorem{theorem}{Theorem}[section]
\newtheorem{proposition}[theorem]{Proposition}
\newtheorem{lemma}[theorem]{Lemma}
\newtheorem{corollary}[theorem]{Corollary}

\theoremstyle{definition}

\newenvironment{remark}
  {\pushQED{\qed}\remarkx}
  {\popQED\endremarkx}



\numberwithin{equation}{section}

\input{macros.sty}

\usepackage{pgfplots}
\pgfplotsset{compat=newest}
\usepgfplotslibrary{groupplots}
\usetikzlibrary{datavisualization}

\pgfplotsset{
    every axis/.append style={
        label style={font=\small},
        tick label style={font=\small},
        legend style={font=\small},
        title style={font=\small},
    }
}

\usepackage{subfigure}

\colorlet{mygreen}{green!80!black}

\usepackage{caption}
\captionsetup{font=small}


\newcommand{\xf}{x_{\textup{fin}}}
\newcommand{\yf}{y_{\textup{fin}}}
\newcommand{\ybarf}{\bar{y}_{\textup{fin}}}
\newcommand{\ystar}{y^\star}
\newcommand{\xin}{x_{\textup{in}}}
\newcommand{\yin}{y_{\textup{in}}}
\newcommand{\ybarin}{\bar{y}_{\textup{in}}}

\newcommand{\cin}{c_{\textup{in}}}
\newcommand{\rf}{r_{\textup{fin}}}
\newcommand{\Omegahat}{\widehat{\Omega}}

\begin{document}


\title{The dynamic saddle--node bifurcation\\ with noise on the slow variable}
\author{Baptiste Bergeot, Nils Berglund and Israa Zogheib}
\date{December 11, 2025. Revised version, April 21, 2026.}   

\maketitle

\begin{abstract}
In this work, we analyse the effect of adding Gaussian white noise to the slow 
variable of a slow--fast system passing through a saddle--node (or fold)
bifurcation. This problem is mainly motivated by applications to non-equilibrium energy 
sinks. While the effect of adding noise to the fast variable, which is important 
for noise-induced tipping, has been previously analysed in detail, the case where 
the slow variable is perturbed by noise has not been considered before. Our main 
result is that the noise increases the slow variable on average. We compute 
the effect of the noise, to lowest order, on the expectation and variance of the slow 
variable after the bifurcation. The contribution of the noise can be explicitly expressed 
in terms of Airy functions. We also provide numerical simulations, which show that the 
expansion to lowest order matches the observations for fairly large values of the 
noise intensity. 
\end{abstract}

\leftline{\small 2020 {\it Mathematical Subject Classification.\/} 
60H10, 
34F05 (primary), 
70K70, 
70L05 (secondary). 
}
\noindent{\small{\it Keywords and phrases.\/}
Slow--fast system,
saddle--node bifurcation,
fold bifurcation, 
noise,
stochastic differential equation, 
non-linear energy sink.

\section{Introduction}
\label{sec:intro} 


In this work, we are interested in two-dimensional slow--fast systems 
near a saddle--node, or fold bifurcation, in the presence of noise. 
In the deterministic case, the study of these systems goes back to the 
works~\cite{PontRod} and~\cite{Haberman}. If $\eps$ denotes the time scale 
separation, the fast variable is known to track the stable part of the critical manifold 
at a distance that grows up to order $\eps^{1/3}$ when approaching the bifurcation 
point. After a delay of order $\eps^{2/3}$, the fast variable quickly exits the 
neighbourhood of the bifurcation point. 

The case when Gaussian white noise is added to the fast variable has been studied 
in~\cite{BG3} (see also~\cite[Section~3.3]{BGbook}). The main effect in that case is 
the existence of a threshold for the noise intensity, scaling like $\sqrt{\eps}$. 
When the noise intensity is below this threshold, the system behaves with high 
probability like the deterministic system. However, when the noise intensity 
exceeds the threshold, the fast variable is likely to cross the unstable 
critical manifold some time before reaching the bifurcation point. This has 
important consequences on noise-induced tipping~\cite{Lentonetal} and early 
warning signs~\cite{Schefferetal}, and has been further investigated 
in many works, see for 
instance~\cite{Kuehn_2011_tipping,PhysRevE.85.046202,Kuehn_2013_critical,10.1115/1.4034128,PhysRevE.96.030201,Kim:2018aa,Kuehn_Romano18}.

By contrast, in this work we are interested in the case where noise is added to 
the slow variable of the system, which leads to a very different behaviour. 
This study is motivated by the investigation of nonlinear energy sinks 
(NESs)~\cite{Gendelman2001,vakakis2001}, particularly when they are employed to 
mitigate self-sustained oscillations~\cite{Gendelman2010Phys}. In this context, the dynamical system under 
consideration generally consists of a mechanical self-sustained oscillator 
(SSO)--whose oscillations we seek to attenuate--coupled with the NES, i.e., 
a small mass connected to the SSO via a linear damper and an essentially nonlinear 
spring (typically purely cubic). After applying an averaging procedure, the coupled 
system reduces to a (2,1)-fast--slow system. The two fast variables relate to the 
amplitude of the relative displacement between the SSO and the NES, and the phase 
difference between this relative displacement and the displacement of the SSO. 
The slow variable is related to the amplitude of the SSO motion. As presented in 
Figure~\ref{fig:intro1}, the deterministic dynamics of this fast--slow system are 
primilary governed (i) by its $S$-shaped one-dimensional critical manifold and 
(ii) by its equilibria, whose stability and position on the critical manifold 
depend on a bifurcation parameter. In particular, certain values of this parameter 
result in a configuration supporting relaxation oscillations with amplitudes smaller 
than those of the self-sustained oscillations that would occur without the NES 
(see Figure~\ref{fig:intro1}(a)). For larger values of the parameter, these relaxation 
oscillations are no longer possible, and a stable equilibrium is reached, corresponding 
to an SSO amplitude close to that observed in the absence of the NES 
(see Figure~\ref{fig:intro1}(b)). It therefore appears that the position of the 
trajectory’s endpoint on the right-hand attracting branch of the critical manifold, 
together with the location of the rightmost unstable equilibrium on this branch, 
determines which situation occurs: if the endpoint lies below the equilibrium, 
mitigation occurs; if it lies above, there is no mitigation. This means that, 
for a certain value of the bifurcation parameter, a tipping occurs between the 
two situations. In the seminal works of Gendelman 
\textit{et al.}~\cite{Gendelman2010SIAM,Gendelman2010Phys}, 
a multiple time scales analysis~\cite{nayfeh2008perturbation} is used to 
estimate the tipping value of the parameter by (i) obtaining analytically 
the equilibria and (ii) approximating the endpoint’s ordinate with that of 
the critical manifold’s left fold point, where the manifold loses stability 
via a saddle--node bifurcation. This approximation, sometimes referred to as 
the zero-order approximation, does not account for the specific dynamics 
that occur near the fold points, where the normal hyperbolicity of the critical 
manifold is lost. By means of the center manifold theorem, 
Bergeot~\cite{BERGEOT2021116109} addressed this limitation by reducing the 
dynamics near the left fold point to the normal form of a dynamic saddle-node 
bifurcation, which can be solved analytically. This approach allowed the author 
to provide an improved theoretical estimate of the parameter’s tipping value.

\begin{figure}[t!]
\begin{center}
\input{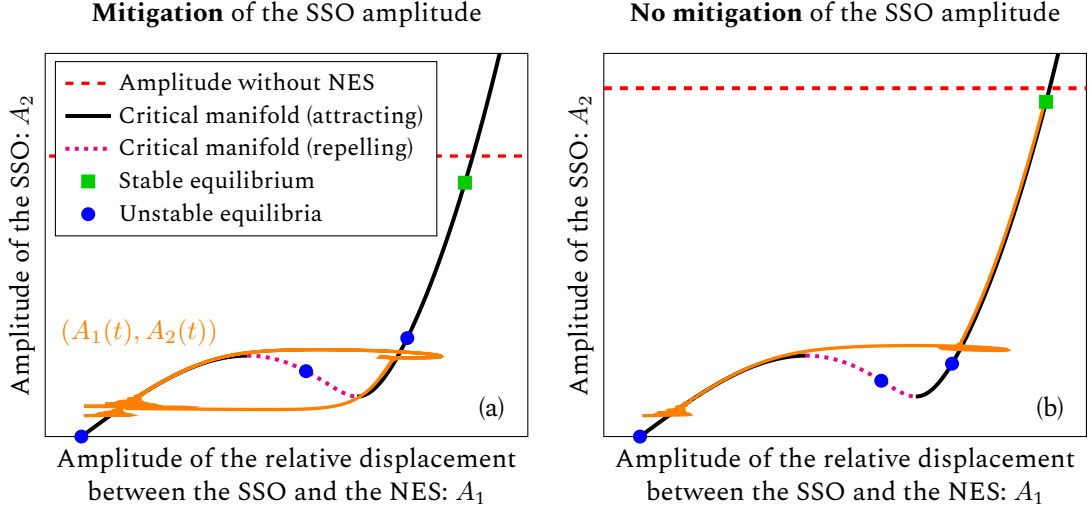}
\end{center}
\caption[]{Behaviour of the deterministic SSO–NES system in the vicinity of the 
tipping toward unmitigated responses. (a) Parameter below the tipping value 
(mitigated response): the system exhibits relaxation oscillations whose amplitudes 
remain smaller than those of the self-sustained oscillations that would occur 
without the NES. (b) Parameter above the tipping value (unmitigated response): 
the trajectory converges to a stable equilibrium, corresponding to an SSO amplitude 
close to that observed in the absence of the NES.}
\label{fig:intro1} 
\end{figure}

The influence of noise on this class of systems was investigated 
in \cite{BERGEOT2023104351}, where a small-amplitude white-noise forcing was applied 
to the SSO. Using stochastic averaging--following Roberts and 
Spanos \cite{Roberts1986}--the study first demonstrated that the SSO-NES stochastic 
system can be accurately reduced to a (2,1)-fast--slow system with noise acting only 
the slow variable, the drift part being approximated by the corresponding 
deterministic system. A Monte Carlo simulation approach then showed that the 
stochastic forcing can significantly alter the dynamic behaviour of the corresponding 
deterministic system, in particular by rendering certain configurations dangerous 
(i.e., unmitigated) that were safe in the deterministic case. The results also reveal 
complex stochastic dynamics, showing that the reasoning used to predict the system’s 
deterministic behaviour can fail in the presence of noise. The present work paves 
the way for identifying the underlying mechanisms that govern the stochastic behaviour. 
To do this, we exploit the fact that, near the left fold point of the deterministic 
critical manifold, the SSO-NES stochastic system reduces to the normal form of a 
dynamic saddle–node bifurcation (as done for the deterministic system 
in~\cite{BERGEOT2021116109}), with noise acting on the slow variable.

Our main result, Theorem~\ref{thm:main}, provides the first relevant terms of an expansion 
in the noise intensity of the first two moments of the slow variable after passing through the 
bifurcation. The most notable feature is that the expectation of the slow variable is 
increased by the noise.
In terms of the SSO-NES system, this suggests that noise may slightly 
promote the unmitigated responses.
While the effect is relatively small for weak noise, since the expectation 
changes by an amount proportional to the variance of the noise, numerical simulations indicate 
that our expansion is accurate for fairly large values of that variance. 

The remainder of this article is structured as follows. In Section~\ref{sec:results}, we 
give a precise formulation of the mathematical set-up, state and discuss the main results, and present 
numerical simulations.
The remaining sections are devoted to the proofs of the main results. 
Section~\ref{sec:structure} gives a brief overview of the structure of the proofs. 
Section~\ref{sec:one_slice} provides a detailed analysis in a small interval of the fast 
variable. In Section~\ref{sec:slices}, the small intervals are stitched together, to provide 
the proof of the main theorem on moments of the slow variable. 
Section~\ref{sec:DV} contains the proof of an auxiliary result, giving explicit expressions 
for the moments in terms of Airy functions.
Appendix~\ref{sec:airy} gives a short summary of some useful properties of Airy functions.

\subsubsection*{Acknowledgements:}

The authors thank two anonymous reviewers for their careful reading of the manuscript, 
and for their constructive remarks, that led to improvements in the presentation.


\section{Results}
\label{sec:results} 


\subsection{Set-up}
\label{ssec:setup} 

We are interested in the slow--fast system 
\begin{align}
 \6\bar x_{\bar t} &= \frac{1}{\eps} (\bar y_{\bar t} + \bar x_{\bar t}^2) \6\bar t \\
 \6\bar y_{\bar t} &= \6\bar t + \bar\sigma \6\overbar W_{\bar t}\;,
 \label{eq:sn_not_scaled} 
\end{align}
describing the normal form at a saddle-node bifurcation point. Here $\eps > 0$ 
is a small parameter measuring time scale separation, and 
$(\overbar W_{\bar t})_{\bar t\geqs0}$ is a standard Wiener process describing white noise, 
while $\bar\sigma > 0$ is a small parameter measuring the noise intensity. 

The scaling
\begin{equation}
 \bar t = \eps^{2/3} t\;, \qquad 
 \bar x_{\bar t} = \eps^{1/3} x_t\;, \qquad 
 \bar y_{\bar t} = \eps^{2/3} y_t 
 \label{eq:scaling_eps} 
\end{equation} 
results in the system
\begin{align}
 \6 x_t &= (y_t + x_t^2) \6t \\
 \6 y_t &= \6t + \sigma\6W_t\;,
 \label{eq:sn_scaled} 
\end{align}
where $(W_t)_{t\geqs0}$ is again a standard Wiener process, owing to the scaling 
property of Brownian motion, and 
\begin{equation}
 \sigma = \eps^{1/3}\bar \sigma
\end{equation} 
is a rescaled noise intensity. The scaling~\eqref{eq:scaling_eps} is 
motivated by the fact that the new variables $x$ and $y$ change by comparable 
amounts in a given times interval, when they have order $1$.
We will keep in mind the fact that results obtained for 
the rescaled system~\eqref{eq:sn_scaled} on a rectangle $[-a,a]\times[-b,b]$ will 
translate into results for the original system~\eqref{eq:sn_not_scaled} on 
the scaled rectangle $[-\eps^{1/3}a,\eps^{1/3}a]\times[-\eps^{2/3}b,\eps^{2/3}b]$. 
It is therefore of interest to obtain results for possibly large values of $a$ and $b$. 

The deterministic case $\sigma = 0$ is well-known~\cite{PontRod,Haberman}. 
The slow--fast system~\eqref{eq:sn_not_scaled} 
has a critical manifold $\setsuch{(\bar x,\bar y)}{\bar y = -\bar x^2}$, separating 
orbits with increasing and decreasing $\bar x$. After scaling, this manifold takes the equivalent 
form $\setsuch{(x,y)}{y = -x^2}$.

In the deterministic case $\sigma = 0$, using the theory of Riccati equations, the general 
solution of~\eqref{eq:sn_scaled} with initial condition $(\xin,\yin)$ can be written 
\begin{align}
 x^{\det}(t) &= \frac{\Ai'(-\yin-t) + K \Bi'(-\yin-t)}{\Ai(-\yin-t) + K\Bi(-\yin-t)}\;,\\
 y^{\det}(t) &= \yin + t\;, 
 \label{eq:xdet} 
\end{align} 
where $\Ai$ and $\Bi$ are Airy function (cf.\ Appendix~\ref{sec:airy}), 
and $K$ is determined by the condition $x^{\det}(0) = \xin$. It will sometimes be convenient to write, 
with a slight abuse of notation,  
\begin{equation}
\label{eq:xdet_y} 
 x^{\det}(y) = \frac{\Ai'(-y) + K \Bi'(-y)}{\Ai(-y) + K\Bi(-y)}
\end{equation} 
for the solution parametrised in terms of the $y$ coordinate. 
We will be particularly interested in the case $K = 0$, where we have the particular solution 
(called \emph{slow solution} in the case of the unscaled system~\eqref{eq:sn_not_scaled})
\begin{align}
 x^{\det}_0(t) &= \frac{\Ai'(-\yin-t)}{\Ai(-\yin-t)},\\
 y^{\det}_0(t) &= \yin + t\;.
 \label{eq:x0} 
\end{align}
This is because the asymptotics~\eqref{eq:Airy_asymptotics} of Airy functions imply that 
$(x^{\det}_0(t),y^{\det}_0(t))$ 
converges to the critical manifold as $t\to-\infty$. On the other hand, we have 
\begin{equation}
\lim_{t\to t^\star} x^{\det}_0(t) = +\infty\;,
\qquad 
\lim_{t\to t^\star} y^{\det}_0(t) = y^\star\;,
\end{equation} 
where $t^\star = \ystar - \yin$, and 
\begin{equation}
 \ystar = 2.338107410459767 \dots
 \label{eq:ystar}
\end{equation} 
is the negative of the largest zero of the Airy function $\Ai$ (cf.\ the Online 
Encyclopedia of Integer Sequences \texttt{OEIS A096714}). 

Figure~\ref{fig:dsnb} shows an example of the general solution \eqref{eq:xdet} 
of~\eqref{eq:sn_scaled} with $\sigma = 0$, i.e., the parametric curve $\left(x^{\det}(t), y^{\det}(t)\right)$ 
for the initial condition $(\xin,\yin)=(-3,-2)$. The figure also displays the reference 
trajectory $\left(x_0^{\det}(t), y_0^{\det}(t)\right)$ (see \eqref{eq:x0}), the critical 
manifold $y=-x^2$, and the limit value~$\ystar$.

\begin{figure}[t!]
\begin{center}
\input{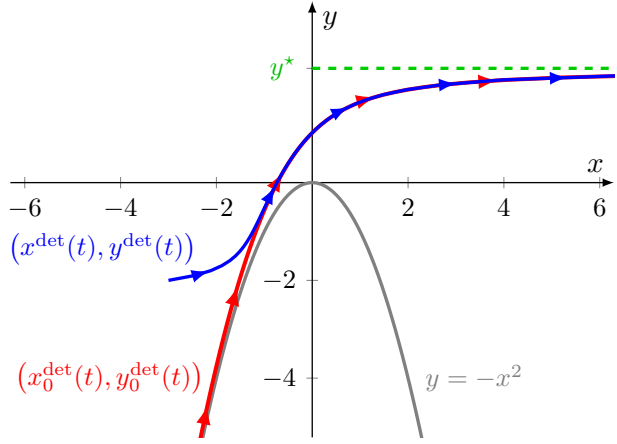}
\end{center}
\caption{Dynamic saddle-node bifurcation. 
An example of the general solution $\left(x^{\det}(t), y^{\det}(t)\right)$ of~\eqref{eq:sn_scaled} 
(see \eqref{eq:xdet}) for the initial condition $(\xin,\yin)=(-3,-2)$, the reference trajectory 
$\left(x_0^{\det}(t), y_0^{\det}(t)\right)$ (see \eqref{eq:x0}), the critical manifold $y=-x^2$, 
and the limit value $\ystar$ (see \eqref{eq:ystar}).}
\label{fig:dsnb} 
\end{figure}


\subsection{Main result}
\label{ssec:main} 

To formulate our main result, we fix an initial value $(\xin,\yin)$ with 
$\xin < 0$ and $\xin^2 + \yin > 0$, as well as a final value $\xf > 0$. 
Given the solution $(x_t,y_t)$ of~\eqref{eq:sn_scaled} with this initial 
condition, we denote by 
\begin{equation}
\label{eq:def_tau} 
 \tau = \inf\bigsetsuch{t > 0}{x_t = \xf}
\end{equation} 
the first-hitting time of $\xf$. We are interested in properties of the random 
variable $y_\tau$. We will also need the deterministic analogue of~\eqref{eq:def_tau}, 
given by 
\begin{equation}
\label{eq:def_T} 
 T(\xin,\yin;\xf) = \inf\bigsetsuch{t > 0}{x^{\det}(t) = \xf}\;,
\end{equation} 
where $(x^{\det}(t),y^{\det}(t))$ denotes the deterministic solution~\eqref{eq:xdet} 
of the system~\eqref{eq:sn_scaled} with the same initial condition $(\xin,\yin)$. 
We write 
\begin{equation}
 \yf = y^{\det}(T(\xin,\yin;\xf))
 = \yin + T(\xin,\yin;\xf)
\end{equation} 
for the final value of $y^{\det}$. 

\begin{theorem}
\label{thm:main}
Fix a constant $h_0 > 0$. 
Let $\Omega_0$ be the event 
\begin{equation}
\label{eq:def_Omega0} 
 \Omega_0 
 = \biggsetsuch{\omega}{\sup_{0\leqs t\leqs \tau} \bigabs{y_t(\omega) - y^{\det}(t)} \leqs h_0}\;.
\end{equation} 
There exists a constant $\kappa > 0$, depending only on $h_0$, such that 
\begin{equation}
\label{eq:bound_P_Omega0} 
 \fP(\Omega_0^c) \leqs \frac{\yf - \yin}{\sigma} \e^{-\kappa/\sigma}\;.
\end{equation} 
Furthermore, the expectation and variance of $y_\tau\indicator{\Omega_0}$ satisfy  
\begin{align}
\label{eq:bound_expec_main} 
 \bigexpecin{(\xin,\yin)}{y_\tau \indicator{\Omega_0}} 
 &= \yf
 + \frac12 \sigma^2 D(\xin,\yin;\xf) + \Order{\sigma^3}\;, \\
 \Varin{(\xin,\yin)}{y_\tau \indicator{\Omega_0}}
 &= \sigma^2 V(\xin,\yin;\xf) + \Order{\sigma^3}\;. 
\label{eq:bound_variance_main} 
\end{align} 
The functions $D$ and $V$ are given by 
\begin{align}
\label{eq:DV} 
 D(\xin,\yin;\xf)
 &= \int_{\yin}^{\yf} \partial_{yy} T(x^{\det}(y),y;\xf) \6y\;, \\
 V(\xin,\yin;\xf)
 &= \int_{\yin}^{\yf} \bigpar{1 + \partial_y T(x^{\det}(y),y;\xf)}^2 \6y\;, 
 \label{eq:VD}
\end{align}
where we write $\partial_y T$ and $\partial_{yy}T$ for derivatives of the deterministic 
time~\eqref{eq:def_T} with respect to its second argument $\yin$. 
\end{theorem}

The functions $D$ and $V$ can be made more explicit if we assume that the initial condition 
lies on the slow solution~\eqref{eq:x0}, that is, if 
\begin{equation}
\label{eq:xin_slow} 
 \xin = x^{\det}_0(0) = \frac{\Ai'(-\yin)}{\Ai(-\yin)}\;.
\end{equation} 
Note that the slow solution is attracting for $x<0$, so that small changes in $\xin$ will have 
almost no effect on the result.

\begin{proposition}
\label{prop:DV} 
If $\xin$ is given by~\eqref{eq:xin_slow}, then 
\begin{align}
 \lim_{\xf\to\infty} D(\xin,\yin;\xf)
 &= \frac34 + \frac{1}{\Ai'(-\ystar)^2}
 \Biggbrak{2\pi\Biggpar{\frac{\Bi'(-\ystar)}{\Ai'(-\ystar)}\cF(-\yin)-\cG(-\yin)}
 -f(-\yin)}\;,\\
 \lim_{\xf\to\infty} V(\xin,\yin;\xf)
 &= \frac12 \ystar + \frac{\cF(-\yin)}{\Ai'(-\ystar)^4}\;, 
 \label{eq:VDinf}
\end{align}
where 
\begin{align}
 f(z) ={}& \Ai'(z)^2 - z\Ai(z)^2\;, 
\label{eq:def_f} 
 \\
 \cF(z) ={}& \biggpar{\frac{z^3}{2} + \frac18} \Ai(z)^4 
 - \frac{z}{2} \Ai(z)^3 \Ai'(z) 
 - z^2 \Ai(z)^2\Ai'(z)^2 \\
\label{eq:def_F} 
 &{}+ \frac12 \Ai(z)\Ai'(z)^3
 + \frac{z}2 \Ai'(z)^4\;, \\
 \cG(z) ={}&
 \Biggbrak{\biggpar{\frac{z^3}{2} + \frac18}\Ai(z)^3-\frac{3z}8 \Ai(z)^2\Ai'(z)-\frac{z^2}2\Ai(z)\Ai'(z)^2+\frac18\Ai'(z)^3}\Bi(z)\\
\label{eq:def_G} 
&{}+\Biggbrak{-\frac{z}8 \Ai(z)^3-\frac{z^2}2 \Ai(z)^2\Ai'(z) 
+ \frac38\Ai(z)\Ai'(z)^2+\frac{z}2\Ai'(z)^3}\Bi'(z)\;.
\end{align}
In particular, the asymptotics of Airy functions (cf.~\eqref{eq:Airy_asymptotics}) imply 
\begin{align}
 \lim_{\xin\to-\infty} \lim_{\xf\to\infty} D(\xin,\yin;\xf)
 &= \frac34\;,\\
 \lim_{\xin\to-\infty} \lim_{\xf\to\infty} V(\xin,\yin;\xf)
 &= \frac12 \ystar = 1.169053705229883\dots\;. 
 \label{eq:DV_limit} 
\end{align}
Furthermore, 
\begin{equation}
 \lim_{\xf\to\infty} D(\xin,\yin;\xf) > 0
\end{equation} 
for any $\xin\in\R$ (or equivalently, for any $\yin < y^\star$). 
\end{proposition}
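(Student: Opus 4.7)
The plan is to work throughout on the slow solution $\xin = \Ai'(-\yin)/\Ai(-\yin)$, along which the integration constant $K$ in~\eqref{eq:xdet_y} vanishes. In the limit $\xf\to\infty$ the final coordinate $H(\xin,\yin) := \yin + T(\xin,\yin;\xf)$ is determined implicitly by the vanishing of the denominator in~\eqref{eq:xdet_y}, namely
\begin{equation*}
\Ai(-H) + K(\xin,\yin)\Bi(-H) = 0\;,\qquad
K(\xin,\yin) = \frac{\Ai'(-\yin) - \xin\Ai(-\yin)}{\xin\Bi(-\yin) - \Bi'(-\yin)}\;.
\end{equation*}
On the slow solution $K=0$ and $H=\ystar$, and the general $\xin\in\R$ case reduces to this one up to exponentially small corrections because the slow solution is attracting for $x<0$.

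Differentiating the implicit relation at fixed $\xin$, and using the Airy ODE $\Ai''(z) = z\Ai(z)$, the Wronskian $\Ai(z)\Bi'(z) - \Ai'(z)\Bi(z) = 1/\pi$, and the resulting identity $\Bi(-\ystar) = -1/(\pi\Ai'(-\ystar))$, a first differentiation gives the clean expression
\begin{equation*}
\partial_\yin H|_{\textup{slow}} = \frac{f(-\yin)}{\Ai'(-\ystar)^2}\;.
\end{equation*}
A second, routine but bookkeeping-heavy differentiation produces $\partial_{\yin\yin}H|_{\textup{slow}}$ as a linear combination of $\Ai(-\yin)^2$, $f(-\yin)g(-\yin)$ and $f(-\yin)^2$, with $g(z) := \Ai'(z)\Bi'(z) - z\Ai(z)\Bi(z)$.

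Substituting these into~\eqref{eq:VD} reduces $V$ and $D$ to one-dimensional integrals handled by the antiderivative identities $f'(z) = -\Ai(z)^2$, $\cF'(z) = f(z)^2$, and $\cG'(z) = f(z)g(z)$. The last two are verified by direct differentiation using $g'(z) = -\Ai(z)\Bi(z)$; the polynomial prefactors in~\eqref{eq:def_F}--\eqref{eq:def_G} are recovered most cleanly by the method of undetermined coefficients in the $\R[z]$-algebra generated by $\Ai, \Ai', \Bi, \Bi'$. At $z = -\ystar$, $\Ai(-\ystar) = 0$ combined with the Wronskian collapses the antiderivative values to $\cF(-\ystar) = -\tfrac{\ystar}{2}\Ai'(-\ystar)^4$ and $\cG(-\ystar) = -\Ai'(-\ystar)^2/(8\pi) - \tfrac{\ystar}{2}\Ai'(-\ystar)^3\Bi'(-\ystar)$; assembling the pieces yields the claimed closed forms for $\lim_{\xf\to\infty}V$ and $\lim_{\xf\to\infty}D$. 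The limits as $\xin\to-\infty$ follow immediately from~\eqref{eq:Airy_asymptotics}, which forces $f, \cF, \cG$ to decay exponentially as $z\to+\infty$.

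The hardest part is the strict positivity of $\lim_{\xf\to\infty}D(\xin,\yin;\xf)$ on $(-\infty,\ystar)$. Since $D(\ystar) = 0$, $\lim_{\yin\to-\infty}D(\yin) = 3/4$, and $\tfrac{\mathrm{d}}{\mathrm{d}\yin}D(\yin) = -\partial_{\yin\yin}H|_{\textup{slow}}$, it suffices to establish the pointwise bound
\begin{equation*}
\Ai(-\yin)^2 + 2\pi f(-\yin)\biggbrak{\frac{\Bi'(-\ystar)}{\Ai'(-\ystar)}f(-\yin) - g(-\yin)} > 0 \qquad \text{for all } \yin < \ystar\;.
\end{equation*}
The strict positivity $f(z) > 0$ on $\R$ (consequence of $f'(z) = -\Ai(z)^2 \leq 0$ and $f(z) \to 0^+$ as $z\to+\infty$) handles part of the left-hand side, but the sign of the bracket is the main obstacle. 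I would address this by studying the function $\Psi(\yin)$ equal to the left-hand side: it vanishes at both endpoints (direct evaluation at $\yin=\ystar$ and Airy decay at $-\infty$), and one rules out interior zeros by a differential inequality derived from $f' = -\Ai^2$, $g' = -\Ai\Bi$, together with sign information about $\Ai, \Ai', \Bi, \Bi'$ on $(-\ystar,\infty)$.
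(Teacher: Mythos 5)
Your proposal reaches the same formulas but by a genuinely different route for the key step of computing the limits of $\partial_y T$ and $\partial_{yy}T$ as $\xf\to\infty$. The paper (Propositions~\ref{prop:TxTxx} and~\ref{prop:asymptotics}) perturbs the $x$-equation around the reference solution, solves linear inhomogeneous ODEs for $x_1,x_2$ by variation of constants, extracts $T_1,T_2$ by expanding the condition $x(T)=\xf$ in $\eps$, and only then passes to the limit $\xf\to\infty$ using Airy asymptotics, carefully tracking the cancellation of the singular $\eta^{-1}$ term in $T_2$. Your implicit differentiation of the terminal condition $\Ai(-H)+K\Bi(-H)=0$ at the slow solution bypasses both the perturbative ODE solution and the finite-$\xf$ bookkeeping and lands directly on the limit expressions; I checked that your first-order claim $\partial_\yin H\vert_{\mathrm{slow}} = f(-\yin)/\Ai'(-\ystar)^2$ does follow cleanly from the Wronskian as you say, and the corresponding second-derivative computation produces the same linear combination of $\Ai^2$, $fg$ and $f^2$ as~\eqref{eq:Tyy_asympt}. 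The trade-off is that your route never sees the finite-$\xf$ expressions of Proposition~\ref{prop:TxTxx}, which the paper both states as part of its results and uses in the numerical comparisons of Section~\ref{ssec:numerics}. The step from the limit derivatives to the closed forms~\eqref{eq:VDinf}, via the antiderivatives $\cF$, $\cG$ with $\cF'=f^2$, $\cG'=fg$, and the evaluation at $z=-\ystar$ using $\Ai(-\ystar)=0$, is the same as in the paper. The incidental remark in your first paragraph about the ``general $\xin\in\R$ case'' reducing to the slow solution up to exponentially small corrections is unnecessary and slightly off the mark: the proposition's hypothesis already places $\xin$ on the slow solution, and the ``any $\xin\in\R$'' in the positivity claim is just the parametrisation of that curve by $\yin$.

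The one genuine gap is the positivity proof. You correctly reduce it to showing that the function $\Psi(\yin)$ (the paper's $F(-\yin)$) is strictly positive on $(-\infty,\ystar)$, vanishing at both endpoints, but ``one rules out interior zeros by a differential inequality'' does not describe what is actually required. The argument the paper needs is a three-level cascade: $F'(z)=4\pi\Ai(z)^2G(z)$ with $G(z)=g(z)-\tfrac{\Bi'(-\ystar)}{\Ai'(-\ystar)}f(z)$, then $G'(z)=\Ai(z)H(z)$ with $H(z)=\tfrac{\Bi'(-\ystar)}{\Ai'(-\ystar)}\Ai(z)-\Bi(z)$, and finally the strict negativity of $H'$ on $(-\ystar,\infty)$, which itself splits into two non-trivial sub-arguments: monotonicity of $k=\Ai'/\Bi'$ via $k'=z/(\pi\Bi'^2)$ on $(-\ystar,0)$, and an invariant-sector argument for the ODE $H''=zH$ on $[0,\infty)$. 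Nothing in your sketch signals awareness that a naive single differential inequality cannot work here, precisely because $F$ vanishes at both endpoints and is not monotone; the argument must show $F$ is unimodal, and that fact is only accessible two derivatives deep. As a plan this part is too high-level to count as a proof.
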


Figure~\ref{fig:funprop22} shows graphs of the explicit expressions for the limits of the 
functions $D$ and $V$ as $\xf \to \infty$, as given in equation~\eqref{eq:VDinf}. 
These limits tend to $\frac{3}{4}$ and $\frac12\, y^\star$, respectively, when 
$x_{\mathrm{in}} \to -\infty$, in agreement with equation~\eqref{eq:DV_limit}.

\begin{figure}[t!]
\begin{center}
\input{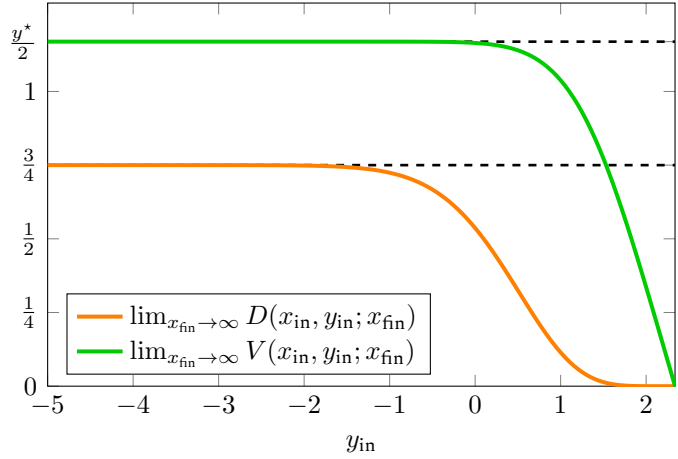}
\end{center}
\vspace{-2mm}
\caption{Plots of the explicit limit expressions of the functions $D$ and $V$ as 
$\xf \to \infty$, derived in equation~\eqref{eq:VDinf}, as a function of $\yin$.}
\label{fig:funprop22} 
\end{figure}

\begin{remark}
We also have explicit expressions for the functions $D$ and $V$ when $\xf$ is finite, 
but they are more complicated, see Proposition~\ref{prop:TxTxx}. 
Figure~\ref{fig:Tderivatives} below shows that the analytical expressions 
for $D$ derived from~\eqref{eq:TyTyy} perfectly matches numerical simulations 
of the integral.
\end{remark}


\subsection{Discussion}
\label{ssec:discussion} 

The most interesting feature of our results is that the expectation of $y_\tau$ 
increases with the noise intensity $\sigma$, since $D$ is positive. Stricly speaking, we 
have proved this only in the limit $\yf \to y^\star$, but numerical simulations indicate 
that $\partial_{yy}T$ is always positive. One caveat of our result is that the expansion 
in powers of $\sigma$ is only guaranteed to be accurate for small $\sigma$. In that case, 
the expected difference, of order $\sigma^2$, is much smaller than the standard deviation 
of $y_\tau$, which has order $\sigma$. Therefore, the probability of having 
$y_\tau > y^{\det}_\tau$ should be roughly $\frac12 + \Order{\sigma}$, which is a small 
effect. This is one of the reasons the analysis is quite technical.

However, in the next section we will show numerical simulations that indicate that the 
expansions~\eqref{eq:bound_expec_main} and~\eqref{eq:bound_variance_main} are actually 
surprisingly accurate, even for $\sigma = 1$. In that case, the expected difference and 
standard deviation are of comparable size, so that the effect of the noise is no longer 
small. 

\begin{remark}
The reason why the expectation of $y_\tau$ increases with the noise intensity 
$\sigma$ is rather subtle. It is the combination of two effects: (1) the fact that the 
expectation increases like the second derivative of the deterministic time 
$T(x^{\det}(y),y;\xf)$, and (2) the convexity of this time as a function of $y$.
Convexity of $T$ is essentially because as $y$ increases, $T$ decreases, while 
remaining positive. As for the expectation increasing like the second 
derivative of $T$, this is related to Jensen's inequality. We present here a simplification 
of the actual argument given in Section~\ref{ssec:expectation}. 
Let $(x_1 = x^{\det}(y_1), y_1)$ and $(x_2 = x^{\det}(y_2), y_2)$ be two 
nearby points on the deterministic reference solution, with $x_1 < x_2$.
We make the Ansatz 
\begin{equation}
 \expecin{x_i,y}{y_\tau} = \yf + \sigma^2 D_i(y)\;,
\end{equation} 
where we recall that $\yf = y + T(x_i,y;\xf)$ for any $y\leqs \yf$, since 
$\dot{y} = 1$ if $\sigma = 0$. 
Therefore, writing $y_2 + \sigma Z$ for the value of $y_t$ when $x_t$ reaches $x_2$,
the strong Markov property implies 
\begin{align}
\label{eq:E_convex} 
 \sigma^2 D_1(y_1)
 &= \bigexpecin{x_1,y_1}{\expecin{x_2,y_2+\sigma Z}{y_\tau}} - \yf \\
 &\simeq \bigexpecin{x_1,y_1}{y_2 + \sigma Z + T(x_2,y_2+\sigma Z;\xf)
 + \sigma^2 D_2(y_2)} - \yf \\
 &= \bigexpecin{x_1,y_1}{T(x_2,y_2+\sigma Z;\xf)} - T(x_2,y_2;\xf) 
 + \sigma \expecin{x_1,y_1}{Z} + \sigma^2 D_2(y_2)\;.
\end{align} 
By Jensen's inequality,
\begin{equation}
 \bigexpecin{x_1,y_1}{T(x_2,y_2+\sigma Z;\xf)} 
 \geqs T(x_2,y_2+\sigma \expecin{x_1,y_1}{Z};\xf)\;.
\end{equation} 
If $\gamma = y_2 - y_1$ is small, one can show that $Z$ has an expectation of 
order $\gamma^2$, but a variance close to $\gamma$. This already shows that if 
we neglect the expectation of $Z$, one has $D_1(y_1) \geqs D_2(y_2)$. 
A more precise computation (see Proposition~\ref{prop:induction_E} for details), 
based on a Taylor-expansion of $T$, shows that 
\begin{equation}
 \bigexpecin{x_1,y_1}{T(x_2,y_2+\sigma Z;\xf)} 
 - T(x_2,y_2;\xf)
 = \sigma^2 \expecin{x_1,y_1}{Z^2}\partial_{yy}T(x_2,y_2;\xf) 
 + \Order{\sigma^2\gamma^2}\;,
\end{equation} 
which implies 
\begin{equation}
 \sigma^2 D_1(y_1) \simeq \sigma^2 D_2(y_2) 
 + \sigma^2 \gamma \partial_{yy}T(x_2,y_2;\xf) + \Order{\sigma^2\gamma^2}\;.
\end{equation} 
Taking $\gamma = \Order{\sigma}$, we can neglect the remainder, and we 
obtain Relation~\eqref{eq:DV} for $D(\xin,\yin;\xf)$ in differential form. 
\end{remark}

One may wonder whether the presence of the indicator function $\indicator{\Omega_0}$ 
is necessary, or only a technical limitation. The answer is that the indicator is indeed 
needed for the mathematical analysis presented here, though it may have little importance
in practice. The main reason is that if a sample path exits the $h_0$-neighbourhood of 
the deterministic solution, it has a larger probability to reach the critical manifold 
$y = -x^2$. In that case, the sample path may move to the left for a while, before crossing 
the critical manifold again and moving to the right. This is a rare event, but it can 
potentially have a large effect on expectations, which we do not take into account
in our analysis. However, simulations shown in the next section indicate that our 
results remain valid without the indicator function. This is because even when sample paths 
move below the critical manifold, leading $x$ to decrease for a while, they are likely 
to behave in the same way as paths arriving from the far left once they have moved 
above the critical manifold again. Therefore, the extra detour made by sample paths 
has almost no influence on the final value of $y_\tau$, though it increases the time
spent near the fold point.

One question of interest is about the effect of undoing the scaling transforming 
the system~\eqref{eq:sn_not_scaled} into the system~\eqref{eq:sn_scaled}. The change 
in expectation now becomes of order $\eps^{4/3}\bar\sigma^2$, while the variance has 
order $\eps^2\bar\sigma^2$. 
One may also wonder whether our results are only valid for $\xin$ and $\xf$ of 
order $1$, or whether they can be extended to $\xin$ and $\xf$ of order $\eps^{-\beta}$
for some $\beta > 0$. Indeed, this would allow to describe the system for $(\bar x,\bar y)$ 
of order $\eps^\alpha$, where $\alpha = \frac13 - \beta$, which would be useful to 
describe the dynamics of a more general system away from the fold point (see also the 
discussion on the centre manifold reduction below).
While we do not provide here a rigorous proof of that fact, we expect this extension 
to be possible, since the quantities $D$ and $V$ vary little 
when $\abs{\xin}$ and $\xf$ are large, and the error terms in 
Proposition~\ref{prop:moments_ytau} actually decrease when $x^2$ increases. 
The main effect is that the prefactor in the probability~\eqref{eq:bound_P_Omega0} 
would scale like $(\ybarf - \ybarin)/(\eps\bar\sigma)$, which is negligible compared to the 
exponential term $\e^{-\kappa/\sigma} = \e^{-\kappa/(\eps^{1/3}\bar\sigma)}$.

For the SSO--NES system motivating this work, our results show that after the 
trajectories pass the left fold point of the critical manifold 
(see Figure~\ref{fig:intro1}), the slow variable -- associated with the amplitude of 
the SSO -- has an expectation larger than the corresponding deterministic solution, 
with the difference becoming larger as the noise intensity increases. This suggests 
that noise may promote unmitigated responses, increasing the probability that 
trajectories end on the right-hand attracting branch above the rightmost unstable 
equilibrium (Figure~\ref{fig:intro1}b). 
This hypothesis will require future verification through numerical and theoretical 
analyses. The numerical results in the following section show that we can be confident 
in the validity of our results even for large values of $\sigma$. However, the centre 
manifold theorem, which allows the reduction to the normal form of a dynamic 
saddle--node bifurcation, is valid only near the left fold point. For the deterministic 
system, this reduction remains relevant beyond the immediate vicinity of the fold, 
particularly concerning the trajectory’s arrival on the right-hand attracting branch of 
the critical manifold (see Figure 5 of \cite{BERGEOT2021116109}), which is crucial 
for determining if mitigation occurs. Whether this extends to the stochastic case 
will be explored in future work, taking into account the stochastic slow dynamics 
after the system reaches the right-hand attracting branch, which also appears to influence 
the system’s tipping, as suggested by results in~\cite{BERGEOT2023104351}.

In more detail, the centre manifold theorem applies at least in a neighbourhood 
of size $\eps^{1/3}$ in the unscaled $\bar x$ variable, and of size $\eps^{2/3}$ 
in the unscaled $\bar y$ variable. This is because higher-order corrections to the 
normal form become, in scaled $(x,y)$-variables, terms of order $\eps^{1/3}$ at most, 
and therefore act as small perturbations to the normal form analysed here. Once trajectories 
leave this neighbourhood, a possible approach is to show that sample paths of 
the stochastic system remain sufficiently close to their deterministic counterpart 
for the effect of the noise to be negligible, compared to its effect near the fold 
point. The reason we expect this to hold is that the duration of the fast phase 
is small compared to the time spent near the fold point (in the spirit 
of~\cite[Theorem~3.1.11]{BGbook}). To obtain optimal results, 
it may be necessary to extend the local analysis to a slightly larger neighbourhood 
of sizes $\eps^\alpha$ in $x$ and $\eps^{2\alpha}$ in $y$, with $\alpha$ slightly less than 
$\frac13$, which decreases the duration of the fast phase spent outside that neighbourhood,
at the price of increasing the size of the error terms due to higher-order corrections 
near the fold point. 


\subsection{Numerical simulations}
\label{ssec:numerics} 

\begin{figure}[t!]
\begin{center}
\input{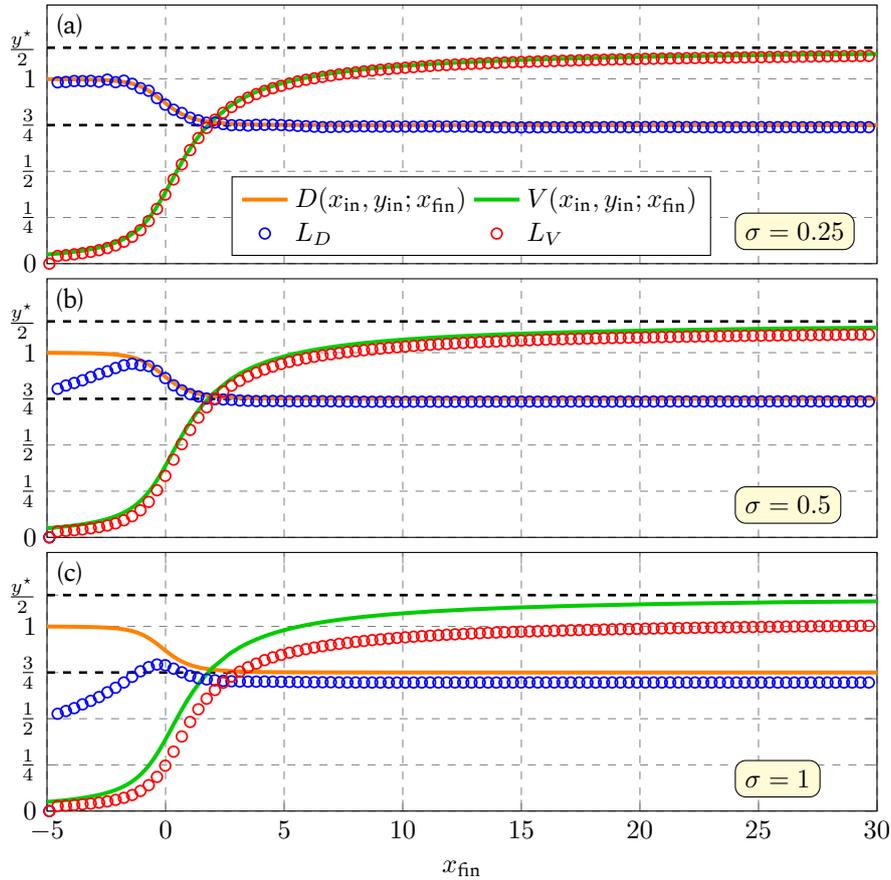}
\end{center}
\caption[]{Comparison between the results of Theorem~\ref{thm:main} and numerical simulations. 
Numerically estimated statistics 
$L_D$ and $L_V$ (see equation \eqref{eq:LDLV}) and their analytical counterparts, 
$D(\xin,\yin;\xf)$ and $V(\xin,\yin;\xf)$ (see equation \eqref{eq:VD}), are plotted as functions 
of $\xf$ for (a) $\sigma=0.25$, (b) $\sigma=0.5$ and (c) $\sigma=1$.}
\label{fig:numTh21} 
\end{figure}

\begin{figure}[h!]
\begin{center}
\begin{tikzpicture}[scale=1]

\begin{groupplot}[
    group style={
        group size=2 by 2,
        horizontal sep=1.75cm,
        vertical sep=1.2cm,
    },
    width=2.2in,
    height=1.8in,
    scale only axis,
]

\nextgroupplot[
    ylabel={
    {$M_D$}
    },
    legend style={
        at={(0.02,0.98)},
        anchor=north west,
        font=\small
    },
    legend cell align=left,
    xmin=0,
    xmax=1,
    xmajorgrids,
    grid style={dashed,gray,line width=0.25pt},
    ymin=0,
    ymax=1,
    ymajorgrids,
    xlabel={{$\sigma$}}
]
\addplot[
    only marks,
    mark=o,
    mark size=2.25pt,
    line width=0.75pt,
    color=blue
]  plot coordinates{
(0.05,0.0023137374399999997) (0.1,0.010282718666666666) (0.15,0.020933414666666667) (0.2,0.04103408533333333) (0.25,0.061565082666666666) (0.3,0.090252944) (0.35,0.12256681866666666) (0.4,0.15899754133333333) (0.45,0.201555552) (0.5,0.24528288) (0.55,0.29534426666666663) (0.6,0.35378087999999996) (0.65,0.41163290666666663) (0.7,0.4747929066666666) (0.75,0.5451552799999999) (0.8,0.6086004533333333) (0.85,0.6935270933333333) (0.9,0.7618593333333333) (0.95,0.8348974666666666) (1.,0.92792216)
};
\addlegendentry{\footnotesize Numerical}

\addplot[
    orange,
    line width=1.5pt,
    samples=100,     
    domain=0:1
]{x*x};
\addlegendentry{\footnotesize Analytical}

\draw(rel axis cs: 0.925,0.075) node{\small(a)};

\nextgroupplot[
    ylabel={
    {$M_V$}
    },
    legend style={
        at={(0.02,0.98)},
        anchor=north west,
        font=\small
    },
    legend cell align=left,
    xmin=0,
    xmax=1,
    xmajorgrids,
    grid style={dashed,gray,line width=0.25pt},
    ymin=0,
    ymax=1,
    ymajorgrids,
    xlabel={{$\sigma$}},
]
\addplot[
    only marks,
    mark=o,
    mark size=2.25pt,
    line width=0.75pt,
    color=red
] plot coordinates{
(0.05,0.002424128239167565) (0.1,0.00967109290978401) (0.15,0.021845279549872074) (0.2,0.038729478206749816) (0.25,0.06010860723030072) (0.3,0.08630292992264565) (0.35,0.11648668439135423) (0.4,0.15202072342946368) (0.45,0.19167720780807132) (0.5,0.2347617810606139) (0.55,0.28114352533506076) (0.6,0.3324758890498371) (0.65,0.3868886929392844) (0.7,0.4460484900379216) (0.75,0.5048360972208638) (0.8,0.5676426557820877) (0.85,0.6360720099176128) (0.9,0.7040676885104975) (0.95,0.7830352496973106) (1.,0.858013789696254)
};
\addlegendentry{\footnotesize Numerical}

\addplot[
    green!80!black,
    line width=1.5pt,
    samples=100,     
    domain=0:1
]{x*x};
\addlegendentry{\footnotesize Analytical}

\draw(rel axis cs: 0.925,0.075) node{\small(b)};

\nextgroupplot[
    ylabel={
    {$\log_{10}(M_D)$}
    },
    legend style={
        at={(0.02,0.98)},
        anchor=north west,
        font=\small
    },
    legend cell align=left ,
    xmin=-1.5,
    xmax=0.1,
    xmajorgrids,
    grid style={dashed,gray,line width=0.25pt},
    ymin=-3.1,
    ymax=0.25,
    ymajorgrids,
    xlabel={{$\log_{10}(\sigma)$}},
]
\addplot[
    only marks,
    mark=o,
    mark size=2.25pt,
    line width=0.75pt,
    color=blue
] plot coordinates{
(-1.3010299956639813,-2.635685925777018) (-1.,-1.9878920462518572) (-0.8239087409443188,-1.6791599235995407) (-0.6989700043360187,-1.3868552427357597) (-0.6020599913279624,-1.2106655330418843) (-0.5228787452803376,-1.0445386227508198) (-0.4559319556497244,-0.9116270862645587) (-0.3979400086720376,-0.7986095913627058) (-0.3467874862246563,-0.6956052343764522) (-0.3010299956639812,-0.6103327631715132) (-0.2596373105057561,-0.5296714554112366) (-0.2218487496163564,-0.4512656421297325) (-0.18708664335714442,-0.3854899142485148) (-0.1549019599857432,-0.32349577796204765) (-0.12493873660829995,-0.2634797772794113) (-0.09691001300805639,-0.2156677284811514) (-0.07058107428570728,-0.15893656810861237) (-0.045757490560675115,-0.11812520765849764) (-0.022276394711152253,-0.0783668567260966) (0.,-0.032488453630643205)
};
\addlegendentry{\footnotesize Numerical}

\addplot[
    orange,
    line width=1.5pt,
    samples=100,     
    domain=-1.5:0.1
]{2*x};
\addlegendentry{\footnotesize Analytical}

\draw(rel axis cs: 0.925,0.075) node{\small(c)};

\nextgroupplot[
    ylabel={
    {$\log_{10}(M_V)$}
    },
    legend style={
        at={(0.98,0.02)},
        at={(0.02,0.98)},
        anchor=north west,
        font=\small
    },
    legend cell align=left,
        xmin=-1.5,
    xmax=0.1,
    xmajorgrids,
    grid style={dashed,gray,line width=0.25pt},
    ymin=-3.1,
    ymax=0.25,
    ymajorgrids,
    xlabel={{$\log_{10}(\sigma)$}},
]
\addplot[
    only marks,
    mark=o,
    mark size=2.25pt,
    line width=0.75pt,
    color=red
] plot coordinates{
(-1.3010299956639813,-2.615444409221228) (-1.,-2.0145244444414963) (-0.8239087409443188,-1.6606423933352759) (-0.6989700043360187,-1.411958354123026) (-0.6020599913279624,-1.2210633349028293) (-0.5228787452803376,-1.0639744600479677) (-0.4559319556497244,-0.9337237160617152) (-0.3979400086720376,-0.818097205097395) (-0.3467874862246563,-0.717429525680491) (-0.3010299956639812,-0.6293726042962117) (-0.2596373105057561,-0.5510719137325041) (-0.2218487496163564,-0.4782398439918073) (-0.18708664335714442,-0.4124139626174271) (-0.1549019599857432,-0.35061792646455525) (-0.12493873660829995,-0.2968495993595025) (-0.09691001300805639,-0.24592497671927985) (-0.07058107428570728,-0.19649371494977375) (-0.045757490560675115,-0.15238558612285646) (-0.022276394711152253,-0.10621868697851818) (0.,-0.06650573226683638)
};
\addlegendentry{\footnotesize Numerical}

\addplot[
    green!80!black,
    line width=1.5pt,
    samples=100,     
    domain=-1.5:0.1
]{2*x};
\addlegendentry{\footnotesize Analytical}

\draw(rel axis cs: 0.925,0.075) node{\small(d)};

\end{groupplot}

\end{tikzpicture}
\end{center}
\caption[]{Comparison between Proposition~\ref{prop:DV} and numerical simulations.  
(a,b) Numerical estimates of $M_D$ and $M_V$ (see equation~\eqref{eq:MDMV}) compared to 
their analytical counterpart $\sigma^2$ as functions of $\sigma$ for $\xf = 30$.  
(c,d) $\log_{10}(M_D)$ and $\log_{10}(M_V)$ plotted against $\log_{10}(\sigma)$, 
which theoretically yields a straight line with slope $2$.

}
\label{fig:numProp22} 
\end{figure}
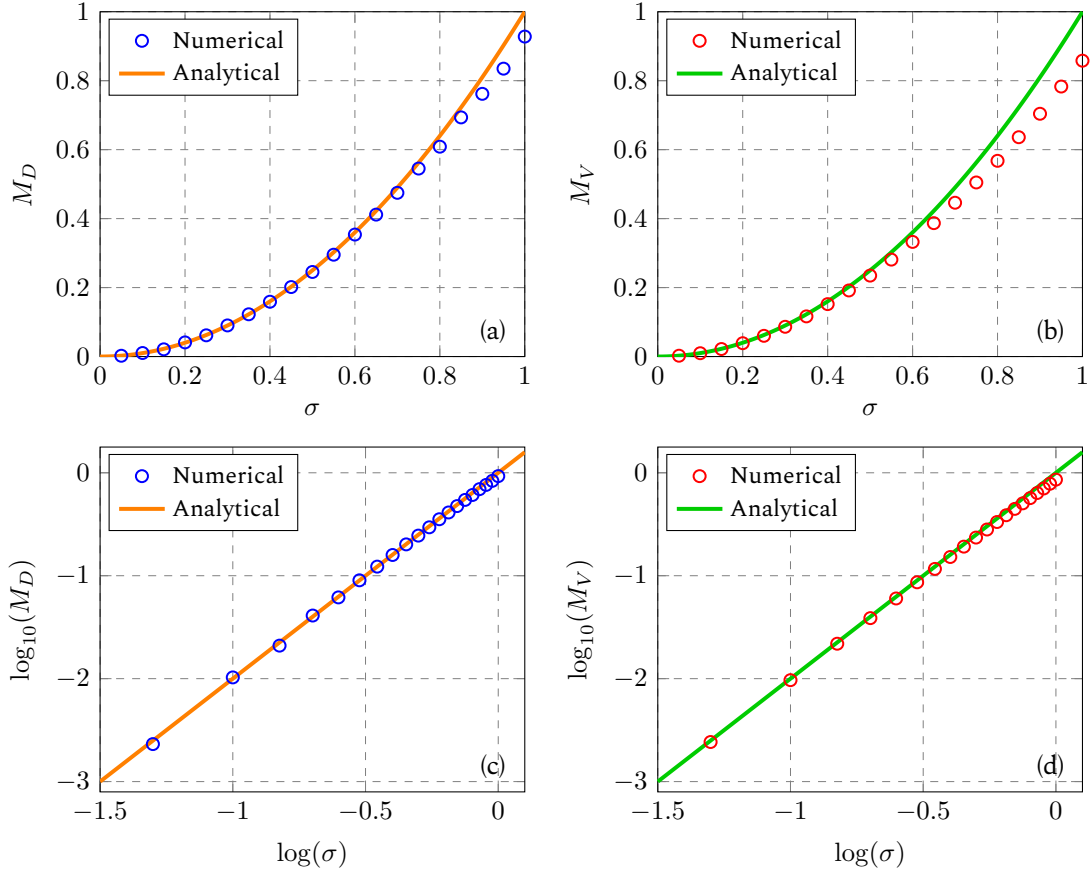

In this section the results stated in Theorem~\ref{thm:main} and 
Proposition~\ref{prop:DV} are compared with numerical simulations. 
For this purpose, the statistics $\bigexpecin{(\xin,\yin)}{y_\tau}$ 
and $\Varin{(\xin,\yin)}{y_\tau}$ are numerically estimated 
via a Monte Carlo approach for 20 noise levels $\sigma$, ranging from $0.05$ to $1$, 
with $3\cdot 10^{5}$ realizations of system~\eqref{eq:sn_scaled} generated for each value. 
Note that the indicator function $\indicator{\Omega_0}$ is not used here,
since the expectation and variance are computed numerically over all generated samples. 
This should not make any difference for small $\sigma$, since $\fP(\Omega_0^c)$ is 
exponentially small, but may have an effect for larger noise levels. 
The following initial conditions are used: $\xin=-5$ and $\yin=-\xin^2+0.1$ (i.e. near  
the critical manifold $y=-x^2$). The simulation of  the process~\eqref{eq:sn_scaled} is 
performed using an Euler-Maruyama scheme (see e.g. \cite{kloedenPlaten1992}) with an
integration time step equal to $10^{-5}$. The required standard normal random variables 
are generated using the Box–Muller algorithm.

In Figure~\ref{fig:numTh21}, the integral forms \eqref{eq:VD} of the functions 
$D(\xin,\yin;\xf)$ and $V(\xin,\yin;\xf)$ are plotted with respect to $\xf$ and 
compared with their numerically estimated counterparts 
\begin{equation}
L_D= \frac{2}{\sigma^2}\left[\bigexpecin{(\xin,\yin)}{y_\tau}-\yf\right]
 \quad\text{and}\quad
L_V= \frac{1}{\sigma^2} \Varin{(\xin,\yin)}{y_\tau}\;,
\label{eq:LDLV}
\end{equation} 
respectively.
Only the results obtained for $\sigma=0.25$ (Figure~\ref{fig:numTh21}(a)), 
$\sigma=0.5$ (Figure~\ref{fig:numTh21}(b)) and $\sigma=1$ (Figure~\ref{fig:numTh21}(c)) 
are shown here. 

Figure~\ref{fig:numProp22} displays, as functions of $\sigma$ for a fixed value $\xf=30$, 
the quantities
\begin{equation}
M_D=\frac{3}{8} \left[\bigexpecin{(\xin,\yin)}{y_\tau}-\yf\right]
\quad\text{and}\quad
M_V=\frac{2}{y^{\star}}\Varin{(\xin,\yin)}{y_\tau}\;,
\label{eq:MDMV}
\end{equation}
which, according to \eqref{eq:DV_limit}, are theoretically equal to $\sigma^2 
+ \Order{\sigma^3}$. 
The goal is to investigate the range of validity of the explicit asymptotic expressions 
for $D(\xin,\yin;\xf)$ and $V(\xin,\yin;\xf)$ given in equation~\eqref{eq:DV_limit}. 
The statistics \eqref{eq:MDMV} are first directly plotted as functions of $\sigma$ in 
Figures~\ref{fig:numProp22}(a,b) and compared to the theoretical parabola $\sigma^2$. 
Moreover, the root mean square error (RMSE) between theory and data is computed for 
both $M_D$ and $M_V$, we obtain $0.0270897$ and $0.0576924$, respectively. Finally, 
$\log_{10}(M_D)$ and $\log_{10}(M_V)$ are plotted in Figures~\ref{fig:numProp22}(c,d) 
with respect to $\log_{10}(\sigma)$ which theoretically correspond to a line with slope 
equal to 2. Again, RSME are computed between theory and data leading to $0.0185379$ and 
$0.0380149$ for $\log_{10}(M_D)$ and $\log_{10}(M_V)$, respectively.

These comparisons show excellent agreement between the analytical and numerical estimates, 
even for values of $\sigma$ that appear to lie beyond the validity range of the 
small-$\sigma$ expansions used in this study.
It should, however, be noted (see Figure~\ref{fig:numTh21}) that the discrepancy between 
the numerical and analytical results, as $\sigma$ increases, is greater when $\xf < 0$.
As it only occurs for rather large $\sigma$, however, this does not contradict our 
asymptotic results. Also, it does not correspond to the most relevant scenarios, particularly 
in the context of our SSO--NES application. The definition~\eqref{eq:LDLV} of $L_D$ 
shows that the difference between the theoretical result and the simulation depends on 
terms of order $\sigma^3$ and higher in the expansion of the expectation. Thus, in a 
sense, the remarkable fact is not that there is a discrepancy, but rather that it 
occurs only for negative values of $\xf$. A possible explanation of this observation 
is that higher-order terms in the $\sigma$-expansion of the expectation involve 
derivatives of order $3$ and higher of the deterministic transition time 
$T(\xin,\yin;\xf)$. In particular, it seems likely that the term of order 
$\sigma^3$ of the expectation will involve the integral
\begin{equation}
\label{eq:int_Tyyy} 
 \int_{\yin}^{\yf} \partial_{yyy} T(x^{\det}(y),y;\xf) \6y\;.
\end{equation} 
Figure~\ref{fig:Tderivatives} shows this quantity as a function of $\xf$, as well as  
the integral of the second derivative defining $D(\xin,\yin;\xf)$, see equation~\eqref{eq:DV}. 
Quantities depicted with solid lines are entirely computed numerically: first the 
deterministic time $T(\xin,\yin;\xf)$, then its successive derivatives with respect 
to its second argument, using finite differences, and finally the integrals from $\yin$ 
to $\yf$.While~\eqref{eq:int_Tyyy} dominates $D(\xin,\yin;\xf)$ for negative $\xf$ close 
to $\xin$, it becomes much smaller than $D(\xin,\yin;\xf)$ for positive $\xf$. This 
could explain why the second-order approximation of the expectation in $\sigma$ 
is more accurate for large positve $\xf$ than for small negative $\xf$. The figure 
also provides further validation of the analytical expression of the second derivative 
(see equation~\eqref{eq:TyTyy}), as its integral closely coincides with the equivalent 
quantity computed entirely numerically.

\begin{figure}[t]
\begin{center}
\input{TIKZ/T_deriv_integ.tex}
\end{center}
\caption[]{Integral of the second (orange) and third (green) derivatives of the 
deterministic time $T(\xin,\yin;\xf)$ with respect to its second argument. Results 
are obtained from numerical simulations of~\eqref{eq:sn_scaled} with $\sigma = 0$, 
and compared with the corresponding analytical result $(D(\xin,\yin;\xf)$~\eqref{eq:DV} 
(derived from~\eqref{eq:TyTyy}; dashed black curve shown only for the second derivative), 
plotted as a function of $\xf$ for $\xin = -2$.}
\label{fig:Tderivatives} 
\end{figure}


\section{Structure of the proofs}
\label{sec:structure} 


To prove Theorem~\ref{thm:main}, we start by fixing an initial condition 
$(\xin,\yin)$ satisfying $\xin^2 + \yin > 0$, and a final value 
$\xf > \xin$. 
The main idea is to introduce a partition 
\begin{equation}
 \xin = x_0 < x_1 < x_2 < \dots < x_{N-1} < x_N = \xf 
\end{equation} 
of the interval $[\xin,\xf]$, and to study the equation~\eqref{eq:sn_scaled} 
separately on each interval. The partition could be chosen equally spaced, 
but the proof simplifies slightly if it is chosen \lq\lq equally spaced in $y$\rq\rq, 
meaning that the deterministic solution starting in $(\xin,\yin)$ reaches $x = x_n$ 
when $y(t) = y^{\det}_n = \yin + n\gamma$, where $\gamma = (\yf - \yin)/N$
(see Figure~\ref{fig:partition} below). This means that 
the deterministic reference solution always takes the same time $\gamma$ to go from one 
section to the next one. 
Let 
\begin{equation}
 \tau_n = \inf\setsuch{t>0}{x_t = x_n}
\end{equation} 
be the first-hitting time of the line $\set{x = x_n}$. By the strong Markov 
property, to obtain the distribution of $y_\tau = y_{\tau_N}$, it is sufficient 
to know, for each $n\in\set{0,1,\dots,N-1}$, the distribution of 
$y_{\tau_{n+1}}$, conditionned on the value of $y_{\tau_n}$. 

\begin{itemize}
\item   The analysis of a single \lq\lq slice\rq\rq\ $[x_n, x_{n+1}]$ is performed in 
detail in Section~\ref{sec:one_slice}. The main result is Proposition~\ref{prop:moments_ytau}, 
which provides moment estimates related to $y_{\tau_{n+1}}$, given the value of 
$y_{\tau_n}$. To obtain these estimates, we first analyse the distribution of the 
time required to reach the section $\set{x = x_{n+1}}$, and then use this information 
to describe the exit location. The main idea in order to describe the transition 
time is to consider first a linearised process, which reduces, after a time change, 
to integrated Brownian motion (Section~\ref{ssec:scaling}). For this Gaussian process, 
we can draw on results in~\cite{Durbin85} to describe the density of transition times, using 
a fixed-point argument (Section~\ref{ssec:linearised}). Moments of the transition time can then 
be computed via the Laplace method (Section~\ref{ssec:linearised_moments}). It remains 
to transfer these results to the nonlinear equation by a comparison argument (Section~\ref{ssec:nonlinear}), and to deduce the moments of $y_\tau$, using in 
particular martingale arguments (Section~\ref{ssec:slice_moments}).

\item   Section~\ref{sec:slices} provides the proof of Theorem~\ref{thm:main} by 
combining the information obtained on each separate slice. We start by 
precisely defining the partition, as well as a likely event on which the 
analysis takes place (Section~\ref{ssec:partition}). We then obtain a 
recursive relation between moments, that is solved by going backwards in time, 
starting with the interval $[x_{N-1},x_N]$ and going all the way to $\set{x = x_0}$.
The moments can be written as sums over $n$, that are then approximated 
by integrals. Section~\ref{ssec:expectation} contains the analysis of the expectation, 
while Section~\ref{ssec:variance} is devoted to the variance.

\item   Finally, Section~\ref{sec:DV} contains an analysis of the functions $D$ and 
$V$ that appear in Theorem~\ref{thm:main}, and provides the proof of 
Proposition~\ref{prop:DV}. This relies on properties of Airy's equation and 
Airy functions, that are used to derive explicit expressions for $\partial_y T$
and $\partial_{yy}T$ using standard perturbation theory (Section~\ref{ssec:T1T2}).
Section~\ref{ssec:asympt} contains the analysis of their asymptotic expressions in 
the limit $\xf\to\infty$.
\end{itemize}


\section{Analysis of one slice}
\label{sec:one_slice} 

In this section, we analyse the SDE~\eqref{eq:sn_scaled} on one \lq\lq slice\rq\rq\ 
of width $\delta > 0$. That is, we fix an initial condition $(x_0,y_0)$ 
satisfying 
\begin{equation}
x_0^2 + y_0 > 0\;,
\end{equation} 
and define the stopping time 
\begin{equation}
\label{eq:tau_slice} 
\tau = \inf\bigsetsuch{t>0}{x_t = x_0 + \delta}\;.
\end{equation} 
We will always assume that $x_0^2 + y_0$ is bounded below by a positive constant 
independent of $\delta$. 
We consider $\delta$ as a small parameter, but such that $\sigma^2 \ll \delta$.  

The aim is to characterise the distribution of $y_\tau$, in particular its expectation 
and its variance. We divide this task into several steps:

\begin{itemize}
\item   In Section~\ref{ssec:scaling}, we perform a time change and a linearisation around 
the deterministic solution with the same initial condition, 
in order to reduce the problem to a standardised first-hitting problem, namely of the first 
time integrated Brownian motion hits a curve that crosses the $x$-axis with slope $-1$. 

\item   In Section~\ref{ssec:linearised}, we consider the linearised equation. We determine 
a sharp approximation of the density of the first-hitting time for this linearised process, 
based on results by Durbin on first-hitting densities of Gaussian processes to general, 
curved boundaries~\cite{Durbin85}. 

\item   In Section~\ref{ssec:linearised_moments}, we use the results of the 
previous section to determine the expectation and variance of 
the first-hitting time of the linearised process.

\item   In Section~\ref{ssec:nonlinear}, we quantify the error made by linearising the 
equation. 

\item   Finally, in Section~\ref{ssec:slice_moments}, we derive  properties of the 
distribution of $y_\tau$.
\end{itemize}


\subsection{Time change}
\label{ssec:scaling} 

Let 
\begin{equation}
u_t = x_t - x^{\det}(t)
\end{equation} 
denote the difference between the $x$-components of the solutions of the 
SDE~\eqref{eq:sn_scaled} and its deterministic counterpart, for the same 
initial condition $(x_0, y_0)$. It satisfies the equation 
\begin{equation}
\6u_t = (\sigma W_t + 2x^{\det}(t) u_t + u_t^2) \6t\;,
\end{equation} 
with initial condition $u_0 = 0$. The stopping time~\eqref{eq:tau_slice} 
can be rewritten 
\begin{equation}
\tau = \inf\bigsetsuch{t>0}{u_t = d(t)}\;,
\end{equation} 
where
\begin{equation}
d(t) = \delta + x_0 - x^{\det}(t)\;.
\end{equation} 
In the deterministic case $\sigma = 0$, $\tau$ is equal to the deterministic value 
\begin{align}
T(x_0, y_0)
= T(x_0, y_0; x_0 + \delta) 
&= \inf\bigsetsuch{t>0}{x^{\det}(t) = x_0 + \delta} \\
&= \inf\bigsetsuch{t>0}{d(t) = 0}\;.
\end{align}
Since
\begin{equation}
x^{\det}(t) = x_0 + (x_0^2 + y_0) t + \Order{t^2}\;,
\end{equation} 
one has 
\begin{equation}
\label{eq:d_of_t} 
d(t) = \delta - (x_0^2 + y_0) t + \Order{t^2}\;,
\end{equation} 
and therefore 
\begin{equation}
\label{eq:T_delta} 
T(x_0,y_0) = \frac{\delta}{x_0^2 + y_0} + \Order{\delta^2}\;.
\end{equation} 
We focus first on the linearised equation 
\begin{equation}
\6u^0_t = (\sigma W_t + 2x^{\det}(t) u^0_t)\6t\;,
\end{equation} 
again with initial condition $u^0_0 = 0$. 
Its solution can be written 
\begin{equation}
u^0_t = \sigma \e^{2\alpha(t)} \int_0^t \e^{-2\alpha(s)}W_s\6s\;,
\end{equation} 
where
\begin{equation}
\label{eq:alpha} 
\alpha(t) = \int_0^t x^{\det}(s) \6s 
= x_0t + \Order{t^2}\;.
\end{equation} 
Let 
\begin{align}
\tau^0 &= \inf\bigsetsuch{t>0}{u^0_t = d(t)} \\
&= \inf\Biggsetsuch{t>0}{\sigma \int_0^t \e^{-2\alpha(s)}W_s\6s 
= \e^{-2\alpha(t)}d(t)}\;. 
\end{align} 
The following result shows that up to a time change, we can always 
assume that $d(t)$ vanishes in $t = \delta$, where its slope is close to $-1$, 
and that $u^0_t$ is proportional to integrated Brownian motion. 

\begin{proposition}
\label{prop:time_change}
For sufficiently small $\delta$, there exists an invertible time change 
\begin{equation}
t = c g(\tilde t)
= c\bigbrak{\tilde t + \Order{\tilde t^2}}\;,
\end{equation} 
with $c>0$ and $g$ a strictly increasing function, close to identity, such that 
\begin{equation}
\label{eq:tau0_tildetau0} 
\tau^0 = cg(\tilde\tau^0)\;,
\end{equation} 
where
\begin{equation}
\tilde\tau^0 
= \inf\bigsetsuch{\tilde t>0}{\tilde\sigma X_{\tilde t} = \tilde d(\tilde t)}\;.
\label{eq:tilde_tau0} 
\end{equation} 
Here 
\begin{equation}
\label{eq:def_sigma_tilde} 
\tilde\sigma = \sigma c^{3/2}(1 + \Order{\delta})\;,
\end{equation} 
$X_{\tilde t}$ is the integrated Brownian motion 
\begin{equation}
X_{\tilde t} = \int_0^{\tilde t} W_s \6s\;,
\end{equation} 
and $\tilde d$ is a strictly decreasing function satisfying 
\begin{equation}
\tilde d(\delta) = 0\;, \qquad 
\tilde d'(\delta) = -1\;.
\label{eq:dtilde} 
\end{equation} 
Furthermore, $\tilde d'(\tilde t) = -1 + \Order{\delta}$ whenever $\tilde t = \Order{\delta}$.
\end{proposition}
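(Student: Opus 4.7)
The plan is to perform the time change in two stages: a linear rescaling by the characteristic time $c$, followed by a nonlinear refinement $g$ that enforces the normalizations~\eqref{eq:dtilde}, with all residual $1 + \Order{\delta}$ factors absorbed into $\tilde\sigma$.

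First, I would set $c := 1/(x_0^2 + y_0)$, motivated by $T(x_0, y_0) = c\delta + \Order{\delta^2}$ from~\eqref{eq:T_delta}. Under the substitution $t = c\tau$, using Brownian scaling to introduce a new standard Brownian motion $\tilde W_\tau := c^{-1/2} W_{c\tau}$, the explicit representation of $u^0_t$ becomes
\[
u^0_{c\tau} = \sigma c^{3/2} \int_0^\tau \e^{2[\alpha(c\tau) - \alpha(cr)]}\, \tilde W_r \6r\;.
\]
On the window $\tau = \Order{\delta}$ relevant to the crossing, the estimate $\alpha(cs) = x_0 cs + \Order{(cs)^2} = \Order{\delta}$ from~\eqref{eq:alpha} shows that the Volterra kernel equals $1 + \Order{\delta}$; thus $u^0_{c\tau}$ coincides, to leading order, with $\sigma c^{3/2}$ times the integrated Brownian motion $\int_0^\tau \tilde W_r \6r$, modulo a multiplicative factor $1 + \Order{\delta}$.

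Second, I would compose with a smooth, strictly increasing $g$ close to the identity, and set $\tilde d(\tilde t) := d(cg(\tilde t))$. The normalization $\tilde d(\delta) = 0$ fixes $cg(\delta) = T(x_0, y_0)$, hence $g(\delta) = \delta + \Order{\delta^2}$; the normalization $\tilde d'(\delta) = -1$ fixes $g'(\delta) = 1 + \Order{\delta}$ via the chain rule together with $d'(T(x_0,y_0)) = -(x_0^2+y_0) + \Order{\delta}$. A smooth (e.g.\ quadratic) interpolation extends $g$ over the interval so as to preserve $g(\tilde t) = \tilde t + \Order{\tilde t^2}$ and invertibility for $\delta$ small enough. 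Setting $\tilde\sigma := \sigma c^{3/2}(1 + \Order{\delta})$ to absorb the residual exponential factor at the crossing, the stopping condition $u^0_{cg(\tilde t)} = d(cg(\tilde t))$ rewrites as $\tilde\sigma X_{\tilde t} = \tilde d(\tilde t)$, yielding $\tau^0 = cg(\tilde\tau^0)$. The bound $\tilde d'(\tilde t) = -1 + \Order{\delta}$ for $\tilde t = \Order{\delta}$ follows by Taylor-expanding $d$ around $t = 0$ and using $g' = 1 + \Order{\delta}$, and strict monotonicity of $\tilde d$ inherits from that of $d$ on the relevant interval.

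The main obstacle is this identification step: the Volterra kernel $\e^{2[\alpha(c\tau) - \alpha(cr)]}$ depends on both endpoints, so it cannot be eliminated by any single time change of the Brownian motion, and exact pathwise identification with integrated Brownian motion is impossible unless $\alpha \equiv 0$. The proposition must therefore be read as an approximate equivalence, whose $\Order{\delta}$ relative error is absorbed precisely by the multiplicative factor in~\eqref{eq:def_sigma_tilde} and by the tolerance in the slope of $\tilde d$.
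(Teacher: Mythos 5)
Your proposal stalls exactly where the paper's proof does its real work, and your concluding paragraph in fact asserts that the statement cannot hold as written (``exact pathwise identification with integrated Brownian motion is impossible unless $\alpha\equiv 0$''), which is wrong. The key observation you are missing is that the kernel is \emph{separable}: $\e^{2[\alpha(t)-\alpha(s)]}=\e^{2\alpha(t)}\,\e^{-2\alpha(s)}$. The paper therefore first divides the hitting condition $u^0_t=d(t)$ by $\e^{2\alpha(t)}$, so that $\tau^0=\inf\{t>0:\sigma\int_0^t\e^{-2\alpha(s)}W_s\,\6s=\e^{-2\alpha(t)}d(t)\}$; the upper-endpoint dependence is moved entirely into the boundary function, and the remaining kernel depends only on the integration variable. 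Then, after the substitution $s=cg(\tilde s)$ and Brownian scaling $W_{cg(\tilde s)}=\sqrt{cg(\tilde s)/\tilde s}\,W_{\tilde s}$ (in distribution), the function $g$ is \emph{defined} as the solution of the ODE $g'(\tilde s)=\sqrt{\tilde s/g(\tilde s)}\,\e^{2\alpha(cg(\tilde s))}$, $g(0)=0$, which exists, is unique and strictly increasing by separation of variables, and satisfies $g(\tilde s)=\tilde s+\Order{\tilde s^2}$. With this choice the integrand becomes exactly $W_{\tilde s}$, so $\tilde X_{\tilde t}=c^{3/2}X_{\tilde t}$ holds \emph{exactly} (in distribution), not up to $\Order{\delta}$. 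Finally $c=T(x_0,y_0)/g(\delta)$ forces $\tilde d(\delta)=0$, a direct computation gives $\tilde d'(\delta)=-1+\Order{\delta}$ (and the same for all $\tilde t=\Order{\delta}$), and the exact normalisation $\tilde d'(\delta)=-1$ is obtained by rescaling $\tilde d$ by a factor $1+\Order{\delta}$, which is what produces the $1+\Order{\delta}$ in $\tilde\sigma=\sigma c^{3/2}(1+\Order{\delta})$. That is the \emph{only} place where an $\Order{\delta}$ slack enters; the identity $\tau^0=cg(\tilde\tau^0)$ is exact.

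Your fallback of reading the proposition as an ``approximate equivalence'' is not a repair, for two reasons. First, the error you want to absorb into $\tilde\sigma$ is the factor $\e^{2[\alpha(c\tau)-\alpha(cr)]}=1+\Order{\delta}$ \emph{inside} the integral, which depends on both $\tau$ and $r$; a $(\tau,r)$-dependent multiplicative perturbation of the integrand cannot be traded for a constant rescaling of the noise intensity, so the hitting time of the perturbed process is not the hitting time of $\tilde\sigma X$ against any boundary of the required form without a further quantitative comparison that you do not provide. Second, the downstream analysis (Durbin's first-passage density in Section~4.2 and the fixed-point argument) uses the exact covariance structure of integrated Brownian motion; an unquantified approximate identification of $\tau^0$ would not support those computations. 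Your choice $c=1/(x_0^2+y_0)$ and the chain-rule verification of the slope conditions are fine as far as they go, but the proof needs the separability trick plus the ODE definition of $g$ to turn the statement into the exact identity it claims.
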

\begin{proof}
Given a constant $c > 0$, that will be chosen below, let $g:\R_+\to\R_+$ 
be the solution of the ODE 
\begin{equation}
\label{eq:ODE_g} 
g'(\tilde s) 
= \sqrt{\frac{\tilde s}{g(\tilde s)}} \e^{2\alpha(cg(\tilde s)))}
\end{equation} 
with initial condition $g(0) = 0$. Separating variables in~\eqref{eq:ODE_g}, we get
\begin{equation}
\int_0^{g(\tilde s)} \sqrt{g} \e^{-2\alpha(cg)} \6g
= \int_0^{\tilde s} \sqrt{\tilde s} \6\tilde s
= \frac23 \tilde s^{3/2}\;.
\end{equation} 
Since the left-hand side is strictly increasing in $g(\tilde s)$, this equation 
has a unique solution for any $\tilde s \geqs 0$, which is strictly increasing 
in $\tilde s$. We will later use the fact that~\eqref{eq:alpha} and a Taylor 
expansion imply 
\begin{equation}
\label{eq:g}
g(\tilde s) = \tilde s + \frac45 x_0 c\tilde s^2 + \Order{\tilde s^3}\;.
\end{equation} 
Since $g$ in strictly increasing, we can set $\tau^0 = cg(\tilde\tau^0)$, where 
\begin{equation}
\tilde\tau^0 
= \inf\bigsetsuch{\tilde t>0}{\sigma \tilde X_{\tilde t} = \tilde d(\tilde t)}\;,
\label{eq:tilde_tau0_2} 
\end{equation} 
with 
\begin{align}
\widetilde X_{\tilde t} &= \int_0^{cg(\tilde t)} \e^{-2\alpha(s)} W_s \6s\;, \\
\tilde d(\tilde t) &= \e^{-2\alpha(cg(\tilde t))} d(cg(\tilde t))\;.
\end{align}
We now perform the change of variables $s = cg(\tilde s)$ in the integral 
defining $\widetilde X_{\tilde t}$. We have $\6s = cg'(\tilde s)\6\tilde s$, 
and the scaling property of Brownian motion implies that 
\begin{equation}
W_{cg(\tilde s)} = \sqrt{\frac{cg(\tilde s)}{\tilde s}} W_{\tilde s}\;,
\end{equation} 
where equality is in distribution. Therefore, \eqref{eq:ODE_g} implies
\begin{equation}
\sigma\widetilde X_{\tilde t}
= \sigmac^{3/2} \int_0^{\tilde t} g'(\tilde s) \sqrt{\frac{g(\tilde s)}{\tilde s}}
\e^{-2\alpha(cg(\tilde s)))} W_{\tilde s} \6\tilde s
= c^{3/2} \sigma X_{\tilde t}\;,
\end{equation} 
so that~\eqref{eq:tilde_tau0_2} is equivalent to~\eqref{eq:tilde_tau0} with $\tilde\sigma 
= c^{3/2} \sigma$. 
It remains to choose the value of $c$ in such a way that $\tilde d$ 
satisfies~\eqref{eq:dtilde}. Setting 
\begin{equation}
\label{eq:def_c} 
c = \frac{T(x_0,y_0)}{g(\delta)}
= \frac{1}{x_0^2 + y_0} + \Order{\delta} 
\end{equation} 
entails $d(cg(\delta)) = 0$, and therefore $\tilde d(\delta) = 0$. Furthermore, 
we obtain 
\begin{align}
\tilde d'(\delta) 
&= c \e^{-2\alpha(cg(\delta))} d'(cg(\delta)) g'(\delta) \\
&= c d'(T(x_0,y_0)) (1 + \Order{\delta}) \\
&= -1 + \Order{\delta}\;,
\end{align}
where we have used~\eqref{eq:d_of_t} and~\eqref{eq:alpha}.
A similar computation shows that $\tilde d'(\tilde t)$ has the same 
order whenever $\tilde t = \Order{\delta}$.
We can achieve $\tilde d'(\delta) = -1$ by scaling $\tilde d(\tilde t)$ by a 
factor $1 + \Order{\delta}$, and modifying the definition of $\tilde\sigma$ 
by an equivalent amount. 
\end{proof}


\subsection{Linearised equation: density of hitting times}
\label{ssec:linearised} 

Thanks to Proposition~\ref{prop:time_change}, we have reduced the problem to 
the simpler one of characterising the first time integrated Brownian motion 
crosses a decreasing curve, intersecting the time axis with slope $-1$. 
Dropping the tildes for now, we can rewrite this as characterising 
\begin{equation}
\tau^0 = \inf\bigsetsuch{t>0}{\sigma X_t = d(t)}\;,
\end{equation} 
where $X_t$ is the integrated Brownian motion
\begin{equation}
\label{eq:def_Xt}
X_t = \int_0^t W_s \6s\;,
\end{equation} 
and $d$ is strictly decreasing, with $d(\delta) = 0$ and 
$d'(\delta) = -1$. As a consequence, we have 
\begin{equation}
\label{eq:doft} 
d(t) = -(t-\delta) + \Order{(t-\delta)^2}\;.
\end{equation} 
We will rely for that on results 
in~\cite{Durbin85} on first-passage densities of Gaussian processes through 
a time-dependent boundary. These results depend on the covariance function of the 
process, which we compute in the following lemma.

\begin{lemma}
\label{lem:covariance}
For any choice of\/ $0\leqs s \leqs u \leqs t$, one has 
\begin{align}
\rho(s,u,t) 
:={}& \bigexpec{(X_u-X_s)(X_t - X_s)} \\
={}& \frac23 s^3 - \frac16 u^3 + \frac12 \bigbrak{u^2t - s^2u - s^2t}\;.
\end{align}
\end{lemma}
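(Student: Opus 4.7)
The plan is to reduce the computation to a double integral of the standard Brownian covariance function. First I would write each increment of the integrated Brownian motion as an integral:
\begin{equation*}
 X_u - X_s = \int_s^u W_r \6r\;, \qquad X_t - X_s = \int_s^t W_{r'} \6r'\;.
\end{equation*}
Then, applying Fubini and using the well-known covariance $\expec{W_r W_{r'}} = \min(r,r')$ of standard Brownian motion, I obtain
\begin{equation*}
 \rho(s,u,t) = \int_s^u \int_s^t \min(r,r') \6r' \6r\;.
\end{equation*}

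Next, for each fixed $r\in[s,u]\subseteq [s,t]$, I would split the inner integral at $r'=r$:
\begin{equation*}
 \int_s^t \min(r,r')\6r'
 = \int_s^r r' \6r' + \int_r^t r \6r'
 = \frac{r^2-s^2}{2} + r(t-r)\;.
\end{equation*}
Substituting back and simplifying the integrand to $-\frac12 r^2 + rt - \frac12 s^2$, I would then integrate from $s$ to $u$ to find
\begin{equation*}
 \rho(s,u,t)
 = -\frac{u^3 - s^3}{6} + \frac{t(u^2-s^2)}{2} - \frac{s^2(u-s)}{2}\;.
\end{equation*}

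Finally, a routine collection of terms (noting that $\tfrac{s^3}{6}+\tfrac{s^3}{2} = \tfrac{2s^3}{3}$) yields the claimed expression $\tfrac23 s^3 - \tfrac16 u^3 + \tfrac12\bigbrak{u^2t - s^2u - s^2t}$. There is no real obstacle here: the exchange of expectation and double integration is justified because $(W_r)_{r\in[0,t]}$ has continuous paths and $\expec{\sup_{r\leqs t}W_r^2}<\infty$, so the use of Fubini's theorem is immediate. The entire proof is a short, explicit computation.
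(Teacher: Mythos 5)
Your computation is correct and follows essentially the same route as the paper: both express $\rho(s,u,t)$ as the double integral $\int_s^u\int_s^t \min(v_1,v_2)\,\dd v_1\,\dd v_2$ via the Brownian covariance and then split the integral along the diagonal $\{v_1=v_2\}$ before integrating explicitly. Your explicit evaluation and the final collection of terms agree with the stated formula, so there is nothing to add.
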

\begin{proof}
This follows by writing
\begin{equation}
\rho(s,u,t) = \int_s^u \int_s^t \bigexpec{W_{v_1}W_{v_2}} \6v_1 \6v_2\;,
\end{equation} 
using the property $\bigexpec{W_{v_1}W_{v_2}} = v_1 \wedge v_2$ of Brownian motion,  
and splitting the integral along the line $\set{v_1 = v_2}$. 
\end{proof}

In particular, the covariance and variance of $X_t$ are given by 
\begin{align}
\bigexpec{X_sX_t} &= \rho(s,t) := \rho(0,s,t) = -\frac16s^3 + \frac12 s^2 t\;, \\
\bigexpec{X_t^2} &= \rho(t,t) = \frac13 t^3\;.
\end{align}
The distribution of $X_t$ is centred Gaussian with this covariance, and therefore 
the density of $u^0_t$ at $d(t)$ is given by 
\begin{equation}
\label{eq:def_phi} 
\phi(t) = \frac{1}{\sigma\sqrt{2\pi\rho(t,t)}}
\exp \biggset{-\frac{d(t)^2}{2\sigma^2 \rho(t,t)}}\;.
\end{equation} 
By~\cite[Eq.~(2)]{Durbin85}, the density of the first-passage time $\tau^0$ can 
be written as 
\begin{equation}
\label{eq:psi_b_phi} 
\psi(t) = b(t)\phi(t)\;,
\end{equation} 
where the function $b(t)$ solves a certain fixed-point equation, and is well-approximated 
by 
\begin{equation}
b_0(t) 
= \frac{d(t)}{\rho(t,t)} \partial_s \rho(t,t) - d'(t) 
= \frac{3}{2t} d(t) - d'(t)\;.
\end{equation} 
More precisely, we have from~\cite[Eq.~(7)]{Durbin85} that 
\begin{equation}
\label{eq:psi} 
\psi(t) = b_0(t)\phi(t) + \int_0^t \tilde b(s,t) \phi(t\vert s)\psi(s)\6s\;, 
\end{equation} 
where 
\begin{equation}
\label{eq:phi_ts} 
\phi(t\vert s) 
= \frac{1}{\sigma\sqrt{2\pi\rho(s,t,t)}}
\exp \biggset{-\frac{[d(t)-d(s)]^2}{2\sigma^2 \rho(s,t,t)}}
\end{equation} 
is analogous to $\phi(t)$, but for a starting point $(s,d(s))$ with $s\in[0,t]$, and 
\begin{equation}
\label{eq:btilde} 
\tilde b(s,t) = d'(t) - \beta_1(s,t)d(s) - \beta_2(s,t)d(t)\;, 
\end{equation} 
where $\beta_1$ and $\beta_2$ solve the linear system 
\begin{equation}
\label{eq:beta} 
\begin{pmatrix}
\rho(s,s) & \rho(s,t) \\
\rho(s,t) & \rho(t,t)
\end{pmatrix}
\begin{pmatrix}
\beta_1(s,t) \\ 
\beta_2(s,t) 
\end{pmatrix}
=
\begin{pmatrix}
\partial_t \rho(s,t) \\
\partial_s \rho(t,t)
\end{pmatrix}\;.
\end{equation} 
Substituting~\eqref{eq:phi_ts} in~\eqref{eq:psi} and using~\eqref{eq:psi_b_phi}, we obtain that $b(t)$ 
satisfies the fixed-point equation 
\begin{equation}
\label{eq:b} 
b(t) = b_0(t) + \frac{1}{\sigma\sqrt{2\pi}}
\int_0^t \sqrt{\frac{\rho(t,t)}{\rho(s,t,t)\rho(s,s)}} \tilde b(s,t) b(s) 
\e^{-r(s,t)/(2\sigma^2)} \6s\;,
\end{equation} 
where
\begin{equation}
r(s,t) = \frac{d(s)^2}{\rho(s,s)} - \frac{d(t)^2}{\rho(t,t)} 
+ \frac{[d(t) - d(s)]^2}{\rho(s,t,t)}\;.
\end{equation} 
We will use Banach's fixed point theorem to show that the solution $b$ of~\eqref{eq:b} 
is indeed close to $b_0$. For this, the following two lemmas will be useful.

\begin{lemma}
\label{lem:rmin} 
For sufficiently small $\delta$, there exists a constant $r_{\min} > 0$, 
independent of $\delta$, $s$ and $t$, such that 
\begin{equation}
r(s,t) \geqs \frac{r_{\min}t^2}{s^3}
\geqs \frac{r_{\min}}{s}
\end{equation} 
holds whenever\/ $0\leqs s \leqs t \leqs T_{\max} = 2\delta$. 
\end{lemma}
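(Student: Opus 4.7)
The second inequality $r_{\min} t^2/s^3 \geqs r_{\min}/s$ is immediate from $t \geqs s$, so only the first needs proof. I would reformulate in dimensionless variables: set $\theta = s/t \in (0, 1]$ and $u = \delta/t \in [1/2, \infty)$ (since $0 < t \leqs 2\delta$). The claim $r(s,t) \geqs r_{\min} t^2/s^3$ then becomes $g(\theta, u) := (s^3/t^2) r(s,t) \geqs r_{\min}$, with $r_{\min}$ independent of $\delta$.

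First I would handle the linearised boundary $d_0(t) = \delta - t$. Using Lemma~\ref{lem:covariance} together with the factorisation $\rho(s,t,t) = (t-s)^2(t+2s)/3$, direct substitution gives
\begin{equation*}
 g_0(\theta, u) = 3(u - \theta)^2 - 3\theta^3(u-1)^2 + \frac{3\theta^3}{1+2\theta}\;,
\end{equation*}
a quadratic in $u$ with leading coefficient $3(1 - \theta^3) \geqs 0$. Completing the square, its unconstrained minimum is $9\theta^5/[(1+\theta+\theta^2)(1+2\theta)]$, attained at $u^\star(\theta) = \theta(1+\theta)/(1+\theta+\theta^2)$. For $\theta \geqs (\sqrt{5}-1)/2$, this minimum lies in the feasible range $u \geqs 1/2$ and is strictly positive. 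For smaller $\theta$, the constrained minimum is at $u = 1/2$; the resulting $g_0(\theta, 1/2)$ is a smooth function shown by elementary calculus to be strictly positive on $[0, (\sqrt{5}-1)/2]$ (with value $3/4$ at $\theta = 0$). Combining, $g_0 \geqs c_0 > 0$ uniformly on the full domain.

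Next I would pass from $r_0$ to $r$ by perturbation. Proposition~\ref{prop:time_change} gives $d'(t) = -1 + \Order{\delta}$ uniformly, so upon integration $d(t) = (\delta - t)(1 + \Order{\delta})$ and $d(t) - d(s) = -(t-s)(1 + \Order{\delta})$. Substituting, $r(s,t) = r_0(s,t) + E(s,t)$ with
\begin{equation*}
 (s^3/t^2) \abs{E(s,t)} \leqs C\delta \Bigbrak{3(u-\theta)^2 + 3\theta^3(u-1)^2 + \frac{3\theta^3}{1+2\theta}}\;.
\end{equation*}

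\textbf{The main obstacle} is that this error bound grows as $\Order{u^2}$ for $u \to \infty$, while near $\theta = 1$, $g_0$ remains close to the constant $1$ independently of $u$; thus the error is not uniformly dominated by a small fraction of $g_0$. I plan to resolve this by splitting the domain. For $\theta \leqs 1 - \delta^{1/2}$, the leading quadratic part $3(1-\theta^3)(u-\theta)^2 \gtrsim \delta^{1/2} u^2$ of $g_0$ dominates the error of order $\delta u^2$. For $\theta > 1 - \delta^{1/2}$, one is close to $s = t$, and a Taylor expansion of $r(s,t)$ around this line, using $\lim_{s\to t^-} r(s,t) = (d'(t))^2/t$ together with $s^3/t^2 = t(1 + \Order{\delta^{1/2}})$, directly gives $g(\theta, u) = 1 + \Order{\delta^{1/2}}$. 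Both sub-cases yield $g \geqs c_0/2$ for $\delta$ small enough, so $r_{\min} = c_0/2$.
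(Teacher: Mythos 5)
Your handling of the linearised boundary is correct and is, up to the change of variables $u=1/x$, the same computation as the paper's: the paper rescales $t=\delta x$, $s=\delta p x$ and minimises $g(x,p)/x^2$ over the compact square, finding the same threshold $(\sqrt5-1)/2$, and your unconstrained minimum $9\theta^5/[(1+2\theta)(1+\theta+\theta^2)]$ and minimiser $u^\star(\theta)$ match. Your Case 1 (where $1-\theta\geqs\delta^{1/2}$, so that $g_0\geqs 3(1-\theta^3)(u-u^\star)^2\gtrsim \delta^{1/2}u^2$ beats the $\Order{\delta u^2}$ error, and the uniform bound $g_0\geqs c_0$ handles bounded $u$) is also fine. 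The genuine gap is in Case 2. The claim that a Taylor expansion around $s=t$ \lq\lq directly gives $g=1+\Order{\delta^{1/2}}$\rq\rq\ is not uniform in $u$: for fixed $t$ the derivative of the near-cancelling pair $d(s)^2/\rho(s,s)-d(t)^2/\rho(t,t)$ with respect to $s$ is of size $\delta^2/t^4$, so over the window $\abs{t-s}\leqs\delta^{1/2}t$ the increment of $r$ is of order $\delta^{1/2}\delta^2/t^3$, i.e.\ of order $\delta^{1/2}u^2$ after multiplication by $s^3/t^2$. This is unbounded in the sub-regime $t\ll\delta$ (large $u$) with $s/t$ very close to $1$ — exactly where $1-\theta$ may be far smaller than $\delta^{1/2}$, so neither your Case 1 coercivity nor the diagonal value is available. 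Indeed the asserted estimate is false there (on the strip with $u$ large, $g$ deviates from $1$ by amounts of order $(1-\theta)u^2$), and, more to the point, the stated ingredients do not yield the required \emph{lower} bound; your argument only closes for $t\gtrsim\delta$.

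The gap is repairable, but it needs a structural input your sketch does not contain: the deviation from the diagonal value is one-signed in the dangerous regime. Concretely, for $v\in(0,\delta)$ one has $d(v)>0$ and $2vd'(v)-3d(v)=v-3\delta+\Order{\delta^2}<0$, so $v\mapsto d(v)^2/v^3$ is decreasing there; hence for $s\leqs t\leqs\delta$ the first two terms of $r$ contribute nonnegatively and $r(s,t)\geqs 3[d(t)-d(s)]^2/[(t-s)^2(t+2s)]\geqs(1-\Order{\delta})/t$, which gives $g\geqs\theta^3(1-\Order{\delta})\geqs1-\Order{\delta^{1/2}}$ on your strip, while for $t\in[\delta,2\delta]$ (bounded $u$) the crude error really is $\Order{\delta}$ and the linearised bound suffices. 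With this supplement your splitting closes. Note that the paper avoids the issue altogether by absorbing the nonlinearity of the boundary as uniform relative factors $(1+\Order{\delta})$ multiplying each term before minimising the clean polynomial ratio on a compact domain; your proposal is more explicit about where the error control is delicate, but as written the key sub-case is asserted rather than proved.
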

\begin{proof}
We first note that Lemma~\ref{lem:covariance} implies that 
\begin{equation}
\label{eq:rho_stt} 
\rho(s,t,t) 
= \frac23 s^3 + \frac13 t^3 - s^2 t 
= \frac13 (t-s)^2 (t+2s)\;.
\end{equation} 
It follows that 
\begin{equation}
r(s,t) 
= 3 \biggbrak{\frac{d(s)^2}{s^3} - \frac{d(t)^2}{t^3} 
+ \frac{[d(t)-d(s)]^2}{(t-s)^2(t+2s)}}\;.
\end{equation} 
It follows from the mean value theorem and the fact that $d'(t) = -1 + \Order{\delta}$ 
that 
\begin{equation}
\frac{[d(t)-d(s)]^2}{(t-s)^2(t+2s)}
= \frac{1+\Order{\delta}}{t+2s}\;.
\end{equation} 
This shows in particular that $r(t,t)$ is well-defined (it is equal to $d'(t)^2/t$). 
It will be convenient to map the triangular domain $\set{0\leqs s \leqs t \leqs 2\delta}$ 
to the rectangle $\set{0\leqs x\leqs 2, 0\leqs p \leqs 1}$, using the change of variables
\begin{equation}
t = \delta x\;, \qquad 
s = \delta p x\;. 
\label{eq:ts_xk} 
\end{equation} 
Using the fact that $t+2s = \delta x(1+2p) \leqs 3\delta x$
and $d(t)^2 = (\delta-t)^2(1+\Order{\delta})$, we obtain
\begin{equation}
r(\delta x, \delta p x) \geqs \frac{3}{\delta p^3 x^3} g(x, p)
(1+\Order{\delta})\;,
\label{eq:r_scaled} 
\end{equation} 
where
\begin{align}
g(x,p) &= (1-px)^2 - p^3(1-x)^2 + \frac{p^3x^2}{3} \\
&= ax^2 - 2bx + c\;,
\end{align}
with 
$a = p^2(1 - \tfrac23 p)$, 
$b = p(1-p^2)$, 
and $c = 1-p^3$.
One can easily compute the minimum of $g(x,p)$ over $x\in[0,2]$. However, it turns 
out to be preferable to minimise $g(x,p)/x^2$. One finds that 
\begin{equation}
\frac{\partial}{\partial x} \biggpar{\frac{g(x,p)}{x^2}}
= \frac{2(bx-c)}{x^3}
\end{equation} 
vanishes in 
\begin{equation}
x^* = \frac{c}{b} = \frac{p^2+p+1}{p(1+p)} = 1 + \frac{1}{p^2 + p}\;.
\end{equation} 
There are two cases to consider:
\begin{enumerate}
\item  If $p \leqs p^* = \frac{\sqrt{5}-1}{2}$, then $x^* \geqs 2$, so that 
$g(x,p)/x^2$ is decreasing for $x\in[0,2]$, and therefore 
\begin{equation}
\frac{g(x,p)}{x^2} 
\geqs \frac{g(2,p)}{4} 
= \frac14 \Bigbrak{\frac13 p^3 + (1-2p)^2}\;.
\end{equation} 
One easily checks that this is bounded below by a positive constant 
for $p\in[0,p^*]$. 

\item   If $p^* < p \leqs 1$,  then $x^* \in (0,2)$ and 
\begin{equation}
\frac{g(x,p)}{x^2} 
\geqs \frac{g(x^*,p)}{(x^*)^2} 
= a - \frac{b^2}{c} 
= p^2 \biggbrak{1 - \frac23 p - \frac{(1-p^2)(1+p)}{1+p+p^2}}\;,
\end{equation} 
which is also bounded below by a positive constant 
for $p\in[p^*,1]$. 
\end{enumerate}
The result follows by replacing this lower bound in~\eqref{eq:r_scaled}, together with 
the relations 
\begin{equation}
x = \frac{t}{\delta}\;, \qquad 
p = \frac{s}{\delta x} = \frac{s}{t}
\end{equation} 
which are the inverse of the transformation~\eqref{eq:ts_xk}. 
\end{proof}

The next lemma gives a bound on the prefactor of the exponential term in~\eqref{eq:b}.

\begin{lemma}
\label{lem:prefactor}
There exists a constant $M_0 > 0$ such that for any $0 < s \leqs t \leqs T_{\max}$, 
one has 
\begin{equation}
\Biggabs{\sqrt{\frac{\rho(t,t)}{\rho(s,t,t)\rho(s,s)}} \tilde b(s,t)} 
\leqs \frac{M_0\delta}{s^{5/2}}\;.
\end{equation} 
\end{lemma}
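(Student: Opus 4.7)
The plan is to exploit hidden cancellations in the expression for $\tilde b(s,t)$ so that the singular prefactor $\sqrt{\rho(t,t)/[\rho(s,t,t)\rho(s,s)]}$, which blows up like $(t-s)^{-1}$ as $s\to t$, is tamed by a matching $(t-s)$ factor in $\tilde b$. First, I would solve the linear system~\eqref{eq:beta} explicitly by Cramer's rule, using the covariance formulas $\rho(s,s) = s^3/3$, $\rho(t,t) = t^3/3$, $\rho(s,t) = s^2(3t-s)/6$ from Lemma~\ref{lem:covariance}, together with $\partial_t\rho(s,t) = s^2/2$ and $\partial_s\rho(t,t) = t^2/2$. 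A short computation shows that the determinant factors as $s^3(t-s)^2(4t-s)/36$, yielding
\begin{equation*}
 \beta_1(s,t) = -\frac{3t^2}{s(t-s)(4t-s)}\;, \qquad
 \beta_2(s,t) = \frac{3(2t-s)}{(t-s)(4t-s)}\;,
\end{equation*}
so that the sum $\beta_1+\beta_2 = -3(t-s)/[s(4t-s)]$ already carries an explicit $(t-s)$ factor even though the individual $\beta_i$ diverge at $s=t$.

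Next, I would rewrite
\begin{equation*}
 \tilde b(s,t) = d'(t) + \beta_1(s,t)\bigbrak{d(t)-d(s)} - \bigbrak{\beta_1(s,t)+\beta_2(s,t)}\, d(t)
\end{equation*}
and Taylor-expand $d(t)-d(s) = (t-s)d'(t) + \tfrac12(t-s)^2 d''(\xi)$ for some $\xi\in(s,t)$. Substituting and using $\beta_1(t-s) = -3t^2/[s(4t-s)]$, the combination $d'(t)[1+\beta_1(t-s)]$ simplifies to $-(3t-s)(t-s)d'(t)/[s(4t-s)]$, whence
\begin{equation*}
 \tilde b(s,t) = \frac{(t-s)}{s(4t-s)}\bigbrak{-(3t-s)d'(t) + 3d(t)} + \Order{\frac{t^2(t-s)}{s(4t-s)}\|d''\|_\infty}\;.
\end{equation*}
Now applying the bounds from Proposition~\ref{prop:time_change}, namely $d(t) = \delta - t + \Order{\delta^2}$ and $d'(t) = -1 + \Order{\delta}$ on the domain $[0,T_{\max}]$, the bracket evaluates to $-(3t-s)(-1) + 3(\delta-t) = 3\delta - s$ at leading order, so
\begin{equation*}
 \tilde b(s,t) = \frac{(t-s)(3\delta-s)}{s(4t-s)} + \Order{\frac{(t-s)\delta^2}{s(4t-s)}}\cdot
\end{equation*}
Since $s\leqs t\leqs 2\delta$ forces $|3\delta - s| \leqs 3\delta$, this gives $|\tilde b(s,t)| \leqs C\delta(t-s)/[s(4t-s)]$ for some constant $C$ independent of $\delta$.

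Finally, combining with $\rho(s,t,t) = (t-s)^2(t+2s)/3$ from~\eqref{eq:rho_stt} yields
\begin{equation*}
 \Biggabs{\sqrt{\frac{\rho(t,t)}{\rho(s,t,t)\rho(s,s)}}\, \tilde b(s,t)}
 \leqs \sqrt{\frac{3t^3}{(t-s)^2(t+2s)s^3}}\cdot\frac{C\delta(t-s)}{s(4t-s)}
 = \frac{C\sqrt{3}\,\delta\, t^{3/2}}{\sqrt{t+2s}\,s^{5/2}(4t-s)}\;.
\end{equation*}
The $(t-s)^{-1}$ singularity has cancelled. Applying the elementary bounds $t+2s\geqs t$ and $4t-s\geqs 3t$ (the latter valid for $s\leqs t$) yields the desired estimate with $M_0 = C/\sqrt{3}$.

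The main obstacle is the delicate cancellation in the middle step: the terms $d'(t)$ and $(\beta_1+\beta_2)d(t)$ are individually of order one, and $\beta_1[d(t)-d(s)]$ has leading part of order $1/s$, so only their precise combination -- together with the boundary conditions $d(\delta) = 0$ and $d'(\delta) = -1$ extracted from Proposition~\ref{prop:time_change} -- produces the crucial $(t-s)$ factor that absorbs the singular prefactor. Verifying that all remainder terms (the Taylor remainder $\tfrac12(t-s)^2 d''$ amplified by the large $\beta_1$, and the $\Order{\delta}$ deviations of $d(t)$ and $d'(t)$ from their linearised values) likewise carry a compensating $(t-s)$ factor is the key technical point that prevents the estimate from blowing up on the diagonal $s = t$.
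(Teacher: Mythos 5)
Your proof is correct and follows essentially the same route as the paper: compute $\beta_1,\beta_2$ by Cramer's rule, Taylor-expand $d$ to extract a factor $(t-s)$ from $\tilde b$ that cancels the singularity of the prefactor, then bound using $\rho(s,t,t)\geqs \tfrac13(t-s)^2 t$ and $4t-s\geqs 3t$. The only cosmetic difference is that you further simplify the bracket $3d(t)-(3t-s)d'(t)$ to its leading-order value $3\delta-s$, whereas the paper simply bounds each term by $\Order{\delta}$ using $t\leqs 2\delta$; the estimates are equivalent.
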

\begin{proof}
The matrix $M(s,t)$ appearing in~\eqref{eq:beta} has determinant 
\begin{equation}
\det M(s,t) = \frac{s^3}{36}
\bigbrak{4t^3 - 9st^2 + 6s^2t - s^3} 
= \frac{1}{36}s^3(t-s)^2 (4t-s)\;.
\end{equation} 
This allows to solve~\eqref{eq:beta} for $\beta_1$ and $\beta_2$. 
Using the fact that 
\begin{equation}
\partial_t \rho(s,t) = \frac12 s^2\;, \qquad 
\partial_s \rho(t,t) = \frac12 s^2\;, 
\end{equation} 
we obtain 
\begin{equation}
\beta_1(s,t) = -\frac{3t^2}{s(t-s)(4t-s)}\;, \qquad 
\beta_2(s,t) = \frac{3(2t-s)}{(t-s)(4t-s)}\;.
\end{equation} 
By Taylor's formula, there exists $\theta\in(s,t)$ such that 
\begin{equation}
d(s) = d(t) - (t-s) d'(t) + \frac12(t-s)^2 d''(\theta)\;.
\end{equation}
Plugging the last two relations into~\eqref{eq:btilde} and rearranging 
terms yields 
\begin{equation}
\tilde b(s,t) 
= \frac{t-s}{s(4t-s)}
\biggbrak{3d(t) - (3t-s) d'(t) + \frac32 t^2 d''(\theta)}\;.
\end{equation} 
Using $4t-s\geqs 3t$ and $3t-s \leqs 3t$, we obtain 
\begin{equation}
\abs{\tilde b(s,t)} 
\leqs \frac{t-s}{st} \Bigbrak{\abs{d(t)} + t\abs{d'(t)} + \frac12t^2 \abs{d''(\theta)}}\;.  
\end{equation} 
The result then follows by using the lower bound 
$\rho(s,t,t) \geqs \frac13 (t-s)^2t$, which follows from~\eqref{eq:rho_stt}, 
together with the facts that $d(t) = \Order{\delta}$, 
while $d'$ and $d''$ are bounded on $[0,T_{\max}]$. 
\end{proof}

We can now show the main result of this subsection, using a classical fixed-point argument. 

\begin{proposition}
For sufficiently small $\sigma$, there exists a constant $\kappa > 0$ such 
that~\eqref{eq:b} has a unique solution satisfying 
\begin{equation}
\bigabs{b(t) - b_0(t)} \leqs \frac{\delta^3}{t} \e^{-\kappa/\sigma^2} 
\end{equation} 
for all $t\in[0,T_{\max}]$. 
\end{proposition}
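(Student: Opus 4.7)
The plan is to recast~\eqref{eq:b} as a fixed-point problem $b = \mathcal{T}b$ with $\mathcal{T}b = b_0 + \mathcal{S}b$, where $\mathcal{S}$ denotes the integral operator on the right-hand side of~\eqref{eq:b}, and to apply Banach's fixed-point theorem. Because $b_0(t) = \frac{3}{2t}d(t) - d'(t)$ has a $1/t$ singularity at the origin with coefficient $\frac{3}{2}d(0) = \frac{3}{2}\delta$, I would work on the Banach space $\mathcal{B}$ of continuous functions $b\colon(0, T_{\max}] \to \R$ equipped with the weighted sup-norm $\|b\|_* = \sup_{t \in (0, T_{\max}]} t|b(t)|$. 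A short computation using $|d(t)| = \Order{\delta}$ and $d'(t) = -1+\Order{\delta}$ on $[0, T_{\max}]$ yields $\|b_0\|_* \leqs C_0\delta$.

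The central step is to bound the operator norm of $\mathcal{S}$ on $\mathcal{B}$. Combining Lemmas~\ref{lem:rmin} and~\ref{lem:prefactor}, the kernel is controlled pointwise by $M_0\delta\,s^{-5/2}\e^{-r_{\min}/(2\sigma^2 s)}$, and inserting $|b(s)| \leqs \|b\|_*/s$ leaves the scalar integral
\begin{equation}
I(t) = \int_0^t \frac{1}{s^{7/2}} \e^{-r_{\min}/(2\sigma^2 s)}\6s\;.
\end{equation}
The substitution $u = 1/s$ turns $I(t)$ into $\int_{1/t}^\infty u^{3/2}\e^{-r_{\min}u/(2\sigma^2)}\6u$, and a single integration by parts in the regime $r_{\min}/(2\sigma^2 t) \gg 1$ yields $I(t) \leqs C\sigma^2 t^{-3/2}\e^{-r_{\min}/(2\sigma^2 t)}$. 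Multiplying by the weight $t$ and using that $t \mapsto t^{-1/2}\e^{-r_{\min}/(2\sigma^2 t)}$ attains its unconstrained maximum at $t^\star = r_{\min}/\sigma^2 \gg T_{\max}$, hence is monotone increasing on $[0, T_{\max}]$ for small~$\sigma$, I would deduce
\begin{equation}
\|\mathcal{S}\|_* \leqs C_1\sqrt{\delta}\,\sigma\,\e^{-r_{\min}/(4\sigma^2\delta)}\;,
\end{equation}
which is $\leqs \tfrac12$ once $\sigma$ is sufficiently small. Banach's theorem then produces a unique fixed point $b \in \mathcal{B}$.

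To finish, the geometric-series bound gives $\|b - b_0\|_* \leqs 2\|\mathcal{S}b_0\|_* \leqs 2\|\mathcal{S}\|_*\|b_0\|_* \leqs C_2\,\delta^{3/2}\sigma\,\e^{-r_{\min}/(4\sigma^2\delta)}$. Choosing any $\kappa < r_{\min}/(4\delta)$ and splitting $\e^{-r_{\min}/(4\sigma^2\delta)} = \e^{-\kappa/\sigma^2}\cdot\e^{-(r_{\min}/(4\delta)-\kappa)/\sigma^2}$, the second factor absorbs the polynomial prefactor $C_2\delta^{3/2}\sigma$ for $\sigma$ small, so that $\|b-b_0\|_* \leqs \delta^3 \e^{-\kappa/\sigma^2}$; unwrapping the weighted norm then yields the pointwise bound claimed in the proposition. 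The main obstacle is not the contraction argument itself but the careful bookkeeping of the balance in $I(t)$: the highly singular power $s^{-7/2}$ (produced by combining the prefactor of Lemma~\ref{lem:prefactor} with the weight arising from $b_0$) must be defeated by the exponential $\e^{-r_{\min}/(2\sigma^2 s)}$ both at $s\to 0$ and at $s \to t$, and the weighted norm has to be tuned precisely to the singularity of $b_0$ so that the resulting bound is uniform in $t$ rather than degenerating as $t \to 0$.
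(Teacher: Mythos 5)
Your proposal is correct and follows essentially the same route as the paper's proof: a Banach fixed-point argument on $\cC((0,T_{\max}],\R)$ with the weighted norm $\sup_t t\abs{b(t)}$, using Lemmas~\ref{lem:rmin} and~\ref{lem:prefactor} to bound the kernel and the change of variables $u=1/s$ (the paper uses $x=r_{\min}/(2\sigma^2 s)$, equivalent up to rescaling) to control the scalar integral by an exponentially small quantity. You simply track the $\delta$-dependence of the exponent and the prefactor slightly more explicitly, and exploit the linearity of $\mathcal{S}$ to phrase the contraction as an operator-norm bound rather than, as in the paper, an invariant-ball plus Lipschitz estimate.
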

\begin{proof}
We will apply Banach's fixed-point theorem to the space 
$\cB = \cC([0,T_{\max}],\R)$ of continuous functions $b:[0,T_{\max}]\to\R$, 
equipped with the weighted supremum norm 
\begin{equation}
\norm{b} = \sup_{0\leqs t\leqs T_{\max}} t\abs{b(t)}\;.
\end{equation} 
This definition guarantees that $\norm{b_0} \leqs M_1\delta$ for a constant 
$M_1$ independent of $t$ and $\delta$. 
We write \eqref{eq:b} as the fixed-point equation $b = \Gamma b$, where 
$\Gamma:\cB\to\cB$ is the integral operator 
\begin{equation}
(\Gamma b)(t) = b_0(t) + \int_0^t k(s,t)b(s)\6s\;,
\end{equation} 
with 
\begin{equation}
k(s,t) = \frac{1}{\sigma\sqrt{2\pi}}
\sqrt{\frac{\rho(t,t)}{\rho(s,t,t)\rho(s,s)}} \tilde b(s,t)
\e^{-r(s,t)/(2\sigma^2)}\;.
\end{equation} 
Lemmas~\ref{lem:rmin} and~\ref{lem:prefactor} imply that 
\begin{equation}
\bigabs{k(s,t)} \leqs \frac{M_0\delta}{\sigma\sqrt{2\pi}s^{5/2}}
\e^{-r_{\min}/(2\sigma^2 s)}\;.
\end{equation} 
It follows that 
\begin{equation}
\bigabs{(\Gamma b)(t)} 
\leqs \frac{M_1\delta}{t} 
+ \frac{M_0\delta}{\sqrt{2\pi}}
\int_0^t \frac{\e^{-r_{\min}/(2\sigma^2 s)}}{\sigma s^{7/2}} \6s \norm{b}\;.
\end{equation} 
Using the change of variables $x = r_{\min}/(2\sigma^2 s)$, one checks 
that the integral is bounded by $\e^{-\kappa/\sigma^2}$ for a constant $\kappa$ 
comparable to $r_{\min}$. This implies 
\begin{equation}
\norm{\Gamma b} \leqs M_2 \delta 
\bigbrak{1 + \delta\e^{-\kappa/\sigma^2} \norm{b}}
\end{equation} 
for a constant $M_2 > 0$. Choosing for instance $R = 2M_2\delta$, we see 
that for $\norm{b}\leqs R$ and $\sigma$ small enough, one has 
$\norm{\Gamma b} \leqs R$, so that $\Gamma$ maps the ball of radius $R$ 
into itself. 

A similar argument shows that if $b_1$ and $b_2$ belong to that ball, then 
\begin{equation}
\norm{\Gamma b_2 - \Gamma b_1} \leqs M_2 \delta^2\e^{-\kappa/\sigma^2} \norm{b_2-b_1}\;.
\end{equation} 
Therefore, for sufficiently small $\sigma$, $\Gamma$ is a contraction
on the ball $\set{\norm{b}\leqs R}$, 
with contraction constant $\lambda = M_2 \delta^2\e^{-\kappa/\sigma^2} < 1$, 
so that Banach's fixed point theorem implies that the equation $\Gamma b = b$ 
admits a unique solution. Furthermore, the sequence of $b_n = \Gamma^n b_0$ 
converges to the fixed point $b$, so that 
\begin{align}
\norm{b_0 - b}
&\leqs \norm{b_0 - b_1} + \norm{b_1 - b_2} + \dots \\
&\leqs (1 + \lambda + \lambda^2 + \dots) \norm{b_0 - b_1} \\
&= \frac{1}{1-\lambda} \norm{b_0 - b_1}\;.
\end{align}
Since $b_0 = \Gamma(0)$, we have 
$\norm{b_0 - b_1} \leqs \lambda \norm{b_0} \leqs M_1\delta\lambda$. 
By slightly decreasing the value of $\kappa$, we can absorb any multiplicative 
constant in the term $\e^{-\kappa/\sigma^2}$, so that 
$\norm{b_0-b} \leqs \delta^3\e^{-\kappa/\sigma^2}$, 
which implies the result.
\end{proof}

Owing to~\eqref{eq:psi}, a direct consequence of this result is that the 
density of $\tau^0$ is given on $[0,T_{\max}]$ by 
\begin{equation}
\label{eq:psi2} 
\psi(t) 
= \biggbrak{\frac{3}{2t}d(t) - d'(t) 
+ \biggOrder{\frac{\delta^3}{t}\e^{-\kappa/\sigma^2}}} \phi(t)\;.
\end{equation} 
This will allow us to compute moments of $\tau^0$.


\subsection{Linearised equation: moments of hitting times}
\label{ssec:linearised_moments} 

We now compute the expectation and variance of the first hitting time $\tau^0$, 
using the density obtained in the previous section. To this end, we introduce the event
\begin{equation}
 \label{eq:def_Omega1}
 \Omega_1 = \bigsetsuch{\omega}{\tau^0(\omega) < T_{\max}}\;,
\end{equation} 
where we recall that $T_{\max} = 2\delta$. 
We will show in Corollary~\ref{cor:Omega1} below that $\Omega_1$ has a probability 
exponentially close to $1$. 

\begin{proposition}
\label{prop:moments_tau0}  
For sufficiently small $\sigma$ and $\delta$, the first two moments of 
$\tau^0\indicator{\Omega_1}$ admit the expansions
\begin{equation}
\label{eq:bound_Etau0}
\bigexpec{\tau^0 \indicator{\Omega_1}}
= \delta+\frac12 \delta^2\sigma^2+\Order{\delta^3\sigma^2}\;,
\end{equation}
and \begin{equation}
\label{eq:bound_Vtau0}
\variance\bigbrak{\tau^0\indicator{\Omega_1}}
= \frac13 \delta^3 \sigma^2 +\Order{\delta^4\sigma^2}\;.
\end{equation}
Furthermore, the third moment satisfies 
\begin{equation}
 \label{eq:bound_Etau03}
\bigexpec{(\tau^0)^3 \indicator{\Omega_1}}
= \delta^3 + \Order{\delta^4\sigma^2}\;.
\end{equation} 
\end{proposition}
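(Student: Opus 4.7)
The plan is to extract the moments directly from the density formula~\eqref{eq:psi2}, exploiting the sharp concentration of $\psi$ near $t=\delta$. Since $d(\delta)=0$ with $d'(\delta)=-1$ and $\rho(\delta,\delta)=\delta^3/3$, the Gaussian factor $\phi(t)$ peaks at $t=\delta$ with characteristic width of order $\sigma\delta^{3/2}$. I would first dispose of the tails: outside a window $\abs{t-\delta}\leqs\delta^{3/2}$, a lower bound on $d(t)^2/\rho(t,t)$ analogous to Lemma~\ref{lem:rmin} would give $\phi(t)=\Order{\sigma^{-1}\delta^{-3/2}\e^{-\kappa'/\sigma^2}}$, and a similar Gaussian estimate would control $\fP(\Omega_1^c)$. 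Together these contribute only $\Order{\delta^k\e^{-\kappa/\sigma^2}}$ to the $k$-th moment, which for sufficiently small $\sigma$ is absorbed in the stated algebraic error.

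Inside the peak, the natural change of variables is $t=\delta+\sigma\delta^{3/2}u$. Taylor-expanding $d(t)=-(t-\delta)+\Order{(t-\delta)^2}$, $\rho(t,t)=t^3/3$ and $b_0(t)=\tfrac{3}{2t}d(t)-d'(t)$ about $t=\delta$ and collecting terms up to order $\sigma\sqrt{\delta}$ should yield, after some bookkeeping, the expansion
\begin{equation}
\psi(t)\,\6t
= \sqrt{\tfrac{3}{2\pi}}\,\e^{-3u^2/2}
\biggbrak{1+\sigma\sqrt{\delta}\Bigpar{-3u+\tfrac{9}{2}u^3}
+\Order{\sigma\delta^{3/2}(1+u^4)}}\,\6u\;.
\end{equation}
The $\Order{\sigma\sqrt{\delta}}$ correction gathers three ingredients: the factor $\tfrac{3}{2t}d(t)$ in $b_0$, the $t$-dependence of $\rho(t,t)^{-1/2}$ in the prefactor of $\phi$, and the cubic-in-$u$ contribution from expanding the exponent $d(t)^2/[2\sigma^2\rho(t,t)]$ beyond quadratic order.

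The three moment estimates then reduce to Gaussian integrals against the density $\sqrt{3/(2\pi)}\,\e^{-3u^2/2}$ of $\cN(0,1/3)$, whose second and fourth moments both equal $1/3$. For the expectation, $\bigexpec{\tau^0\indicator{\Omega_1}}=\delta\fP(\Omega_1)+\sigma\delta^{3/2}\int u\,\psi(t)\,\6t$; the leading integral $\int u\,\cdot$ vanishes, and the odd correction contributes $\sigma\sqrt{\delta}\bigbrak{-3\cdot\tfrac{1}{3}+\tfrac{9}{2}\cdot\tfrac{1}{3}}=\sigma\sqrt{\delta}/2$, producing the announced $\tfrac{1}{2}\sigma^2\delta^2$ term. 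For the variance, $\bigexpec{(\tau^0-\delta)^2\indicator{\Omega_1}}=\sigma^2\delta^3/3+\Order{\sigma^2\delta^4}$ since the odd $\Order{\sigma\sqrt{\delta}}$ correction multiplied by $u^2$ integrates to zero; this combined with $\bigexpec{\tau^0\indicator{\Omega_1}}^2=\delta^2+\sigma^2\delta^3+\Order{\sigma^2\delta^4}$ and $\bigexpec{(\tau^0)^2\indicator{\Omega_1}}=\delta^2+\sigma^2\delta^3+\sigma^2\delta^3/3+\Order{\sigma^2\delta^4}$ yields $\tfrac{1}{3}\sigma^2\delta^3$. For the third moment, expanding $t^3=\delta^3+\Order{\sigma\delta^{5/2}u}$ on the peak shows that $\delta^3\fP(\Omega_1)$ is the only $\Order{\delta^3}$ contribution, the rest being $\Order{\sigma^2\delta^4}$.

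The main obstacle is the precise bookkeeping leading to the coefficient $\tfrac{1}{2}$ in~\eqref{eq:bound_Etau0}: three distinct $\Order{\sigma\sqrt{\delta}}$ sources must be correctly identified and combined with specific Gaussian moments, and the resulting cancellation to a definite nonzero multiple of $1$ requires that no ingredient be overlooked. A secondary technical point is controlling the residual $\Order{\delta^3\e^{-\kappa/\sigma^2}/t}$ in~\eqref{eq:psi2} together with the tail and $\Omega_1^c$ contributions, which is possible provided $\sigma$ is small enough that $\e^{-\kappa/\sigma^2}\leqs\sigma^2\delta^2$.
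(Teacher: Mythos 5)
Your proposal is correct and follows essentially the same route as the paper: a Laplace-type expansion of the Durbin density~\eqref{eq:psi2} around $t=\delta$ at the scale $\sigma\delta^{3/2}$ (the paper's change of variables $z=\sqrt{3}(\delta-t)/(\delta^{3/2}\sigma)$ is yours up to normalisation), followed by Gaussian moment computations, with the tails and $\fP(\Omega_1^c)$ absorbed as exponentially small errors; your coefficient bookkeeping ($-3u+\tfrac92u^3$ against the $\cN(0,1/3)$ moments giving $\tfrac12$) matches the paper's computation via the Taylor coefficients of $q_1$ at $1$. The only slip is cosmetic: in the third-moment argument the expansion should read $t^3=\delta^3+\Order{\sigma\delta^{7/2}u}$ rather than $\Order{\sigma\delta^{5/2}u}$, and it is the exponent $7/2$ that actually yields the stated $\Order{\delta^4\sigma^2}$ error.
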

\begin{proof}
The expected value of $\tau^0 \indicator{\Omega_1}$ is given by the integral
\begin{equation}
\label{eq:moments_tau0_integral1} 
\expec{\tau^0 \indicator{\Omega_1}} = \int_0^{T_{\max}} t \, \psi(t) \6t\;,
\end{equation}
where~\eqref{eq:psi2}, \eqref{eq:doft} and~\eqref{eq:def_phi} imply that the density $\psi$ 
is given by  
\begin{equation}
\psi(t) = \frac{\sqrt{3}}{\sigma\sqrt{2\pi t^3}}
\biggbrak{\biggpar{\frac{3\delta}{2t}-\frac12}
+\Order{\delta-t}+\biggOrder{\frac{\delta^3}{t}\e^{-\kappa/\sigma^2}}}
\exp \biggset{-\frac{3d(t)^2}{2\sigma^2 t^3}}\;.
\end{equation}
Substituting this in the integral~\eqref{eq:moments_tau0_integral1} gives 
\begin{equation}
\expec{\tau^0 \indicator{\Omega_1}}
=\frac{1}{\sigma}\sqrt{\frac{3}{2\pi}}\int_0^{T_{\max}}
\biggbrak{q_\delta(t)+r_1(t)+\bar r_1(t)}
\exp \biggset{-\frac{p_\delta(t)}{\sigma^2}(1+r_2(t))} \6t\;,
\end{equation}
where 
\begin{equation}
q_\delta(t)=\frac{1}{\sqrt{t}}\biggpar{\frac{3\delta}{2t}-\frac12}\;, \qquad  
p_\delta(t)=\frac32 \frac{(\delta-t)^2}{t^3}\;,
\end{equation}
and the remainders satisfy 
$r_1(t), r_2(t) = \Order{\delta-t}$ and 
$\bar r_1(t) = \Order{\delta^3 t^{-1} \e^{-\kappa/\sigma^2}}$. 
We note that the remainder $\bar r_1(t)$ yields only an exponentially small error term, 
despite the fact that it diverges like $t^{-1}$, thanks to the behaviour in $t^{-3}$ 
of $p_\delta(t)$. Since this is negligible with respect to any algebraic order term 
in $\sigma$, we can ignore it from now on. 
We now perform the change of variables
\begin{equation}
z=\frac{\sqrt{3}}{\delta^{3/2}\sigma }(\delta-t)\;,
\end{equation}
which yields 
\begin{equation}
\begin{split}
 \expec{\tau^0 \indicator{\Omega_1}}
=\frac{\delta^{3/2}}{\sqrt{2\pi}}
\int_{-\sqrt{3}/(\sqrt{\delta}\sigma)}^{\sqrt{3}/(\sqrt{\delta}\sigma)}
&\biggbrak{q_\delta\biggpar{\delta-\frac{\delta^{3/2}\sigma}{\sqrt{3}}z}
+\tilde r_1(z)}\\
&\times\exp\biggset{-\frac{1}{\sigma^2}p_\delta
\biggpar{\delta-\frac{\delta^{3/2}\sigma}{\sqrt{3}}z}
\bigpar{1+\tilde r_2(z)}}\6z\;,
\end{split}
\end{equation}
with remainders $\tilde r_1(z), \tilde r_2(z) = \Order{\delta^{3/2}\sigma z}$. 
To simplify this expression, we introduce 
\begin{equation}
\label{eq:def_sigmatilde} 
\tilde{\sigma}=\frac{\delta^{1/2}\sigma}{\sqrt{3}}\;,
\end{equation}
and note that
\begin{equation}
q_\delta\biggpar{\delta-\frac{\delta^{3/2}\sigma}{\sqrt{3}}z}
= q_\delta \bigpar{\delta(1-\tilde\sigma z)}
=\frac{1}{\sqrt{\delta}}q_1\bigpar{1-\tilde\sigma z}\;,
\end{equation}
and 
\begin{equation}
p_\delta\biggpar{\delta-\frac{\delta^{3/2}\sigma}{\sqrt{3}}z}
= \frac{z^2}{2}\frac{\delta^3}{(\delta-\delta\tilde\sigma z)^3}\sigma^2
= \frac{z^2}{2}\frac{1}{(1 - \tilde\sigma z)^3}\sigma^2\;.
\end{equation}
The expectation then becomes
\begin{equation}
\label{eq:expec_tau^0}
\expec{\tau^0 \indicator{\Omega_1}}
= \frac{\delta}{\sqrt{2\pi}}\int_{-1/\tilde{\sigma}}^{1/\tilde{\sigma}}
\bigbrak{q_1(1-\tilde{\sigma}z)+\tilde r_1(z)}
\exp\biggset{-\frac{z^2}{2}\frac{1+\tilde r_2(z)}{(1-\tilde{\sigma}z)^3}}\6z\;.
\end{equation}
For sufficiently small $\tilde\sigma$, we can use the Taylor expansions 
\begin{equation}
q_1(1-\tilde{\sigma}z)
=q_1(1)-q'_1(1)\tilde{\sigma}z+\frac12 q''_1(1)\tilde{\sigma}^2z^2
+\Order{\tilde{\sigma}^3z^3}\;,
\end{equation}
and 
\begin{align}
\exp\biggset{-\frac{z^2}{2}\frac{1+\tilde r_2(z)}{(1-\tilde{\sigma}z)^3}}
&=\exp\biggset{-\frac{z^2}{2}(1+3\tilde{\sigma}z
+6\tilde{\sigma}^2z^2+\Order{\tilde{\sigma}^3z^3})(1+\tilde r_2(z))}\\
&=\exp\biggset{-\frac32\tilde{\sigma}z^3
-3\tilde{\sigma}^2z^4+\Order{\tilde{\sigma}^3z^5} + \bigOrder{z^2\tilde r_2(z)}}
\e^{-z^2/2}\\
&=\biggbrak{1-\frac32z^3\tilde{\sigma}
+\biggpar{-3z^4+\frac98z^6}\tilde{\sigma}^2
+\Order{\tilde{\sigma}^3z^5} + \bigOrder{z^2\tilde r_2(z)}}
\e^{-z^2/2}\;.
\end{align}
Using the fact that $\tilde r_i(z) = a_i\delta\tilde\sigma z + 
\Order{\delta^2\tilde\sigma^2z^2}$, $i=1,2$, for some constants $a_1, a_2$, 
as well as the expression
\begin{equation}
\frac{1}{\sqrt{2\pi}}
\int_{-\infty}^{\infty}z^k\e^{-z^2/2}\6z = 
\begin{cases}
\prod_{i=1}^{k/2} (2i-1)
& \text{if $k$ is even} \;, \\
0 &  \text{if $k$ is odd}\;,
\end{cases}
\end{equation}
for the moments of the standard normal law, 
we find, upon substituting the expansions into~\eqref{eq:expec_tau^0},  
\begin{equation}
\expec{\tau^0 \indicator{\Omega_1}}
=\delta\biggbrak{q_1(1)
+\biggpar{\frac{63}{8}q_1(1)+\frac92 q'_1(1)
+\frac12 q''_1(1)}\tilde{\sigma}^2+\Order{\delta\tilde{\sigma}^2}}\;.
\end{equation}
Here we used the fact that extending the bounds in the integral~\eqref{eq:expec_tau^0} to 
$\pm\infty$ only produces exponentially small error terms. 
The Taylor coefficients of $q_1$ at $1$ are found to be 
$q_1(1)=1$, $q'_1(1)=-2$, and $q''_1(1)=\frac{21}{4}$,
which yields 
\begin{equation}
\expec{\tau^0 \indicator{\Omega_1}}=\delta+\frac32\delta\tilde{\sigma}^2+\Order{\delta^2\tilde{\sigma}^2}\,.
\end{equation}
Plugging in the value~\eqref{eq:def_sigmatilde} of $\tilde{\sigma}$ 
gives~\eqref{eq:bound_Etau0}.
A similar computation, in which only the expression for $q_\delta$ is changed, 
can be made for the second moment of $\tau^0$. One obtains 
\begin{equation}
\expec{(\tau^0)^2\indicator{\Omega_1}}
=\delta^2+\frac43 \delta^3 \sigma^2 +\Order{\delta^4\sigma^2}\;,
\end{equation}
which yields~\eqref{eq:bound_Vtau0}. 
The proof of~\eqref{eq:bound_Etau03} is similar. 
\end{proof}

Recall that the moments of $\tau^0$ have been computed after performing the time change defined 
in Section~\ref{ssec:scaling} and dropping the tildes. We now have to undo this time change. 
We first note that the event $\Omega_1$ becomes 
\begin{equation}
 \Omega_1 = \bigset{\tau^0(\omega) < \widehat T_{\max}}\;,
 \qquad 
 \text{where $\widehat T_{\max} = cg(T_{\max})$\;.}
\end{equation} 
The result on moments is then as follows, where we introduced the shorthand 
\begin{equation}
 r_0 = x_0^2 + y_0\;.
\end{equation} 

\begin{corollary}
\label{cor:moments_tau0}
For sufficiently small $\sigma$ and $\delta$, the moments of $\tau^0$ before the time change 
satisfy 
\begin{align}
 \bigexpec{\tau^0\indicator{\Omega_1}}
 &= T(x_0,y_0) + \frac12 \frac{T(x_0,y_0)^2\sigma^2}{r_0^2}
 + \biggOrder{\frac{\delta^3\sigma^2}{r_0^4}}\;, \\
 \variance\bigbrak{\tau^0\indicator{\Omega_1}}
 &= \frac13 \frac{T(x_0,y_0)^3\sigma^2}{r_0^2}
 + \biggOrder{\frac{\delta^4\sigma^2}{r_0^5}}\;, \\
 \bigexpec{(\tau^0)^3\indicator{\Omega_1}}
 &= T(x_0,y_0)^3 + \biggOrder{\frac{\delta^4\sigma^2}{r_0^6}}\;.
\end{align}
\end{corollary}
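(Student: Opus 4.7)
The plan is to undo the time change $\tau^0 = cg(\tilde\tau^0)$ from Proposition~\ref{prop:time_change}, by Taylor-expanding $cg$ around $\tilde t = \delta$ and inserting the moments of $\tilde\tau^0$ already obtained in Proposition~\ref{prop:moments_tau0} (where, after dropping tildes in Section~\ref{ssec:linearised}, the quantity called $\tau^0$ is in fact $\tilde\tau^0$, and the quantity called $\sigma$ is $\tilde\sigma$). The bulk of the work is bookkeeping the $\Order{\delta}$ corrections coming from $c$, $g'(\delta)$, $g''(\delta)$, and the $\Order{\delta}$ corrections in $\tilde\sigma^2$ relative to $\sigma^2 c^3$.

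First I would record the quantitative input from Proposition~\ref{prop:time_change}: the defining relation $cg(\delta) = T(x_0,y_0)$, the expansion $c = 1/r_0 + \Order{\delta}$, the fact that $g(\tilde t) = \tilde t + \Order{\tilde t^2}$, which yields $g'(\delta) = 1 + \Order{\delta}$ and $g''$ bounded on $[0,T_{\max}]$, and the identity $\tilde\sigma^2 = c^3\sigma^2(1+\Order{\delta}) = \sigma^2/r_0^3 + \Order{\sigma^2\delta/r_0^4}$. Setting $\eta = \tilde\tau^0 - \delta$, Proposition~\ref{prop:moments_tau0} translates to
\begin{equation*}
 \bigexpec{\eta\indicator{\Omega_1}} = \tfrac12 \delta^2 \tilde\sigma^2 + \Order{\delta^3\tilde\sigma^2}\;,\qquad
 \bigexpec{\eta^2\indicator{\Omega_1}} = \tfrac13 \delta^3 \tilde\sigma^2 + \Order{\delta^4\tilde\sigma^2}\;,
\end{equation*}
up to the exponentially small correction from $\fP(\Omega_1^c)$, together with $\bigexpec{\eta^3\indicator{\Omega_1}} = \Order{\delta^4\tilde\sigma^2}$ by Cauchy--Schwarz.

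Next I would apply Taylor's theorem,
\begin{equation*}
 g(\tilde\tau^0) = g(\delta) + g'(\delta)\eta + \tfrac12 g''(\delta) \eta^2 + \Order{\eta^3}\;,
\end{equation*}
multiply by $c$, and take expectations on $\Omega_1$. The zeroth order term gives exactly $T(x_0,y_0)$, the first order term contributes $cg'(\delta)\cdot \tfrac12\delta^2\tilde\sigma^2 = \tfrac12 c^4\sigma^2\delta^2(1+\Order{\delta}) = \tfrac12\delta^2\sigma^2/r_0^4 + \Order{\delta^3\sigma^2/r_0^4}$, and the higher-order corrections fit into the remainder thanks to the bounds on $\eta^2$ and $\eta^3$. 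Since $T(x_0,y_0)^2/r_0^2 = \delta^2/r_0^4 + \Order{\delta^3/r_0^5}$ by~\eqref{eq:T_delta}, this yields the claimed expansion of $\bigexpec{\tau^0\indicator{\Omega_1}}$. For the variance, I would use $\variance[cg(\tilde\tau^0)\indicator{\Omega_1}] = c^2g'(\delta)^2\variance[\tilde\tau^0\indicator{\Omega_1}] + R$, where the nonlinear contributions making up $R$ are of the form $c^2 g''(\delta)\,\cov(\eta,\eta^2)$ or higher, hence of order $\delta^4\tilde\sigma^4 + \delta^5\tilde\sigma^4$, well inside the claimed remainder; the main term evaluates to $\tfrac13 c^5\delta^3\sigma^2(1+\Order{\delta}) = \tfrac13 T(x_0,y_0)^3\sigma^2/r_0^2 + \Order{\delta^4\sigma^2/r_0^5}$. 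The third moment is handled similarly: $\bigexpec{(\tau^0)^3\indicator{\Omega_1}} = c^3\bigexpec{g(\tilde\tau^0)^3\indicator{\Omega_1}} = c^3g(\delta)^3 + 3c^3g(\delta)^2g'(\delta)\bigexpec{\eta\indicator{\Omega_1}} + \Order{\cdots} = T(x_0,y_0)^3 + \Order{\delta^4\sigma^2/r_0^6}$.

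There is no genuine obstacle, only the nuisance of keeping track of the various $\Order{\delta}$ corrections in $c$, $g'(\delta)$, $\tilde\sigma^2$ and in the remainders of Proposition~\ref{prop:moments_tau0}, and checking that they all merge into the single remainders stated in the corollary. The one point requiring minor care is that the $\tfrac12 cg''(\delta)\,\bigexpec{\eta^2\indicator{\Omega_1}}$ contribution to the expectation, of order $c\delta^3\tilde\sigma^2 \asymp \delta^3\sigma^2/r_0^4$, is precisely of the size of the claimed remainder and must not be mistakenly counted toward the leading correction.
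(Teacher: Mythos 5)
Your approach is essentially the same as the paper's: undo the time change by Taylor-expanding $cg$ around $\tilde t=\delta$, insert the moments of $\tilde\tau^0$ from Proposition~\ref{prop:moments_tau0} (recalling that the paper's $\tau^0,\sigma$ there are the tilde'd quantities), and track the $\Order{\delta}$ corrections in $c$, $g'$, and $\tilde\sigma^2 = \sigma^2 c^3(1+\Order{\delta})$. The paper writes the same computation via $g(\tilde\tau^0)-g(\delta)=\int_\delta^{\tilde\tau^0}g'(t)\,\6t$, but this is cosmetically different from your Taylor expansion.

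One small slip in your variance argument: you claim the cross contribution $c^2 g'(\delta)g''(\delta)\cov(\eta\indicator{\Omega_1},\eta^2\indicator{\Omega_1})$ has order $\delta^4\tilde\sigma^4+\delta^5\tilde\sigma^4$, which appears to come from estimating the covariance by the product of the two means. But the dominant part of that covariance is the uncentered third moment $\bigexpec{\eta^3\indicator{\Omega_1}}=\Order{\delta^4\tilde\sigma^2}$ (as you can verify directly from the three moments in Proposition~\ref{prop:moments_tau0}, or from the bound $\abs{\eta}\leqs\delta$ on $\Omega_1$ together with $\bigexpec{\eta^2\indicator{\Omega_1}}=\Order{\delta^3\tilde\sigma^2}$). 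After reinstating the $c^2$ prefactor, the cross term has order $c^2\delta^4\tilde\sigma^2 \sim \delta^4\sigma^2/r_0^5$, which is exactly the size of the stated remainder, not comfortably inside it. The conclusion is unaffected, but the phrase \lq\lq well inside the claimed remainder\rq\rq{} is misleading; this is precisely the term that determines the remainder's exponent.
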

\begin{proof}
Recall the relation $\tau^0 = cg(\tilde\tau^0)$, cf.~\eqref{eq:tau0_tildetau0},  
where $g(t) = t+\Order{t^2}$, cf.~\eqref{eq:g}, and 
the constant $c$ is given by~\eqref{eq:def_c}. 
To compute the expectation of $\tau^0$, we write 
\begin{align}
\bigexpec{\tau^0\indicator{\Omega_1}} - cg(\delta) \fP(\Omega_1) 
&= \bigexpec{(\tau^0 - cg(\delta))\indicator{\Omega_1}} \\
&= c\bigexpec{(g(\tilde\tau^0) - g(\delta))\indicator{\Omega_1}} \\
&= c\biggexpec{\int_\delta^{\tilde\tau_0}g'(t)\6t\,\indicator{\Omega_1}} \\
&= c\Bigexpec{\bigpar{\tilde\tau^0 - \delta + \Order{(\tilde\tau^0)^2 - \delta^2}}
\indicator{\Omega_1}} \\
&= c \biggbrak{\delta + \frac12\delta^2\tilde\sigma^2 + \Order{\delta^3\tilde\sigma^2}
- \delta\fP(\Omega_1)}
+ \Order{c\delta^3\tilde\sigma^2}\;,
\end{align}
where $\tilde\sigma$ has been defined in~\eqref{eq:def_sigma_tilde}. 
As mentioned before, we will show in Corollary~\ref{cor:Omega1} that $\fP(\Omega_1)$ 
is exponentially close to $1$. 
The result for the expectation follows from the definition~\eqref{eq:def_c} of $c$ 
and the relation~\eqref{eq:T_delta} between $\delta$ and $T(x_0,y_0)$. The other 
moments are computed in an analogous way. 
\end{proof}


\subsection{Nonlinear equation}
\label{ssec:nonlinear} 

Recall that the moments of $\tau^0$ that we computed correspond to the linearised equation 
\begin{equation}
\label{eq:u0} 
\6u^0_t = (\sigma W_t + 2x^{\det}(t) u^0_t)\6t\;.
\end{equation}
We now extend these results to the non-linear equation
\begin{equation}
\label{eq:ut} 
\6u_t = (\sigma W_t + 2x^{\det}(t) u_t + u_t^2) \6t\;.
\end{equation}
To do that, we show that $z_t=u_t - u_t^0$ remains small with high probability. 
We begin with a preparatory estimate. 

\begin{lemma}
\label{lem:u^0}
There exist constants $M_0, \kappa_0>0$ such that for any 
$H>0$ and $0\leqs t \leqs T_{\max}$, one has 
\begin{equation}
\label{eq:sup_u^0} 
\biggprob{\sup_{0\leqs s\leqs t} \abs{u_s^0} > H}
\leqs M_0Ht \exp\biggset{-\kappa_0\frac{H^2}{\sigma^2t^3}}\;.
\end{equation}
\end{lemma}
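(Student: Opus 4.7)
My plan is to exploit the explicit representation
$$u^0_t = \sigma \e^{2\alpha(t)} \int_0^t \e^{-2\alpha(s)} W_s \6s$$
derived in Section~\ref{ssec:scaling}, which shows that $u^0$ is a centred Gaussian process, and to derive the claimed tail estimate from standard maximal inequalities. Throughout, I would use that $\alpha(t)=\int_0^t x^{\det}(s)\6s$ is continuous and hence bounded on the compact interval $[0,T_{\max}]$, so that $\e^{\pm 2\alpha(t)}\leqs C_\alpha$ for a constant $C_\alpha>0$ depending only on $T_{\max}$ and on the admissible range of $x_0$.

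I would first apply pathwise integration by parts. Setting $\beta(t)=\int_0^t \e^{-2\alpha(s)}\6s$, so that $\beta(t)\leqs C_\alpha t$, this gives
$$u^0_t = \sigma \e^{2\alpha(t)}\bigpar{\beta(t) W_t - N_t}\;, \qquad N_t := \int_0^t \beta(s)\6W_s\;,$$
where $N$ is a continuous martingale with quadratic variation $\langle N\rangle_t\leqs \tfrac13 C_\alpha^2 t^3$. The purpose of this decomposition is to reduce the estimate to bounds on the suprema of a rescaled Brownian motion and a genuine martingale, on both of which standard exponential inequalities apply. Writing
$$\biggset{\sup_{0\leqs s\leqs t}\abs{u^0_s}>H} \subseteq \biggset{C_\alpha t\sup_{0\leqs s\leqs t}\abs{W_s}>\frac{H}{2C_\alpha\sigma}} \cup \biggset{\sup_{0\leqs s\leqs t}\abs{N_s}>\frac{H}{2C_\alpha\sigma}}\;,$$
the reflection principle for Brownian motion takes care of the first event, while Bernstein's inequality for continuous martingales (Doob applied to the exponential martingale $\exp\set{\lambda N_s-\tfrac12\lambda^2\langle N\rangle_s}$, optimised over $\lambda$) takes care of the second. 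Both contributions are of the form $C\e^{-cH^2/(\sigma^2 t^3)}$ with constants depending only on $C_\alpha$.

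Combining these pieces already yields a constant-prefactor estimate of the form $\prob{\sup_{s\leqs t}\abs{u^0_s}>H}\leqs M'_0\,\e^{-\kappa_0 H^2/(\sigma^2 t^3)}$, which is enough whenever $Ht\gtrsim 1$. To obtain the slightly sharper $Ht$ prefactor written in the statement, I would instead apply Mill's ratio to the pointwise Gaussian density of $u^0_s$ (whose variance is bounded by $C\sigma^2 s^3$ directly from the integral representation and the boundedness of $\alpha$), together with a union bound over a discrete grid of times in $[0,t]$. The grid mesh is dictated by the (rather favourable) continuity modulus of the smooth Gaussian process $u^0$, and the number of grid points is what produces the linear dependence on $Ht$. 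The only delicate point is the bookkeeping of the prefactor and grid; the probabilistic content is routine and poses no real obstacle.
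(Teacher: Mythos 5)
Your argument is correct in its essentials and takes a genuinely different route from the paper. The paper's own proof is a two-line citation: it invokes a general maximal inequality for Gaussian processes from the companion paper~\cite[Theorem~2.2]{BerBle2025}, whose hypothesis is a H\"older bound on increments of the form $\expec{(X_t-X_s)^2}\leqs G\abs{t-s}^2$; this is read off from the explicit covariance $\rho(s,t,t)=\tfrac13(t-s)^2(t+2s)$ of Lemma~\ref{lem:covariance}. (Strictly speaking the paper states the increment bound for the process $X$ rather than for $u^0$ itself, leaving the transfer to $u^0$ implicit; your use of the boundedness of $\e^{\pm2\alpha}$ on $[0,T_{\max}]$ makes that step explicit.) Your decomposition $u^0_t=\sigma\e^{2\alpha(t)}\bigpar{\beta(t)W_t-N_t}$ by pathwise integration by parts, followed by the reflection principle for $W$ and the exponential martingale inequality for $N$ with $\langle N\rangle_t\leqs\tfrac13 C_\alpha^2 t^3$, gives a self-contained proof of an estimate with the correct exponential rate $\e^{-\kappa_0 H^2/(\sigma^2 t^3)}$. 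Both routes are sound; the paper's buys brevity by outsourcing the maximal inequality, yours buys transparency at the cost of bookkeeping.

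Where I would stop you is the closing paragraph on recovering the $M_0 Ht$ prefactor. A bound of the literal form $M_0 Ht\,\e^{-\kappa_0 H^2/(\sigma^2 t^3)}$ cannot hold for \emph{all} $H>0$: for fixed $t>0$ the left-hand side of~\eqref{eq:sup_u^0} tends to $1$ as $H\to0$, since $\sup_{0\leqs s\leqs t}\abs{u^0_s}>0$ almost surely, while the right-hand side tends to $0$. The prefactor is therefore only meaningful when $H$ is not too small relative to $\sigma t^{3/2}$, and if you carry out the Mill's-ratio-plus-grid argument you will find exactly that restriction re-emerging from the interaction between the grid cardinality, the Gaussian tail prefactor $\sim\sigma t^{3/2}/H$, and the oscillation of $u^0$ over one mesh interval. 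Since the lemma is only ever applied in that regime, and since all downstream estimates (Corollary~\ref{cor:Omega1}, Lemma~\ref{lem:sup_z}, Proposition~\ref{prop:moments_tau}) use only the exponential rate, your constant-prefactor bound is perfectly adequate and chasing the algebraic factor would be wasted effort.
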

\begin{proof}
One can use the bound in~\cite[Theorem~2.2]{BerBle2025} on the supremum of 
a Gaussian process. This requires a bound of the form 
\begin{equation}
\bigexpec{(X_t - X_s)^2} \leqs G \abs{t-s}^\gamma
\end{equation} 
to hold. This is indeed the case with $\gamma = 2$, thanks to the computation of the 
covariance made in~\eqref{eq:rho_stt}.
\end{proof}

This result allows us to show that the probability of the event $\Omega_1$ introduced 
in~\eqref{eq:def_Omega1} is exponentially close to $1$.

\begin{corollary}
\label{cor:Omega1}
There exist constants $M_1, \kappa_1 > 0$ such that 
\begin{equation}
 \fP(\Omega_1^c) \leqs M_1 \frac{\delta^2}{r_0} \e^{-\kappa_1 r_0^3/(\delta\sigma^2)}\;.
\end{equation} 
\end{corollary}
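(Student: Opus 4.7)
The plan is to reduce the event $\Omega_1^c = \{\tau^0 \geqs \widehat T_{\max}\}$ to a one-dimensional Gaussian tail estimate, by using the time change of Proposition~\ref{prop:time_change} together with the explicit description of the linearised process.

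First, I will transfer the event to the rescaled time variables. Since $\tau^0 = cg(\tilde\tau^0)$ and $g$ is strictly increasing, $\Omega_1^c = \{\tilde\tau^0 \geqs T_{\max}\} = \{\tilde\tau^0 \geqs 2\delta\}$. By definition of $\tilde\tau^0$ and continuity of the paths $\tilde t \mapsto \tilde\sigma X_{\tilde t}$, together with the fact that $\tilde\sigma X_0 = 0 < \tilde d(0) = \delta$, the event $\{\tilde\tau^0 \geqs 2\delta\}$ forces $\tilde\sigma X_{\tilde t} \leqs \tilde d(\tilde t)$ for every $\tilde t \in [0,2\delta]$; in particular,
\begin{equation}
\Omega_1^c \subset \bigl\{\tilde\sigma X_{2\delta} \leqs \tilde d(2\delta)\bigr\}\;.
\end{equation}

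Next, I will quantify $\tilde d(2\delta)$. By Proposition~\ref{prop:time_change} we have $\tilde d(\delta) = 0$ and $\tilde d'(\tilde t) = -1 + \Order{\delta}$ throughout $[0,2\delta]$, so integrating from $\delta$ to $2\delta$ yields $\tilde d(2\delta) = -\delta(1+\Order{\delta})$. Combining this with $X_{2\delta} \sim \cN(0,\rho(2\delta,2\delta)) = \cN(0,8\delta^3/3)$ from Lemma~\ref{lem:covariance}, a standard Gaussian tail bound gives
\begin{equation}
\fP(\Omega_1^c) \leqs \fP\Bigl(\tilde\sigma X_{2\delta} \leqs -\delta(1+\Order{\delta})\Bigr)
\leqs \frac{M\tilde\sigma\sqrt{\delta}}{1}\,
\exp\biggset{-\frac{3(1+\Order{\delta})}{16\,\delta\tilde\sigma^2}}\;,
\end{equation}
for some absolute constant $M>0$.

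The last step is to translate back to the original parameters. From~\eqref{eq:def_sigma_tilde} and \eqref{eq:def_c} we have $\tilde\sigma^2 = \sigma^2 c^3(1+\Order{\delta}) = \sigma^2/r_0^3 \cdot (1 + \Order{\delta})$, so the exponent becomes $-3(1+\Order{\delta})\,r_0^3/(16\,\delta\sigma^2)$ and the prefactor is of order $\sigma\sqrt{\delta}/r_0^{3/2}$. Under the standing assumption $\sigma^2 \ll \delta$ and $r_0$ bounded below, a polynomial prefactor of the form $\sigma\sqrt{\delta}/r_0^{3/2}$ is readily dominated by $\delta^2/r_0$ at the expense of a harmless decrease of the constant in front of $r_0^3/(\delta\sigma^2)$: choosing any $\kappa_1 < 3/16$ gives
\begin{equation}
\frac{\sigma\sqrt{\delta}}{r_0^{3/2}}\exp\Bigl(-\tfrac{3}{16}\tfrac{r_0^3}{\delta\sigma^2}\Bigr)
\leqs M_1\frac{\delta^2}{r_0}\exp\Bigl(-\kappa_1\tfrac{r_0^3}{\delta\sigma^2}\Bigr)
\end{equation}
for some $M_1>0$, which is the claimed bound.

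The only mild technical point is verifying cleanly that the error $\Order{\delta}$ in $\tilde d(2\delta)$ and in the identification $\tilde\sigma^2 \sim \sigma^2/r_0^3$ really can be folded into a single constant $\kappa_1$ in the exponent; this is standard once one fixes a sufficiently small $\delta_0$ and works with $\delta \leqs \delta_0$. No probabilistic estimate beyond the Gaussian tail bound and the exact variance formula is needed, since the conclusion is only about the marginal of $X$ at the endpoint $\tilde t = 2\delta$, not about the full path.
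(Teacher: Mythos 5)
Your argument is correct, and it takes a genuinely different route from the paper. The paper proves this corollary as a direct application of Lemma~\ref{lem:u^0}: with $t = \widehat T_{\max} \sim \delta/r_0$ and $H$ of order $\delta$, the path-level supremum bound $\prob{\sup_{s\leqs t}|u^0_s| > H} \leqs M_0 H t \exp\{-\kappa_0 H^2/(\sigma^2 t^3)\}$ immediately yields a prefactor of order $Ht \sim \delta^2/r_0$ and the correct exponent, since confining $u^0$ to $[-H,H]$ forces a crossing of the decreasing boundary $d(t)$ before $\widehat T_{\max}$. You instead work in the rescaled coordinates of Proposition~\ref{prop:time_change}, observe that $\Omega_1^c \subset \{\tilde\sigma X_{2\delta} \leqs \tilde d(2\delta)\}$ by continuity, and apply a one-dimensional Gaussian tail bound to the marginal of $X$ at a single time. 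This is more elementary (no Gaussian-process concentration inequality needed), and the inclusion-at-the-endpoint observation is clean. The one place where the paper's choice is slightly more economical is the prefactor: Lemma~\ref{lem:u^0} hands you $\delta^2/r_0$ directly, whereas your one-dimensional tail bound gives $\tilde\sigma\sqrt{\delta} \sim \sigma\sqrt{\delta}/r_0^{3/2}$, and the ratio $\sigma/(\delta^{3/2}r_0^{1/2})$ is \emph{not} bounded as $\delta\to 0$ under $\sigma^2 \ll \delta$ (it can be as large as $1/\delta$); you correctly note that this must be absorbed by shrinking $\kappa_1$, but it is worth being explicit that the absorption really does rely on the exponential term and not on the prefactor being small by itself. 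Otherwise the argument is sound, and the constant identifications $\tilde\sigma^2 \sim \sigma^2/r_0^3$ and $\rho(2\delta,2\delta) = 8\delta^3/3$ are used correctly.
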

\begin{proof}
It suffices to apply Lemma~\ref{lem:u^0} with $t = \widehat T_{\max}$, which scales like 
$\delta/r_0$, and an $H$ of order $\delta$, chosen in 
such a way that $\abs{u^0_t} \leqs H$ for all $t\leqs \widehat T_{\max}$ 
implies $\tau^0 < \widehat T_{\max}$. 
\end{proof}

We are now able to bound the probability that the difference $z_t = u_t - u^0_t$ 
becomes large. 

\begin{lemma}
\label{lem:sup_z}
For $\delta$ small enough, there exist constants $\kappa, M, h_0>0$ such that for any 
$h\in(0,h_0]$ and $0\leqs t \leqs \widehat T_{\max}$, one has 
\begin{equation} 
\biggprob{\sup_{0\leqs s\leqs t} \abs{z_s} > h}
\leqs M\exp\biggset{-\kappa\frac{h}{\sigma^2t^4}}\;.
\end{equation} 
\end{lemma}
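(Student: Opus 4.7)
The plan is to analyse the difference $z_t = u_t - u_t^0$, which by subtraction of~\eqref{eq:ut} and~\eqref{eq:u0} satisfies the random ODE
\begin{equation}
\6z_t = \bigpar{2x^{\det}(t) z_t + u_t^2} \6t\;, \qquad z_0 = 0\;.
\end{equation}
Since this is a linear ODE in $z_t$ with inhomogeneity $u_t^2$, variation of constants with the integrating factor $\e^{-2\alpha(t)}$ (cf.~\eqref{eq:alpha}) gives
\begin{equation}
z_t = \e^{2\alpha(t)} \int_0^t \e^{-2\alpha(s)} u_s^2 \6s\;.
\end{equation}
For $t \leqs \widehat T_{\max}$ of order $\delta$ and small $\delta$, the factor $\alpha(t)$ is bounded, so there is a constant $C_\alpha$ such that $\abs{z_t} \leqs C_\alpha \int_0^t u_s^2 \6s$.

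Next I would run a bootstrap argument combined with the Gaussian estimate of Lemma~\ref{lem:u^0}. Fix $h\in(0,h_0]$ and set the stopping time $\theta = \inf\bigset{s\geqs 0 : \abs{z_s} > h} \wedge t$, and define the event
\begin{equation}
\Omega_H = \biggset{\sup_{0\leqs s\leqs t} \abs{u^0_s} \leqs H}\;,
\end{equation}
where $H>0$ will be chosen below. On $\Omega_H$, for $s \leqs \theta$ one has $\abs{u_s} \leqs H + h$ and therefore
\begin{equation}
\abs{z_s} \leqs C_\alpha (H+h)^2 s \leqs 2C_\alpha (H^2 + h^2) t\;.
\end{equation}
Now choose $h_0$ small enough so that $2C_\alpha h_0 \widehat T_{\max} \leqs 1/4$ and set $H^2 = h/(8 C_\alpha t)$. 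Then the right-hand side is bounded by $h/4 + h/4 = h/2 < h$, which precludes $z$ from reaching the threshold $h$ continuously. This forces $\theta = t$, so on $\Omega_H$ we have $\sup_{0\leqs s\leqs t} \abs{z_s} \leqs h$. Consequently
\begin{equation}
\biggprob{\sup_{0\leqs s\leqs t} \abs{z_s} > h} \leqs \fP(\Omega_H^c)\;.
\end{equation}

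Applying Lemma~\ref{lem:u^0} with the chosen $H$ yields
\begin{equation}
\fP(\Omega_H^c) \leqs M_0 H t \exp\biggset{-\kappa_0\frac{H^2}{\sigma^2 t^3}}
= M_0 \sqrt{\frac{h t}{8C_\alpha}} \exp\biggset{-\frac{\kappa_0}{8 C_\alpha}\frac{h}{\sigma^2 t^4}}\;.
\end{equation}
Since $h\leqs h_0$ and $t\leqs \widehat T_{\max}$, the prefactor $\sqrt{ht/(8C_\alpha)}$ is uniformly bounded, and by a standard trick (absorbing an algebraic prefactor into the exponential at the price of slightly decreasing the exponential constant) this can be rewritten in the form $M\exp\bigset{-\kappa h/(\sigma^2 t^4)}$ claimed in the lemma.

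The only real subtlety is the bootstrap step: one must argue that, on $\Omega_H$, the inequality $\abs{z_s}\leqs h$ propagates from $s=0$ all the way to $s=t$. Because $z$ has continuous paths and $\abs{z_0}=0$, the stopping-time formulation ensures that as long as the self-consistent bound $2C_\alpha(H^2+h^2)t < h$ is verified, no excursion above $h$ can occur. All other steps are routine: the variation-of-constants representation is elementary, $\alpha$ is bounded on the relevant time interval, and the final exponential-versus-polynomial comparison is standard.
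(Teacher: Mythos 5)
Your proof is correct and gives the stated bound with the right exponent $h/(\sigma^2 t^4)$, but it organises the argument differently from the paper. The paper works directly with the integral form of the equation for $z_t$, writing $z_t = r_1(t)+r_2(t)+r_3(t)+r_4(t)$ with $r_1=\int_0^t (u^0_s)^2\,\6s$, $r_2=2\int_0^t u^0_s z_s\,\6s$, $r_3=\int_0^t z_s^2\,\6s$, $r_4=2\int_0^t x^{\det}(s) z_s\,\6s$, splits the threshold as $h=h_1+h_2+h_3+h_4$, bounds the first two contributions via Lemma~\ref{lem:u^0} (with $h_1$ of order $h$ and $h_2$ of order $h^{3/2}t^{1/2}$, which gives the same exponent $h/(\sigma^2t^4)$), and chooses $h_3=th^2$, $h_4$ of order $th$ so that the corresponding probabilities vanish before the stopping time. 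You instead remove the linear term $2x^{\det}(t)z_t$ exactly by variation of constants, so that $\abs{z_t}\leqs C_\alpha\int_0^t u_s^2\,\6s$, and then run a single self-consistent threshold argument: stopping time at level $h$, the event $\bigset{\sup_{s\leqs t}\abs{u^0_s}\leqs H}$ with $H^2=h/(8C_\alpha t)$, and one application of Lemma~\ref{lem:u^0}. Both proofs rest on the same key ingredient (Lemma~\ref{lem:u^0}) and the same bootstrap-at-level-$h$ idea; your version trades the four-way splitting and union bound for solving the linear ODE, which is legitimate and slightly cleaner here since $x^{\det}$ is bounded on a slice of length $\Order{\delta}$, so $C_\alpha$ is uniform, while the paper's version avoids the integrating factor altogether. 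One shared (and harmless) liberty: Lemma~\ref{lem:u^0} is stated for $t\leqs T_{\max}$ but applied for $t\leqs \widehat T_{\max}$; the paper takes the same step in Corollary~\ref{cor:Omega1} and in its own proof of this lemma, and the underlying Gaussian estimate is insensitive to this, so it is not a gap in your argument.
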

\begin{proof}
Taking the difference of~\eqref{eq:ut} and~\eqref{eq:u0}, 
we find that $z_t$ satisfies
\begin{equation}
\6z_t = \bigbrak{(u^0_t)^2 + 2(x^{\det}_t+u^0_t)z_t + z_t^2} \6t\;.
\end{equation}
We introduce the stopping time 
\begin{equation}
 \tau = \inf\bigsetsuch{s>0}{\abs{z_s}>h}\;.
\end{equation} 
For any decomposition $h = h_1 + h_2 + h_3 + h_4$, we can write 
\begin{equation}
\biggprob{\sup_{0\leqs s\leqs t} \abs{z_s} > h}
\leqs\sum_{j=1}^4\biggprob{\sup_{0\leqs s\leqs t\wedge\tau}\abs{r_j(t)}>h_j}
=: \sum_{j=1}^4 P_j\;,
\end{equation}
where 
\begin{equation}
r_1(t) = \int_0^t (u_s^0)^2\6s\;,  \quad
r_2(t) = 2\int_0^tu_s^0z_s\6s\;,   \quad
r_3(t) = \int_0^tz_s^2\6s\;,   \quad
r_4(t) = 2\int_0^tx^{\det}_sz_s\6s\;.
\end{equation}
Using Lemma~\ref{lem:u^0}, we obtain the existence of constants 
$\kappa_1, \kappa_2, M_1, M_2 > 0$ such that  
\begin{equation}
P_1 \leqs M_1 \exp\biggset{-\kappa_1 \frac{h_1}{\sigma^2t^4}}\;, \qquad 
P_2 \leqs M_2 \exp\biggset{-\kappa_2 \frac{h_2^2}{h^2\sigma^2t^5}}\;.
\end{equation}
Choosing $h_1$ of order $h$ and $h_2$ of order $h^{3/2}t^{1/2}$ yields comparable $P_1$ and $P_2$. 
Taking $h_3 = th^2$ and $h_4$ of order $th$ yields $P_3 = P_4 = 0$. 
Since $t\leqs \widehat T_{\max} = \Order{\delta}$, such a choice of the $h_i$ is possible. 
Combining the bounds yields the result. 
\end{proof}

We have now everything needed to extend the moment estimates in Corollary~\ref{cor:moments_tau0}
to $\tau$. It will be convenient to define the event 
\begin{equation}
\label{eq:def_Omega2} 
 \Omega_2 
 = \biggsetsuch{\omega}{\sup_{0 \leqs t \leqs \widehat T_{\max}} \abs{u_t(\omega)} 
 \leqs a \widehat T_{\max}}\;,
\end{equation} 
where the constant $a$ will be chosen in a suitable way. 

\begin{proposition}
\label{prop:moments_tau}
There exist constants $M, \kappa, a > 0$ 
such that for sufficiently small $\sigma$ and $\delta$, 
\begin{equation}
\label{eq:P_Omega2c} 
 \fP(\Omega_2^c) \leqs M\e^{-\kappa r_0/(\delta\sigma^2)}\;.
\end{equation} 
Furthermore, the moments of $\tau$ satisfy 
\begin{align}
 \bigexpec{\tau\indicator{\Omega_2}}
 &= T(x_0,y_0) + \frac12 \frac{T(x_0,y_0)^2\sigma^2}{r_0^2}
 + \biggOrder{\frac{\delta^3\sigma^2}{r_0^4}}
 + \bigOrder{\e^{-\kappa r_0/\delta}}\;, \\
 \variance\bigbrak{\tau\indicator{\Omega_2}}
 &= \frac13 \frac{T(x_0,y_0)^3\sigma^2}{r_0^2}
 + \biggOrder{\frac{\delta^4\sigma^2}{r_0^5}}
 + \bigOrder{\e^{-\kappa r_0/\delta}}\;, \\
 \bigexpec{\tau^3\indicator{\Omega_2}}
 &= T(x_0,y_0)^3 + \biggOrder{\frac{\delta^4\sigma^2}{r_0^6}}
 + \bigOrder{\e^{-\kappa r_0/\delta}}\;.
\end{align}
\end{proposition}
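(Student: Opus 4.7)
The plan is to transfer the moment estimates for $\tau^0$ from Corollary~\ref{cor:moments_tau0} over to $\tau$, by controlling the difference $z_t = u_t - u^0_t$ via Lemma~\ref{lem:sup_z} and exploiting the transversal crossing of the boundary $d$ at $\tau^0$.

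First I would establish the probability bound~\eqref{eq:P_Omega2c}. Writing $\abs{u_t} \leqs \abs{u^0_t} + \abs{z_t}$, a uniform control of $u$ on $[0,\widehat T_{\max}]$ reduces to uniform controls of $u^0$ and of $z$. Lemma~\ref{lem:u^0} applied with $t = \widehat T_{\max}$ of order $\delta/r_0$ and $H$ of order $\widehat T_{\max}$ produces a bound of the form $\exp\bigset{-\kappa r_0/(\delta \sigma^2)}$; Lemma~\ref{lem:sup_z} applied with $h$ of the same order (admissible for $\delta$ small enough that $h \leqs h_0$) yields an exponential bound with the more favourable exponent $r_0^3/(\delta^3 \sigma^2)$. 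Summing these two contributions, for a suitable choice of the constant $a$ in~\eqref{eq:def_Omega2}, proves~\eqref{eq:P_Omega2c}.

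Next I would relate $\tau$ to $\tau^0$ on the event $\Omega_1 \cap \Omega_2$ by writing $\tau = \tau^0 + \Delta$. Since $u^0_{\tau^0} = d(\tau^0)$ and $u^0_\tau + z_\tau = d(\tau)$, a first-order Taylor expansion in $\Delta$, combined with the estimate $\abs{d'(t)} = r_0 + \Order{\delta}$ on $[0,\widehat T_{\max}]$ coming from~\eqref{eq:d_of_t}, yields
\begin{equation}
 \Delta = \frac{z_\tau}{r_0}\bigpar{1 + \Order{\delta}}\,.
\end{equation}
Integrating the tail estimate of Lemma~\ref{lem:sup_z} at $t = \widehat T_{\max}$ then furnishes the moment bound $\bigexpec{\abs{z_\tau}^j \indicator{\Omega_2}} = \Order{(\sigma^2 \delta^4/r_0^4)^j}$, and hence $\bigexpec{\abs{\Delta}^j \indicator{\Omega_1 \cap \Omega_2}} = \Order{(\sigma^2 \delta^4/r_0^5)^j}$.

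For each $k \in \set{1,2,3}$ I would then decompose
\begin{equation}
 \bigexpec{\tau^k \indicator{\Omega_2}}
 = \bigexpec{(\tau^0)^k \indicator{\Omega_1}}
 - \bigexpec{(\tau^0)^k \indicator{\Omega_1 \setminus \Omega_2}}
 + \bigexpec{\bigpar{\tau^k - (\tau^0)^k} \indicator{\Omega_1 \cap \Omega_2}}
 + \bigexpec{\tau^k \indicator{\Omega_2 \setminus \Omega_1}}\,.
\end{equation}
The first expectation is supplied by Corollary~\ref{cor:moments_tau0}; the second and fourth are bounded respectively by $\widehat T_{\max}^k \fP(\Omega_2^c)$ and $\widehat T_{\max}^k \fP(\Omega_1^c)$, both of the required exponential form thanks to~\eqref{eq:P_Omega2c} and Corollary~\ref{cor:Omega1}. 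Expanding $\tau^k - (\tau^0)^k$ in powers of $\Delta$ and inserting the moment bound on $\abs{\Delta}$ above shows that the third expectation contributes at strictly lower order than the $\sigma^2$ corrections already retained in Corollary~\ref{cor:moments_tau0}, and therefore gets absorbed in the announced error terms. The main obstacle is making the Taylor expansion of $u^0 - d$ around $\tau^0$ rigorous uniformly in the scaling parameters $\delta$, $\sigma$ and $r_0$: one must check that on $\Omega_1 \cap \Omega_2$ the increment $\abs{\Delta}$ is small enough that the quadratic Taylor remainder is dominated by the linear term of order $r_0 \abs{\Delta}$, which in turn follows from the uniform bounds on $\sup \abs{u^0}$, $\sup \abs{(u^0)'}$ and $\sup \abs{d''}$ available on the good event.
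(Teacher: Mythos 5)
Your treatment of the probability bound \eqref{eq:P_Omega2c} is fine and essentially the paper's argument (triangle inequality, Lemma~\ref{lem:u^0} for $u^0$ and Lemma~\ref{lem:sup_z} for $z$, with $h$ of order $\widehat T_{\max}$ rather than the paper's $\widehat T_{\max}^3$ — both choices give the stated exponent). The moment transfer, however, contains one step that is not justified by the events you have put in play. Your mean-value/implicit-function relation $\Delta = \tau-\tau^0 \approx -z_\tau/\bigl((u^0)'(\theta)-d'(\theta)\bigr)$ requires a lower bound of order $r_0$ on the derivative of $u^0-d$ on the interval between $\tau^0$ and $\tau$, hence an upper bound on $\sup\abs{(u^0)'_t} = \sup\abs{\sigma W_t + 2x^{\det}(t)u^0_t}$. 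The term $2x^{\det}u^0$ is controlled on your good events, but $\sigma\sup_{t\leqs\widehat T_{\max}}\abs{W_t}$ is \emph{not} controlled by $\Omega_1$, $\Omega_2$ or the $z$-bound of Lemma~\ref{lem:sup_z}: none of these events constrains the driving Brownian path itself, so your claim that $\sup\abs{(u^0)'}$ is \lq\lq available on the good event\rq\rq\ is false as stated. The fix is easy — add the event $\bigl\{\sigma\sup_{t\leqs\widehat T_{\max}}\abs{W_t}\leqs b\widehat T_{\max}\bigr\}$, whose complement is exponentially small by a Bernstein-type bound; this is precisely the extra ingredient the paper only introduces later, in $\Omega_4$ of \eqref{eq:def_Omega4} — but without it the transversality step, and hence your bound on $\abs{\Delta}$, is not proved. (Two smaller points: the sign should be $\Delta\approx -z_\tau/r_0$, which is harmless since you only use $\abs{\Delta}$; and Lemma~\ref{lem:sup_z} only covers $h\leqs h_0$, so the moment bound $\bigexpec{\abs{z_\tau}^j\indicator{\Omega_2}}=\Order{(\sigma^2\delta^4/r_0^4)^j}$ needs the regime $\abs{z}>h_0$ to be split off and absorbed into an exponentially small term, e.g.\ via Cauchy--Schwarz.)

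It is worth noting that the paper avoids this issue altogether by a different device: on the event $\Omega_3(h)=\{\sup\abs{z_t}\leqs h\}$ one has the monotone sandwich $\tau^0(-h)\leqs\tau\leqs\tau^0(h)$, where $\tau^0(\pm h)$ are the first-passage times of the \emph{linear} process to the shifted boundaries $d(t)\pm h$; Corollary~\ref{cor:moments_tau0} is then re-applied to these shifted problems, the shift contributing a term $h/r_0$ which is pushed into the error by choosing $h=\sigma^2\widehat T_{\max}^3$. This needs no derivative control and no Taylor expansion of the crossing time. Your route, once the Brownian-sup event is added, does give the correct orders (the $\Delta$-contributions $\Order{\sigma^2\delta^4/r_0^5}$, etc., are indeed dominated by the stated error terms since $\delta\ll r_0$), and your decomposition over $\Omega_1\setminus\Omega_2$, $\Omega_2\setminus\Omega_1$ with Corollaries~\ref{cor:moments_tau0} and~\ref{cor:Omega1} is correct; so the gap is a missing hypothesis/event rather than a wrong idea, but as written the key estimate on $\Delta$ does not follow from what you have established.
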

\begin{proof}
Given $h\in[0,a\widehat T_{\max}]$, we introduce the events 
\begin{align}
\Omega_0(h) 
&= \biggsetsuch{\omega}{\sup_{0\leqs t\leqs \widehat T_{\max}} 
\abs{u^0_t(\omega)} \leqs a \widehat T_{\max} - h}\;, \\
\Omega_3(h) 
&= \biggsetsuch{\omega}{\sup_{0\leqs t\leqs \widehat T_{\max}} 
\abs{u_t(\omega) - u^0_t(\omega)} \leqs h}\;.
\end{align}
It follows from Lemmas~\ref{lem:u^0} and~\ref{lem:sup_z} that 
\begin{align}
 \fP(\Omega_0(h)^c) 
 &\leqs M_0 a\widehat T_{\max}^2
 \exp\biggset{-\kappa_0\frac{(a\widehat T_{\max} - h)^2}{\sigma^2\widehat T_{\max}^3}}\;, \\
\fP(\Omega_3(h)^c) 
&\leqs M_1\exp\biggset{-\kappa_1\frac{h}{\sigma^2\widehat T_{\max}^4}}
\end{align}
for some $M_0, M_1, \kappa_0, \kappa_1 > 0$. 
Since $\Omega_2 \subset \Omega_0(h) \cap \Omega_3(h)$ for any admissible $h$,  
we have $\fP(\Omega_2^c) \leqs \fP(\Omega_0(h)^c) + \fP(\Omega_3(h)^c)$, and the 
bound~\eqref{eq:P_Omega2c} follows by choosing 
\begin{equation}
 h = \widehat T_{\max}^3\;.
\end{equation} 
Regarding the expectations, we observe that if $\omega \in \Omega_2\cap\Omega_3(h)$, 
then $\abs{u^0_t(\omega)} \leqs a\widehat T_{\max} + h$
for all $t\in[0,\widehat T_{\max}]$. If $a$ is a sufficiently small constant
(independent of $\delta$ and $h$), this implies $\tau^0 < \widehat T_{\max}$. 
Therefore, 
\begin{equation}
\label{eq:Omega_123} 
 \Omega_2 \cap \Omega_3(h) \subset \Omega_1\;.
\end{equation} 
For any $h_1\in\R$, we write 
\begin{equation}
\tau^0(h_1) = \inf\setsuch{t>0}{u^0_t = d(t) + h_1}\;.
\end{equation} 
Then by construction, on $\Omega_3(h)$ one has 
\begin{equation}
\tau^0(-h) \leqs \tau \leqs \tau^0(h)\;.
\end{equation} 
Using~\eqref{eq:Omega_123}, this yields the upper bound 
\begin{align}
\expec{\tau\indicator{\Omega_2}} 
&= \expec{\tau\indicator{\Omega_2}\indicator{\Omega_3(h)}} 
+ \expec{\tau\indicator{\Omega_2}\indicator{\Omega_3(h)^c}} \\
&\leqs \expec{\tau^0(h)\indicator{\Omega_2 \cap \Omega_3(h)}} 
+ \widehat T_{\max} \fP(\Omega_3(h)^c)\\
&\leqs \expec{\tau^0(h)\indicator{\Omega_1}} 
+ \widehat T_{\max} \fP(\Omega_3(h)^c)\;,
\end{align}
and the lower bound 
\begin{align}
\expec{\tau\indicator{\Omega_2}} 
&\geqs \expec{\tau^0(-h)\indicator{\Omega_2 \cap \Omega_3(h)}} \\
&= \expec{\tau^0(-h)\indicator{\Omega_1}} 
- \expec{\tau^0(-h)\indicator{\Omega_1 \cap (\Omega_2\cap\Omega_3(h))^c}}\\
&\geqs \expec{\tau^0(-h)\indicator{\Omega_1}}  
- \widehat T_{\max} \bigbrak{\fP(\Omega_2^c) + \fP(\Omega_3(h)^c))}\;.
\end{align}
Corollary~\ref{cor:moments_tau0} implies that for any $h$ of order $\widehat T_{\max}$, 
\begin{equation}
\expec{\tau^0(h)\indicator{\Omega_1}} = T(x_0,y_0) + \frac{h + \Order{\delta h}}{r_0} 
+ \frac12 \frac{T(x_0,y_0)^2\sigma^2}{r_0^2}
+ \biggOrder{\frac{\delta^3\sigma^2}{r_0^4}}\;.
\end{equation} 
The choice 
\begin{equation}
h = \sigma^2 \widehat T_{\max}^3
\end{equation} 
ensures that the term of order $h$ can be incorporated in the error term,
since $\widehat T_{\max}$ scales like $\delta/r_0$. 
Furthermore, with this choice of $h$, $\fP(\Omega_3(h))^c$ has order 
$\e^{-\kappa/\widehat T_{\max}}$. 
This yields the expression for $\expec{\tau\indicator{\Omega_2}}$. 
The other moments are treated in a similar way. 
\end{proof}

\subsection{Moments of the exit location}
\label{ssec:slice_moments} 

Having obtained the moments of $\tau$, we can now proceed to deriving expressions for 
the moments of $y_\tau$, which is the main result of this entire section. 
Here it will be useful to work with the event 
\begin{equation}
\label{eq:def_Omega4} 
 \Omega_4 = \Omega_2 
 \cap \biggsetsuch{\omega}{\sup_{0\leqs t\leqs \widehat T_{\max}} \sigma\abs{W_t(\omega)} 
 \leqs b\widehat T_{\max}}
\end{equation} 
where $b>0$ will be chosen later on. 

\begin{proposition}
\label{prop:moments_ytau} 
There exist constants $M, \kappa > 0$, depending only on $b$, such that 
for sufficiently small $\sigma$ and $\delta$, 
\begin{equation}
 \label{eq:bound_Omega4}
 \fP(\Omega_4^c) \leqs M\e^{-\kappa\delta/(r_0\sigma^2)}\;.
\end{equation} 
Furthermore, the first three moments of $(y_\tau - y_0 - T(x_0,y_0))\indicator{\Omega_4}$ satisfy 
\begin{align}
\bigexpec{\bigpar{y_\tau - y_0 - T(x_0,y_0)}\indicator{\Omega_4}}
&= \frac12 \frac{T(x_0,y_0)^2\sigma^2}{r_0^2}
 + \biggOrder{\frac{\delta^3\sigma^2}{r_0^4}}
 + \bigOrder{\e^{-\kappa r_0/\delta}}\;, 
\label{eq:expect_ytau} 
\\
\bigexpec{\bigpar{y_\tau - y_0 - T(x_0,y_0)}^2\indicator{\Omega_4}}
&= T(x_0,y_0) \sigma^2-\frac{T(x_0,y_0)^2}{r_0}\sigma^2
+\biggOrder{\frac{\delta^3\sigma^2}{r_0^4}}
+ \bigOrder{\e^{-\kappa r_0/\delta}}\;, \\
\bigexpec{\bigpar{y_\tau - y_0 - T(x_0,y_0)}^3\indicator{\Omega_4}}
&= \Order{\sigma^3} + \biggOrder{\frac{\delta^3}{r_0^3}}\;.
\end{align}
\end{proposition}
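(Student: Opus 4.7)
The fundamental relation is
\begin{equation*}
y_\tau - y_0 - T(x_0,y_0) = (\tau - T(x_0,y_0)) + \sigma W_\tau\;,
\end{equation*}
obtained by integrating $\6y_t = \6t + \sigma \6W_t$. For the probability bound, I would write $\Omega_4^c \subset \Omega_2^c \cup \set{\sup_{t\leqs \widehat T_{\max}} \sigma\abs{W_t} > b\widehat T_{\max}}$, apply Proposition~\ref{prop:moments_tau} to the first event, and use the reflection-principle estimate $\fP(\sup_{s\leqs T}\abs{W_s} > H) \leqs 4\e^{-H^2/(2T)}$ with $T=\widehat T_{\max} \asymp \delta/r_0$ and $H = b\widehat T_{\max}/\sigma$ for the second; the two exponents combine into a single bound of the advertised form.

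For the first moment, since $\Omega_4\subset\Omega_2$ and $\fP(\Omega_2\setminus\Omega_4)\leqs \fP(\Omega_4^c)$ is exponentially small, the indicator $\indicator{\Omega_4}$ can be replaced by $\indicator{\Omega_2}$ in $\expec{\tau\indicator{\cdot}}$ at exponentially small cost, and Proposition~\ref{prop:moments_tau} yields the leading term $\tfrac12 T^2\sigma^2/r_0^2$. The contribution $\sigma\expec{W_\tau\indicator{\Omega_4}}$ is handled by optional stopping: since $\tau\wedge \widehat T_{\max}$ is a bounded stopping time and $W_{\tau\wedge \widehat T_{\max}} = W_\tau$ on $\Omega_4$, one has $\expec{W_\tau\indicator{\Omega_4}} = -\expec{W_{\tau\wedge \widehat T_{\max}}\indicator{\Omega_4^c}}$, which by Cauchy--Schwarz and Doob is bounded by $\sqrt{\widehat T_{\max}\fP(\Omega_4^c)}$ and is therefore exponentially small.

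For the second moment, I would expand
\begin{equation*}
(y_\tau - y_0 - T)^2 = (\tau-T)^2 + 2\sigma(\tau-T)W_\tau + \sigma^2 W_\tau^2\;.
\end{equation*}
Reconstructing $\expec{\tau^2\indicator{\Omega_2}}$ from the variance and mean in Proposition~\ref{prop:moments_tau} yields $\expec{(\tau-T)^2\indicator{\Omega_4}} = \tfrac13 T^3\sigma^2/r_0^2 + \Order{\delta^4\sigma^2/r_0^5}$. Optional stopping on the martingale $W_t^2-t$ gives $\sigma^2\expec{W_\tau^2\indicator{\Omega_4}} = \sigma^2 \expec{\tau\indicator{\Omega_4}} + (\text{exp.\ small}) = \sigma^2 T + \Order{\sigma^4 T^2/r_0^2}$. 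For the cross term, I would use the leading-order linearised relation $u^0_{\tau^0} = d(\tau^0)$ together with the Taylor expansion $d(t) = -r_0(t-T)+\Order{(t-T)^2}$ to obtain $\tau - T = -u^0_T/r_0 + \text{lower order}$, with $u^0_T = \sigma\e^{2\alpha(T)}\int_0^T \e^{-2\alpha(s)}W_s\6s$. Combining with $W_\tau = W_T + (W_\tau - W_T)$ and the covariance identity $\expec{W_sW_T} = s$ gives
\begin{equation*}
\expec{(\tau-T)W_\tau\indicator{\Omega_4}}
= -\frac{\sigma}{r_0}\int_0^T \e^{2\alpha(T)-2\alpha(s)} s\6s + (\text{corrections})
= -\frac{\sigma T^2}{2r_0} + \Order{\sigma\delta^3/r_0^2}\;,
\end{equation*}
producing the announced correction $-T^2\sigma^2/r_0$. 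The corrections from $u_t - u_t^0$ (controlled by Lemma~\ref{lem:sup_z}), from $W_\tau-W_T$ (controlled by Brownian modulus of continuity applied on an interval of random length $\abs{\tau-T} = \Order{\sigma T/r_0}$ and combined with Lemma~\ref{lem:u^0}), and from higher-order Taylor remainders in $d(t)$, all contribute errors absorbed into $\Order{\delta^3\sigma^2/r_0^4}$.

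Finally, I would expand the cube $((\tau-T)+\sigma W_\tau)^3$ into four terms. The first, $\expec{(\tau-T)^3\indicator{\Omega_4}}$, follows from Proposition~\ref{prop:moments_tau} by combining the first three moments; the $T^3$, $T^2\cdot T$ and $T\cdot T^2$ pieces telescope exactly and the remainder is of order $\delta^4\sigma^2/r_0^6$. The three remaining terms are bounded by Hölder's inequality combined with the moments bounds of Proposition~\ref{prop:moments_tau} and the Burkholder--Davis--Gundy inequality for $\expec{\abs{W_\tau}^k\indicator{\Omega_4}}$, yielding a total contribution of order $\sigma^3 + \delta^3/r_0^3$. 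The main obstacle throughout is the cross term in the second moment: rigorously justifying the three simultaneous approximations $\tau^0\approx \tau$, $W_\tau\approx W_T$, and the linear Taylor expansion of $d$, while keeping all error terms strictly subdominant to $T^2\sigma^2/r_0$, is what makes the proof technical, and is precisely where the bounds of Sections~\ref{ssec:linearised} and~\ref{ssec:nonlinear} come into play.
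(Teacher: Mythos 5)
Your decomposition $y_\tau - y_0 - T(x_0,y_0) = (\tau - T(x_0,y_0)) + \sigma W_\tau$, the union-bound-plus-Bernstein estimate on $\fP(\Omega_4^c)$, the optional-stopping treatment of $\expec{W_\tau\indicator{\Omega_2}}$ and $\expec{W_\tau^2\indicator{\Omega_2}}$ via the martingales $W_t$ and $W_t^2-t$, and the extraction of $\tau$-moments from Proposition~\ref{prop:moments_tau} all coincide with the paper's proof. The one genuine divergence is the cross term. The paper treats $\expec{\tau W_\tau\indicator{\Omega_2}}$ through the pathwise identity $\tau W_\tau = X_\tau + \int_0^\tau s\,\6W_s$, discards the stochastic-integral martingale by optional stopping, and then expresses $\sigma\expec{X_\tau\indicator{\Omega_2}}$ in terms of $T(x_0,y_0)$ and $\expec{\tau\indicator{\Omega_2}}$ by re-invoking the hitting-time machinery of Section~\ref{ssec:linearised_moments}. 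You instead approximate $\tau - T(x_0,y_0) \approx -u^0_T/r_0$ (from $d(\tau)\approx d'(T)(\tau-T)$ with $d'(T)=-r_f=-(r_0+\Order{\delta})$) and $W_\tau\approx W_T$, and compute the Gaussian covariance $\expec{u^0_T W_T}$ directly. Both routes reduce to $\expec{X_T W_T}=T^2/2$ and produce the same correction $-T(x_0,y_0)^2\sigma^2/r_0$. The paper's route has the advantage that error control is delegated entirely to estimates already proved (martingale bounds and the moments of $\tau$); yours has the advantage of making the $r_f$-normalisation that fixes the power of $r_0$ in the correction fully explicit, where the paper leaves it implicit in the phrase \lq\lq $u^0_\tau = d(\tau)$ for a suitable boundary $d$\rq\rq. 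The three simultaneous approximations you flag at the end (linearisation $u\approx u^0$, $W_\tau\approx W_T$, and the linear Taylor expansion of $d$) are indeed the crux, but the required error controls are all supplied by Lemmas~\ref{lem:u^0} and~\ref{lem:sup_z} and the moment bounds of Proposition~\ref{prop:moments_tau}, so the proposal is sound.
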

\begin{proof}
Using either the Bernstein-type estimate~\cite[Lemma~B.1.3]{BGbook},
or~\cite[Theorem~2.2]{BerBle2025}, applied this time to Brownian motion, 
we obtain 
\begin{equation}
 \biggprob{\sup_{0\leqs t\leqs \widehat T_{\max}} \sigma\abs{W_t} 
 > b \widehat T_{\max}}
 \leqs M_0 \e^{-\kappa_0 b^2\widehat T_{\max}^2/(\sigma^2\widehat T_{\max})}
 = M_0 \e^{-\kappa_0 b^2\widehat T_{\max}/\sigma^2}\;.
\end{equation} 
Since $\widehat T_{\max}$ scales like $\delta/r_0$, this yields~\eqref{eq:bound_Omega4}, 
considering the bound~\eqref{eq:P_Omega2c} on $\fP(\Omega_2^c)$. 

Regarding the moments, since $y_t = y_0 + t + \sigma W_t$, we have immediately
\begin{equation}
\bigexpec{y_\tau\indicator{\Omega_2}} 
= y_0\fP(\Omega_2) + \bigexpec{\tau\indicator{\Omega_2}} 
+ \sigma \bigexpec{W_\tau\indicator{\Omega_2}}\;.
\end{equation} 
Since $\tau\wedge\widehat T_{\max}$ is a stopping time, and $(W_t)_{t\geqs0}$ is a martingale, we have 
\begin{align}
 0
 &= \bigexpec{W_{\tau\wedge\widehat T_{\max}}} \\
 &= \bigexpec{W_{\tau\wedge\widehat T_{\max}}\indicator{\Omega_2}}
 + \bigexpec{W_{\tau\wedge\widehat T_{\max}}\indicator{\Omega_2^c}} \\
 &= \bigexpec{W_{\tau}\indicator{\Omega_2}}
 + \bigexpec{W_{\tau\wedge\widehat T_{\max}}\indicator{\Omega_2^c}}\;,
\end{align}
since $\omega\in\Omega_2$ implies $\tau(\omega) \leqs \widehat T_{\max}$. 
The Cauchy--Schwarz inequality implies 
\begin{equation}
 \bigabs{\bigexpec{W_{\tau\wedge\widehat T_{\max}}\indicator{\Omega_2^c}}} 
 \leqs \sqrt{\bigexpec{W_{\tau\wedge\widehat T_{\max}}^2}}
 \sqrt{\fP(\Omega_2^c)}\;.
\end{equation} 
Since $(W_t^2)_{t\geqs0}$ is a submartingale, 
\begin{equation}
\bigexpec{W_{\tau\wedge\widehat T_{\max}}^2}
\leqs \bigexpec{W_{\widehat T_{\max}}^2} 
= \widehat T_{\max} = \Order{c\delta}\;,
\end{equation} 
and the bound~\eqref{eq:expect_ytau} on the expectation, with $\indicator{\Omega_2}$ instead of 
$\indicator{\Omega_4}$, follows from~\eqref{eq:P_Omega2c}. To extend the bound to 
$\indicator{\Omega_4}$, we write
\begin{equation}
 \bigexpec{y_\tau\indicator{\Omega_4}}
 = \bigexpec{y_\tau\indicator{\Omega_2}}
 + \bigexpec{y_\tau\indicator{\Omega_4\cap\Omega_2^c}}\;.
\end{equation} 
Since $y_\tau$ is bounded on $\Omega_4$, the second term on the right-hand side is bounded 
by a constant times $\fP(\Omega_2^c)$, which yields a negligible exponentially small error term.  

To estimate the second moment, we first compute
\begin{equation}
\label{eq:moment2-1} 
\bigexpec{(y_\tau-y_0)^2\indicator{\Omega_2}}
= \bigexpec{\tau^2\indicator{\Omega_2}} 
+ 2\sigma\bigexpec{\tau W_\tau \indicator{\Omega_2}} 
+ \sigma^2\bigexpec{W_\tau^2\indicator{\Omega_2}}\;.
\end{equation}
The term $\expec{\tau^2\indicator{\Omega_2}}$ has been estimated in 
Proposition~\ref{prop:moments_tau}.
We consider next the term $\expec{W_\tau^2\indicator{\Omega_2}}$.
Here we use the fact that the process $(M_t)_{t\geqs0}$ given by 
\begin{equation}
M_t = W_t^2 - t
\end{equation} 
is a martingale. By a similar stopping argument as before, we get 
\begin{equation}
 \bigexpec{W_\tau^2\indicator{\Omega_2}}
 = \bigexpec{\tau\indicator{\Omega_2}}
 + \BigOrder{\sqrt{\bigexpec{M_{\tau\wedge\widehat T_{\max}}^2}}
 \sqrt{\fP(\Omega_2^c)}\,}\;.
\end{equation} 
To compute $\expec{\tau W_\tau \indicator{\Omega_2}}$, we observe that 
since $\6\,(tW_t) = t\6W_t + W_t\6t$, we have the integration-by-parts
relation
\begin{equation}
tW_t = X_t + \int_0^t s\6W_s\;,
\end{equation} 
where $X_t$ is defined in~\eqref{eq:def_Xt}. The stochastic integral being a martingale, 
that we denote $M_t'$, we obtain 
\begin{equation}
\bigexpec{(\tau W_\tau)\indicator{\Omega_2}} 
= \bigexpec{X_\tau\indicator{\Omega_2}}
- \bigexpec{M_{\tau\wedge\widehat T_{\max}}'\indicator{\Omega_2^c}}\;.
\end{equation} 
Here we can reuse the results from Section~\ref{ssec:linearised_moments}, since $\sigma X_t$ 
is equal to $u^0_t$ in the particular case $\alpha(t) = 0$, and $u^0_\tau = d(\tau)$ 
for a suitable boundary $d$. This yields 
\begin{equation}
 \sigma\bigexpec{X_\tau\indicator{\Omega_2}} 
 = T(x_0,y_0)-\bigexpec{\tau\indicator{\Omega_2}}
 = -\frac12 \frac{T(x_0,y_0)^2\sigma^2}{r_0^2}
 + \biggOrder{\frac{\delta^3\sigma^2}{r_0^4}}\;.
\end{equation} 
Combining the different estimates, and using a similar argument to replace 
$\indicator{\Omega_2}$ by $\indicator{\Omega_4}$ 
yields the expression for the second moment. 
The proof of the third moment is similar. 
\end{proof}


\section{Combining the slices}
\label{sec:slices} 


In this section, we provide the proof of Theorem~\ref{thm:main}. 
We fix an initial condition $(\xin,\yin)$ satisfying $\xin^2+\yin > 0$, and 
a final $y$-value $\yf < y^\star$. Let $(x^{\det}(t), y^{\det}(t))$ be 
the deterministic solution~\eqref{eq:xdet} with this initial condition, 
and let $\xf = x^{\det}(\yf)$. 


\subsection{Partition}
\label{ssec:partition} 

\begin{figure}
\begin{center}
\begin{tikzpicture}
[>=stealth',point/.style={circle,inner sep=0.035cm,fill=white,draw},
encircle/.style={circle,inner sep=0.07cm,draw},
x=4.5cm,y=2.5cm,declare function={f(\x) = 0.7*(1 + tanh(\x)) - \x^2/(1+exp(2*\x));}]

\draw[->,semithick] (0,-0.6) -> (0,1.8);
\draw[->,semithick] (-0.7,0) -> (1.5,0);

\draw[blue,thick,-,smooth,domain=-0.7:0.7,samples=75,/pgf/fpu,
/pgf/fpu/output format=fixed] plot ({\x}, {-(\x)^2});

\draw[violet,thick,-,smooth,domain=-0.7:1.4,samples=75,/pgf/fpu,
/pgf/fpu/output format=fixed] plot ({\x}, {f(\x)});


\draw[semithick, blue] (-0.7,1.45) -- (1.4,1.45);
\node[point] at (0.0,1.45) {};
\node[] at (-0.07,1.55) {$y^\star$};

\pgfmathsetmacro{\xx}{0.2}
\draw[semithick, violet] ({\xx},0) -- ({\xx},{f(\xx)}) -- (0,{f(\xx)}); 
\node[point] at ({\xx},0) {};
\node[point] at ({\xx},{f(\xx)}) {};
\node[point] at (0,{f(\xx)}) {};
\node[violet] at ({\xx},-0.15) {$x_{n-1}$};
\node[violet] at (-0.1,{f(\xx)}) {$y^{\det}_{n-1}$};

\pgfmathsetmacro{\yy}{0.53}
\draw[semithick, violet] ({\yy},0) -- ({\yy},{f(\yy)}) -- (0,{f(\yy)}); 
\draw[semithick, dashed, violet] (-0.25,{f(\yy)}) -- (0,{f(\yy)}); 
\node[point] at ({\yy},0) {};
\node[point] at ({\yy},{f(\yy)}) {};
\node[point] at (0,{f(\yy)}) {};
\node[violet] at ({\yy},-0.1) {$x_n$};
\node[violet] at (-0.3,{f(\yy)}) {$y^{\det}_n$};

\pgfmathsetmacro{\zz}{1.0}
\draw[semithick, violet] ({\zz},0) -- ({\zz},{f(\zz)}) -- (0,{f(\zz)}); 
\node[point] at ({\zz},0) {};
\node[point] at ({\zz},{f(\zz)}) {};
\node[point] at (0,{f(\zz)}) {};
\node[violet] at ({\zz},-0.1) {$x_{n+1}$};
\node[violet] at (-0.1,{f(\zz)}) {$y^{\det}_{n+1}$};

\node[violet] at (-0.65,0.5) {$(x^{\det}(t),y^{\det}(t))$};
\node[blue] at (-0.3,-0.3) {$y = -x^2$};

\node[] at (1.4,0.1) {$x$};
\node[] at (0.07,1.65) {$y$};

\end{tikzpicture}
\end{center}
\vspace{-2mm}
\caption{Definition of the partition of $[\xin,\xf]$.
The difference $y^{\det}_{n+1} - y^{\det}_n$ is constant, equal to $\gamma$.
The difference $\delta_n = x_{n+1} - x_n$ is increasing with $n$.}
\label{fig:partition} 
\end{figure}
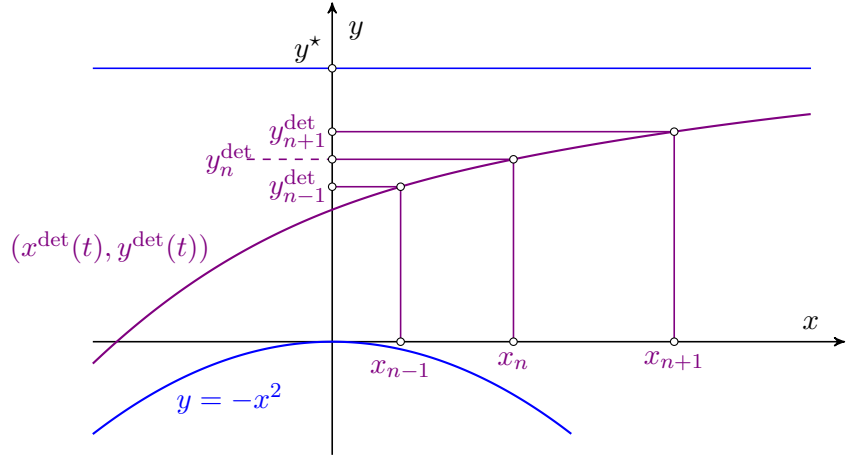

We first choose a partition $(x_n)_{0\leqs n\leqs N}$ of $[\xin,\xf]$ that 
is \lq\lq equidistant in $y$\rq\rq\ (see Figure~\ref{fig:partition}). 
Given an integer $N\geqs1$, that will be taken large, the partition is defined by 
\begin{equation}
 x_n = x^{\det}(n\gamma)\;, 
 \qquad 
 \gamma = \frac{\yf - \yin}{N}\;.
\end{equation}
To lighten notations, we write 
\begin{equation}
 y^{\det}_n = y^{\det}(n\gamma) = \yin + n\gamma\;.
\end{equation} 
We use the superscript ${}^{\det}$ for $y^{\det}_n$ to distinguish it from 
general starting points $y_n$ on the section $\set{x = x_n}$. Since such a 
distinction is not needed for $x_n$, we omit the superscript in that case. 
Note that since 
\begin{equation}
 x_{n+1} = x_n + \gamma(x_n^2 + y^{\det}_n) + \Order{\gamma^2}\;,
\end{equation} 
we have 
\begin{equation}
 \delta_n := x_{n+1} - x_n 
 = \gamma(x_n^2 + y^{\det}_n) + \Order{\gamma^2}\;.
\end{equation} 
Given $y_n > -x_n^2$ and $0 \leqs n < m \leqs N$, we use the notation
\begin{equation}
 T_{n,m}(y_n)
 = T(x_n, y_n; x_m)
\end{equation} 
for the deterministic time needed for the solution starting in $(x_n,y_n)$ to 
reach the line $\set{x = x_m}$ (the notation $T(x_n, y_n; x_m)$ has been introduced 
in~\eqref{eq:def_T}). In particular, we have 
\begin{equation}
 T_{n,m}(y^{\det}_n)
 = y^{\det}_m - y^{\det}_n
 = (m-n)\gamma\;.
\end{equation} 
In other words, the deterministic reference solution $(x^{\det}(t), y^{\det}(t))$ 
takes always the same time $\gamma$ to move from a line $\set{x = x_n}$ to 
the next line $\set{x = x_{n+1}}$. 

In order to work with bounded values of $y_n$, we fix a strictly increasing function 
$h:[\xin,\xf]\to(0,\infty)$, and introduce the increasing 
family of sets 
\begin{equation}
 B_n 
 = \bigsetsuch{(x,y)\in\R^2}{\xin\leqs x\leqs x_n, 
 \abs{y-y^{\det}(x)} \leqs 2h(x)}\;,
\end{equation} 
where $y^{\det}(x)$ stands for the deterministic reference solution parametrized 
by $x$ (the reciprocal of the function $x^{\det}(y)$ defined in~\eqref{eq:xdet_y}). 
We also introduce the intervals 
\begin{equation}
 I_n = [y^{\det}_n - h(x_n), y^{\det}_n + h(x_n)]
\end{equation} 
(see Figure~\ref{fig:Bn}). 
Denoting by $\tau_n$ the first-exit time of the sample path $(x_t,y_t)_{t\geqs0}$ 
from $B_n$, we define the events 
\begin{equation}
 \Omegahat_n 
 = \bigset{x_{\tau_n} = x_n, y_{\tau_n} \in I_n}\;.
\end{equation} 
These events correspond to the sample path remaining between the curves 
$y = y^{\det}(x) - 2h(x)$ and $y = y^{\det}(x) + 2h(x)$, and exiting the set 
$B_n$ through its right-hand boundary, at a point $(x_n, y_{\tau_n})$ such that 
$y_{\tau_n}$ lies in $I_n$. The reason for the factor $2$ in the definition of 
$B_n$ is as follows. Since
\begin{equation}
 T(x_n, y_n; x_{n+1}) = \gamma + \Order{\gamma^2}\;,
\end{equation} 
the deterministic solution starting in $(x_n,y_n)$ with $y_n \in I_n$ will 
cross the next section $\set{x = x_{n+1}}$ at height $y(T)$ satisfying
\begin{equation}
 y(T) = y_n + \gamma + \Order{\gamma^2}\;,
\end{equation} 
so that 
\begin{equation}
 \bigabs{y(T) - y^{\det}_{n+1}}
 \leqs \bigabs{y_n - y^{\det}_n} + \Order{\gamma^2}
 \leqs h(x_n) + \Order{\gamma^2}\;.
\end{equation} 
Since $h$ is strictly increasing, we have $h(x_{n+1}) > h(x_n) + b\gamma$ 
for some $b>0$, and therefore $y(T) \in I_{n+1}$ for $\gamma$ small enough. 
This guarantees that the stochastic sample path starting in $(x_n,y_n)$ will, 
with high probability, reach the line $\set{x = x_{n+1}}$ at a point in $I_{n+1}$.
More precisely, we have the following bound.

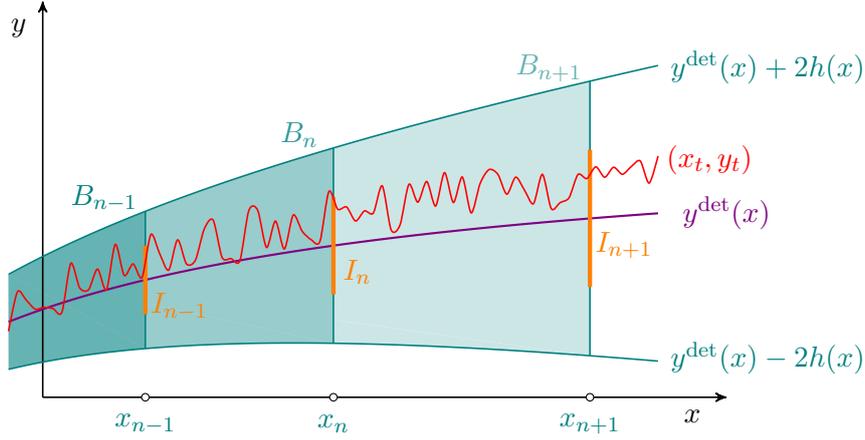
\begin{figure}
\begin{center}
\begin{tikzpicture}
[>=stealth',point/.style={circle,inner sep=0.035cm,fill=white,draw},
encircle/.style={circle,inner sep=0.07cm,draw},
x=4.5cm,y=3.5cm,declare function={y(\x) = 1 - 1/(1.5+\x); h(\x) = 0.2*(1+\x);}]

\pgfmathsetmacro{\xx}{0.3}
\pgfmathsetmacro{\yy}{0.85}
\pgfmathsetmacro{\zz}{1.6}

\path[fill=teal!20,semithick,-,smooth,domain=-0.1:\zz,samples=30,/pgf/fpu,
/pgf/fpu/output format=fixed] 
plot (\x, {y(\x) + h(\x)}) -- (\zz, {y(\zz) - h(\zz)});
\path[fill=teal!20,semithick,-,smooth,domain=\zz:-0.1,samples=30,/pgf/fpu,
/pgf/fpu/output format=fixed] 
plot (\x, {y(\x) - h(\x)}) -- (-0.1, {y(-0.1) + h(-0.1)});

\path[fill=teal!40,semithick,-,smooth,domain=-0.1:\yy,samples=30,/pgf/fpu,
/pgf/fpu/output format=fixed] 
plot (\x, {y(\x) + h(\x)}) -- (\yy, {y(\yy) - h(\yy)});
\path[fill=teal!40,semithick,-,smooth,domain=\yy:-0.1,samples=30,/pgf/fpu,
/pgf/fpu/output format=fixed] 
plot (\x, {y(\x) - h(\x)}) -- (-0.1, {y(-0.1) + h(-0.1)});

\path[fill=teal!60,semithick,-,smooth,domain=-0.1:\xx,samples=30,/pgf/fpu,
/pgf/fpu/output format=fixed] 
plot (\x, {y(\x) + h(\x)}) -- (\xx, {y(\xx) - h(\xx)});
\path[fill=teal!60,semithick,-,smooth,domain=\xx:-0.1,samples=30,/pgf/fpu,
/pgf/fpu/output format=fixed] 
plot (\x, {y(\x) - h(\x)}) -- (-0.1, {y(-0.1) + h(-0.1)});

\draw[->,semithick] (0,0) -> (2,0);
\draw[->,semithick] (0,0) -> (0,1.5);

\draw[violet,thick,-,smooth,domain=-0.1:1.8,samples=75,/pgf/fpu,
/pgf/fpu/output format=fixed] plot ({\x}, {y(\x)});

\draw[teal,semithick,-,smooth,domain=-0.1:1.8,samples=75,/pgf/fpu,
/pgf/fpu/output format=fixed] plot ({\x}, {y(\x) + h(\x)});

\draw[teal,semithick,-,smooth,domain=-0.1:1.8,samples=75,/pgf/fpu,
/pgf/fpu/output format=fixed] plot ({\x}, {y(\x) - h(\x)});


\draw[semithick, teal] ({\xx},{y(\xx) - h(\xx)}) -- ({\xx},{y(\xx) + h(\xx)}); 
\draw[semithick, teal] ({\yy},{y(\yy) - h(\yy)}) -- ({\yy},{y(\yy) + h(\yy)}); 
\draw[semithick, teal] ({\zz},{y(\zz) - h(\zz)}) -- ({\zz},{y(\zz) + h(\zz)}); 


\draw[ultra thick, orange] ({\xx},{y(\xx) - 0.5*h(\xx)}) -- ({\xx},{y(\xx) + 0.5*h(\xx)}); 
\draw[ultra thick, orange] ({\yy},{y(\yy) - 0.5*h(\yy)}) -- ({\yy},{y(\yy) + 0.5*h(\yy)}); 
\draw[ultra thick, orange] ({\zz},{y(\zz) - 0.5*h(\zz)}) -- ({\zz},{y(\zz) + 0.5*h(\zz)});

\draw[red,semithick,-,smooth,domain=-0.1:1.8,samples=74,/pgf/fpu,
/pgf/fpu/output format=fixed] plot ({\x}, {y(\x) + 0.3*h(\x) + 0.1*rand});

\node[point] at ({\xx},0) {};
\node[teal] at ({\xx},-0.1) {$x_{n-1}$};
\node[point] at ({\yy},0) {};
\node[teal] at ({\yy},-0.1) {$x_n$};
\node[point] at ({\zz},0) {};
\node[teal] at ({\zz},-0.1) {$x_{n+1}$};

\node[teal!100] at ({\xx - 0.12}, {y(\xx) + h(\xx) + 0.05}) {$B_{n-1}$};
\node[teal!80] at ({\yy - 0.1}, {y(\yy) + h(\yy) + 0.05}) {$B_n$};
\node[teal!60] at ({\zz - 0.12}, {y(\zz) + h(\zz) + 0.05}) {$B_{n+1}$};

\node[orange] at ({\xx + 0.1}, {y(\xx) - 0.1}) {$I_{n-1}$};
\node[orange] at ({\yy + 0.07}, {y(\yy) - 0.1}) {$I_n$};
\node[orange] at ({\zz + 0.1}, {y(\zz) - 0.1}) {$I_{n+1}$};

\node[violet] at (2.0,0.7) {$y^{\det}(x)$};
\node[teal] at (2.12,1.25) {$y^{\det}(x) + 2h(x)$};
\node[teal] at (2.12,0.15) {$y^{\det}(x) - 2h(x)$};
\node[red] at (1.95,0.9) {$(x_t,y_t)$};
 
\node[] at (1.9,-0.07) {$x$};
\node[] at (-0.07,1.4) {$y$};

\end{tikzpicture}
\end{center}
\vspace{-2mm}
\caption{Definition of the sets $B_n$. Each set extends all the way to 
$x = \xin$, so the sets are nested ($B_{n-1} \subset B_n \subset B_{n+1}$).
The sample path $(x_t,y_t)_t$ is an element of $\Omegahat_{n-1,n+1}$.}
\label{fig:Bn} 
\end{figure}

\begin{proposition}
\label{prop:Omega_n}
There exist constants $\kappa, M > 0$ such that for $\gamma$ small enough and 
any $y_n\in I_n$, one has 
\begin{equation}
\label{eq:bound_P_Omega_n} 
 \fP^{x_n,y_n}(\Omegahat_{n+1}^c)
 \leqs M\e^{-\kappa\gamma/\sigma^2}\;.
\end{equation} 
\end{proposition}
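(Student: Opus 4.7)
The plan is to apply the single-slice analysis of Section~\ref{sec:one_slice} directly, identifying the slice $[x_n,x_{n+1}]$ with the generic slice $[x_0,x_0+\delta]$ treated there. Specifically, I would set $(x_0,y_0)=(x_n,y_n)$ with $y_n\in I_n$, so that $r_n:=x_n^2+y_n$ is bounded below by a positive constant uniformly in $n$ (because $y_n^{\det}$ lies above the critical manifold and $h$ is bounded on the compact interval $[\xin,\xf]$), while $\delta_n=x_{n+1}-x_n=\gamma(x_n^2+y_n^{\det})+\Order{\gamma^2}$. A direct computation gives $\delta_n/r_n=\gamma+\Order{\gamma^2}$, so the bound $\fP(\Omega_4^c)\leqs M\e^{-\kappa\delta_n/(r_n\sigma^2)}$ from Proposition~\ref{prop:moments_ytau} immediately translates into the exponential bound $M'\e^{-\kappa\gamma/\sigma^2}$ announced in~\eqref{eq:bound_P_Omega_n}.

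The substance of the proof is then to establish the deterministic inclusion $\Omega_4\subset\Omegahat_{n+1}$ for $\gamma$ small enough. On $\Omega_4$ one simultaneously controls $\sup_{t\leqs\widehat T_{\max}}\abs{u_t}\leqs a\widehat T_{\max}$ and $\sup_{t\leqs\widehat T_{\max}}\sigma\abs{W_t}\leqs b\widehat T_{\max}$, where $\widehat T_{\max}=\Order{\gamma}$. I would then verify two things. First, that the sample path stays inside $B_{n+1}$ up to the hitting time $\tau$ of $\set{x=x_{n+1}}$: decomposing
\begin{equation*}
 y_t-y^{\det}(x_t)=\bigpar{y_t-y^{\det}(t)}+\bigpar{y^{\det}(t)-y^{\det}(x_t)}\;,
\end{equation*}
the first summand is exactly $\sigma W_t$, bounded by $b\widehat T_{\max}$, while a first-order Taylor expansion along the deterministic curve gives $\abs{y^{\det}(t)-y^{\det}(x_t)}\leqs C\abs{u_t}\leqs Ca\widehat T_{\max}$, since $\partial_x y^{\det}$ is bounded on the relevant $x$-range. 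Hence $\abs{y_t-y^{\det}(x_t)}=\Order{\gamma}$, which is strictly less than $2h(x_t)\geqs 2h(\xin)>0$ for $\gamma$ small enough. A similar argument using $u_t$ bounded rules out the path crossing the line $\set{x=\xin}$, so the only way it can leave $B_{n+1}$ is through the right-hand side.

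Second, I would check that the exit location belongs to $I_{n+1}$. Writing $y_\tau=y_n+\tau+\sigma W_\tau$ and using $T_{n,n+1}(y_n)=\gamma+\Order{\gamma(y_n-y_n^{\det})}+\Order{\gamma^2}$, together with $\abs{\tau-T_{n,n+1}(y_n)}=\Order{\gamma^2}$ on $\Omega_4$ (as provided by Proposition~\ref{prop:moments_tau}), one obtains
\begin{equation*}
 \bigabs{y_\tau-y^{\det}_{n+1}}\leqs\bigabs{y_n-y^{\det}_n}+b\widehat T_{\max}+\Order{\gamma^2}\leqs h(x_n)+C'\gamma\;.
\end{equation*}
Since $h$ is strictly increasing with derivative bounded below by some $h'_{\min}>0$ on the compact interval $[\xin,\xf]$, one has $h(x_{n+1})-h(x_n)\geqs h'_{\min}\delta_n\geqs c\gamma$, and the inequality $\abs{y_\tau-y^{\det}_{n+1}}\leqs h(x_{n+1})$ follows for $\gamma$ sufficiently small. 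This yields $\Omega_4\subset\Omegahat_{n+1}$ and concludes the proof.

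The only genuinely delicate point -- more a matter of bookkeeping than a true obstacle -- is aligning the stochastic deviation $\Order{\gamma}$ with the growth rate of $h$: one needs $h(x_{n+1})-h(x_n)$ to dominate $C'\gamma$ uniformly in $n$, which is where the strict monotonicity of $h$ (together with its bounded derivative on a compact interval) plays its role. The freedom in choosing $h$, already built into the set-up of Section~\ref{ssec:partition}, is precisely what makes this step go through.
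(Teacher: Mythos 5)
Your proposal is correct and follows essentially the same route as the paper: the paper's own (much terser) proof likewise observes that the construction of $B_{n+1}$ and $I_{n+1}$ forces the inclusion $\Omega_4\subset\Omegahat_{n+1}$ and then invokes the bound~\eqref{eq:bound_Omega4} of Proposition~\ref{prop:moments_ytau} together with the fact that $\delta_n/r_n$ scales like $\gamma$; you simply spell out the inclusion in more detail. One small inaccuracy: the pathwise estimate $\abs{\tau-T_{n,n+1}(y_n)}=\Order{\gamma^2}$ is not supplied by Proposition~\ref{prop:moments_tau} (which only gives moment bounds), but it does follow directly on $\Omega_4$ from $\tau\leqs\widehat T_{\max}$ and a Gronwall-type bound on $u_t$ using $\sigma\abs{W_t}\leqs b\widehat T_{\max}$, so this does not affect the validity of the argument.
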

\begin{proof}
Recall that $y_t = y^{\det}_t + \sigma W_t$, and that we have constructed the sets $B_n$ and 
$I_n$ in such a way that the deterministic solution remains at distance $b\gamma$ 
from the boundaries of $B_n$ and $I_{n+1}$ for some $b>0$. Therefore, the set 
$\Omega_4 = \Omega_{4,n}$ introduced in~\eqref{eq:def_Omega4} is included in $\Omegahat_n$, 
for this choice of $b$. Hence, the result follows from~\eqref{eq:bound_Omega4}
and the fact that $\delta/r_0$ scales like $\gamma$. 
\end{proof}

An immediate consequence of this result is that we can bound the probability 
of events 
\begin{equation}
 \Omegahat_{n,m}
 = \bigcap_{p=n+1}^m \Omegahat_p\;, 
 \qquad 
 0 \leqs n < m \leqs N\;,
\end{equation} 
which means that sample paths remain between the curves 
$y = y^{\det}(x) - 2h(x)$ and $y = y^{\det}(x) + 2h(x)$ at least until $x_t$
reaches $x_m$, and cross each line $\set{x = x_p}$ at height $y_{\tau_p} 
\in I_p$. See Figure~\ref{fig:Bn} for an example.

\begin{corollary}
\label{cor:Omega_n}
For any $y_n \in I_n$ and $0 \leqs n < m \leqs N$, one has 
\begin{equation}
 \fP^{x_n,y_n}\bigpar{\Omegahat_{n,m}^c} \leqs (m-n) \e^{-\kappa\gamma/\sigma^2}\;.
\end{equation} 
\end{corollary}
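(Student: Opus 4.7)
The plan is to combine a union bound with an induction on $m-n$, where the inductive step is powered by the strong Markov property at the hitting times $\tau_p$. First I would write
\begin{equation*}
 \Omegahat_{n,m}^c = \bigcup_{p=n+1}^m \Omegahat_p^c\;,
\end{equation*}
so that by a union bound,
\begin{equation*}
 \fP^{x_n,y_n}\bigpar{\Omegahat_{n,m}^c}
 \leqs \sum_{p=n+1}^m \fP^{x_n,y_n}\bigpar{\Omegahat_p^c}\;.
\end{equation*}
The issue with estimating each term directly is that Proposition~\ref{prop:Omega_n} gives a bound starting from a point in $I_{p-1}$, while here the process starts at $(x_n,y_n)$, so we need an intermediate step to ensure that the process reaches the section $\set{x = x_{p-1}}$ at a point in $I_{p-1}$ before we can invoke the single-slice bound.

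The clean way is to proceed by induction on $m-n$, the base case $m = n+1$ being exactly Proposition~\ref{prop:Omega_n}. For the inductive step, write
\begin{equation*}
 \fP^{x_n,y_n}\bigpar{\Omegahat_{n,m}^c}
 \leqs \fP^{x_n,y_n}\bigpar{\Omegahat_{n,m-1}^c}
 + \fP^{x_n,y_n}\bigpar{\Omegahat_{n,m-1} \cap \Omegahat_m^c}\;.
\end{equation*}
On $\Omegahat_{n,m-1}$, the sample path hits the section $\set{x = x_{m-1}}$ at time $\tau_{m-1}$ at a height $y_{\tau_{m-1}} \in I_{m-1}$. Applying the strong Markov property at $\tau_{m-1}$, the second term equals
\begin{equation*}
 \bigexpec^{x_n,y_n}\Bigbrak{\indicator{\Omegahat_{n,m-1}}
 \fP^{x_{m-1}, y_{\tau_{m-1}}}\bigpar{\Omegahat_m^c}}\;,
\end{equation*}
and since $y_{\tau_{m-1}} \in I_{m-1}$ on the event of interest, Proposition~\ref{prop:Omega_n} gives an almost-sure bound of $M \e^{-\kappa\gamma/\sigma^2}$ on the inner probability. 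Combined with the inductive hypothesis, this yields $\fP^{x_n,y_n}(\Omegahat_{n,m}^c) \leqs (m-n) M \e^{-\kappa\gamma/\sigma^2}$.

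The final cosmetic step is to absorb the multiplicative constant $M$ into the exponential: for $\sigma$ small enough, replacing $\kappa$ by a slightly smaller positive constant $\kappa'$ allows one to write $M \e^{-\kappa\gamma/\sigma^2} \leqs \e^{-\kappa'\gamma/\sigma^2}$, yielding the stated bound. The only subtlety in the argument is ensuring the measurability in the strong Markov step, which is standard once one notes that $\Omegahat_{n,m-1}$ and $y_{\tau_{m-1}}$ are $\cF_{\tau_{m-1}}$-measurable; no genuine obstacle arises, since all the real work has already been done in Proposition~\ref{prop:Omega_n}.
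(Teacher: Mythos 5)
Your proof is correct and is essentially the same argument the paper gives: split off a single slice, control it via the strong Markov property and Proposition~\ref{prop:Omega_n}, and induct. The only cosmetic difference is the direction of the induction -- you split off the \emph{last} slice and induct forward on $m-n$, while the paper splits off the \emph{first} slice and inducts backward in $n$; the two are mirror images, and your explicit remark about absorbing the multiplicative constant $M$ into the exponential by shrinking $\kappa$ is a detail the paper leaves implicit.
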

\begin{proof}
The decomposition $\Omegahat_{n,m}^c = \Omegahat_{n+1}^c \cup \Omegahat_{n+1,m}^c$
allows us to write 
\begin{align}
\fP^{x_n,y_n}\bigpar{\Omegahat_{n,m}^c} 
&= \fP^{x_n,y_n}\bigpar{\Omegahat_{n+1}^c \cup \Omegahat_{n+1,m}^c} \\
&= \fP^{x_n,y_n}\Bigpar{\Omegahat_{n+1}^c \cup (\Omegahat_{n+1} \cap \Omegahat_{n+1,m}^c)} \\
&= \fP^{x_n,y_n}\bigpar{\Omegahat_{n+1}^c} + 
\fP^{x_n,y_n}\bigpar{\Omegahat_{n+1} \cap \Omegahat_{n+1,m}^c}\;.
\end{align}
The first term can be bounded by~\eqref{eq:bound_P_Omega_n}, while the strong 
Markov property implies 
\begin{equation}
 \fP^{x_n,y_n}\bigpar{\Omegahat_{n+1} \cap \Omegahat_{n+1,m}^c} 
 \leqs \sup_{y_{n+1}\in I_{n+1}}
 \fP^{x_{n+1},y_{n+1}}\bigpar{\Omegahat_{n+1,m}^c}\;. 
\end{equation} 
The result then follows by induction, starting with $n=m-1$ and moving 
backward in time. 
\end{proof}

Since the event $\Omega_0$ introduced in~\eqref{eq:def_Omega0} contains $\Omegahat_{0,N}$ 
provided $h(x) \leqs h_0$ for all $x$, we have $\fP(\Omega_0^c) \leqs \fP(\Omegahat_{0,N}^c)$, 
and the bound~\eqref{eq:bound_P_Omega0} will follow from our choice of $\gamma$. 


\subsection{Expectation of $y_\tau$}
\label{ssec:expectation} 

Our aim is now to compute the expectations 
\begin{equation}
 E_{n,N}(y_n) = \bigexpecin{(x_n,y_n)}{y_{\tau_N} \indicator{\Omegahat_{n,N}}}
\end{equation} 
for $y_n \in I_n$. Since the starting point $y_n$ may be different from the 
reference solution $y^{\det}_n$, we introduce the notation 
\begin{equation}
 \Pi_n(y_n) = y_n + T_{n,n+1}(y_n)
 \label{eq:Pin} 
\end{equation} 
for its image on $\set{x = x_{n+1}}$ by the deterministic flow. In other words, 
$\Pi_n$ is the deterministic Poincar\'e map from the section $\set{x = x_n}$ 
to the next section $\set{x = x_{n+1}}$. In particular, we have $\Pi_n(y^{\det}_n)
= y^{\det}_n + \gamma = y^{\det}_{n+1}$. 

Then, we have the following inductive relation, which is based on a Taylor 
expansion of $E_{n,N}$. 

\begin{proposition}
\label{prop:induction_E}
Given $1\leqs n \leqs N$ and an initial condition $y_{n-1}\in I_{n-1}$, let $Z_n$ be the random variable 
\begin{equation}
 Z_n = \frac{y_{\tau_n} - \Pi_{n-1}(y_{n-1})}{\sigma}\;.
\end{equation} 
Then one has
\begin{align}
 E_{n-1,N}(y_{n-1})
 ={}&  E_{n,N}\bigpar{\Pi_{n-1}(y_{n-1})} \fP^{x_{n-1},y_{n-1}}(\Omegahat_n) \\
 &{}+ \sigma E_{n,N}'\bigpar{\Pi_{n-1}(y_{n-1})} 
 \bigexpecin{(x_{n-1},y_{n-1})}{Z_n \indicator{\Omegahat_n}} \\
 &{}+ \frac12 \sigma^2 E_{n,N}''\bigpar{\Pi_{n-1}(y_{n-1})} 
 \bigexpecin{(x_{n-1},y_{n-1})}{Z_n^2 \indicator{\Omegahat_n}} \\
 &{}+ \frac16 \sigma^3  
 \bigexpecin{(x_{n-1},y_{n-1})}{Z_n^3 \indicator{\Omegahat_n}
 E_{n,N}'''\bigpar{\Pi_{n-1}(y_{n-1}) + \theta\sigma Z_n}}
 \label{eq:En_Taylor} 
\end{align}
for some $\theta\in[0,1]$. 
\end{proposition}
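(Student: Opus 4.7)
The plan is to combine the strong Markov property with a third-order Taylor expansion with Lagrange remainder. The argument is essentially structural rather than computational, so the work is in bookkeeping rather than new estimates.

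First, I would observe the decomposition $\Omegahat_{n-1,N} = \Omegahat_n \cap \Omegahat_{n,N}$, where $\Omegahat_{n,N}$ depends only on the process after time $\tau_n$. Since $\tau_n$ is a stopping time and on the event $\Omegahat_n$ the sample path reaches the line $\set{x = x_n}$ at height $y_{\tau_n}\in I_n$, the strong Markov property yields
\begin{equation}
 E_{n-1,N}(y_{n-1})
 = \bigexpecin{(x_{n-1},y_{n-1})}{\indicator{\Omegahat_n} \bigexpecin{(x_n, y_{\tau_n})}{y_{\tau_N}\indicator{\Omegahat_{n,N}}}}
 = \bigexpecin{(x_{n-1},y_{n-1})}{\indicator{\Omegahat_n} E_{n,N}(y_{\tau_n})}\;.
\end{equation}

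Next, on the event $\Omegahat_n$ the exit point satisfies $y_{\tau_n} = \Pi_{n-1}(y_{n-1}) + \sigma Z_n$ by the very definition of $Z_n$, and both $y_{\tau_n}$ and $\Pi_{n-1}(y_{n-1})$ lie in $I_n$ for $\gamma$ sufficiently small (by the same calculation that justified the factor~$2$ in the definition of $B_n$). I would verify that $E_{n,N}$ is of class $C^3$ on a neighbourhood of $I_n$; this follows from the fact that the SDE~\eqref{eq:sn_scaled} has smooth coefficients, the stopping line $\set{x = x_N}$ is transverse to the deterministic flow, and the indicator $\indicator{\Omegahat_{n,N}}$ is insensitive to smooth perturbations of the initial condition away from the boundary. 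Consequently, Taylor's theorem with Lagrange remainder applies: for each $\omega \in \Omegahat_n$ there exists $\theta = \theta(\omega) \in [0,1]$ with
\begin{align}
 E_{n,N}(y_{\tau_n})
 ={}& E_{n,N}(\Pi_{n-1}(y_{n-1}))
 + \sigma Z_n\, E_{n,N}'(\Pi_{n-1}(y_{n-1})) \\
 &{}+ \frac{\sigma^2 Z_n^2}{2} E_{n,N}''(\Pi_{n-1}(y_{n-1}))
 + \frac{\sigma^3 Z_n^3}{6} E_{n,N}'''\bigpar{\Pi_{n-1}(y_{n-1}) + \theta\sigma Z_n}\;.
\end{align}

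Finally, I would take the expectation of the product of this identity with $\indicator{\Omegahat_n}$. The first three terms factor out the $\omega$-independent quantities $E_{n,N}^{(k)}(\Pi_{n-1}(y_{n-1}))$, producing exactly the first three terms on the right-hand side of~\eqref{eq:En_Taylor}. The Lagrange remainder contributes the last line.

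The only mildly delicate point is the measurability of $\theta(\omega)$: since the remainder is $\sigma^{-3}[E_{n,N}(y_{\tau_n}) - E_{n,N}(\Pi_{n-1}(y_{n-1})) - \dots]$, which is a measurable function of the path, one can take $\theta$ to be any measurable selection among values achieving the Lagrange identity (e.g.\ the infimum of the admissible set), and the final integrated formula is unaffected. Beyond this, no quantitative estimate is required — the sharp control on the moments of $Z_n$ and the size of $E_{n,N}^{(k)}$ is the business of the subsequent sections, where the inductive formula~\eqref{eq:En_Taylor} will actually be exploited.
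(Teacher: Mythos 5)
Your argument is correct and follows essentially the same route as the paper: decompose $\Omegahat_{n-1,N} = \Omegahat_n \cap \Omegahat_{n,N}$, apply the strong Markov property at the section $\set{x = x_n}$, and Taylor-expand $E_{n,N}$ around $\Pi_{n-1}(y_{n-1})$ with Lagrange remainder. (Incidentally, your decomposition corrects a small typo in the paper's proof, which writes $\indicator{\Omegahat_{n-1}}\indicator{\Omegahat_{n,N}}$ where $\indicator{\Omegahat_n}\indicator{\Omegahat_{n,N}}$ is meant; your supplementary remarks on the $C^3$-regularity of $E_{n,N}$ and the measurable selection of $\theta(\omega)$ are sound and fill in details the paper leaves implicit.)
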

\begin{proof}
Let $\chi_n$ be the density of $y_{\tau_n}$, restricted to $I_n$ (this can be 
viewed as the density of the process killed if it does not leave $B_n$ through $I_n$). 
Since $\indicator{\Omegahat_{n-1,N}} = \indicator{\Omegahat_{n-1}} \indicator{\Omegahat_{n,N}}$,
the strong Markov property applied on $\set{x = x_n}$ implies 
\begin{align}
 E_{n-1,N}(y_{n-1})
 &= \bigexpecin{(x_{n-1},y_{n-1})}{y_{\tau_N} 
 \indicator{\Omegahat_{n-1}} \indicator{\Omegahat_{n,N}}} \\
 &= \int_{I_n} \chi_n(z) E_{n,N}\bigpar{\Pi(y_{n-1}) + \sigma z} \6z\;.
\end{align}
We use the Taylor expansion with remainder 
\begin{equation}
 E_{n,N}\bigpar{\Pi(y_{n-1}) + \sigma z} 
 = \sum_{k=0}^2 \frac{1}{k!} \sigma^k z^k E_{n,N}^{(k)}\bigpar{\Pi(y_{n-1})}
 + \frac{1}{3!} \sigma^3 z^3 E_{n,N}'''\bigpar{\Pi(y_{n-1}) + \theta\sigma z}\;.
\end{equation} 
Since 
\begin{equation}
 \int_{I_n} z^k \chi_n(z) \6z 
 = \bigprobin{(x_{n-1},y_{n-1})}{Z_n^k \indicator{\Omegahat_n}}
\end{equation} 
for any $k\geqs0$, the first three terms in the Taylor expansion yield the first three terms 
in~\eqref{eq:En_Taylor}. The remainder is dealed with in a similar way, except 
that the function $E_{n,N}'''$ has to remain inside the expectation. 
\end{proof}

From now on, we make the choice 
\begin{equation}
 \gamma = \sigma\;,
\end{equation} 
since this will yield near-optimal error terms, and simplify their expressions. 
Since the event $\Omega_4 = \Omega_{4,n}$ introduced in~\eqref{eq:def_Omega4} 
is included in $\Omegahat_n$, Proposition~\ref{prop:moments_ytau} implies 
\begin{align}
 \bigexpecin{(x_{n-1},y_{n-1})}{Z_n \indicator{\Omegahat_n}} 
&= \frac12 \sigma \frac{T_{n-1,n}(y_{n-1})^2}{(x_n^2 + y^{\det}_n)^2}
+ \Order{\sigma^3} \;, \\
\bigexpecin{(x_{n-1},y_{n-1})}{Z_n^2 \indicator{\Omegahat_n}} 
&= T_{n-1,n}(y_{n-1}) + \Order{\sigma^2}\;. \\
\bigexpecin{(x_{n-1},y_{n-1})}{Z_n^3 \indicator{\Omegahat_n}} 
&= \Order{1}\;.
\end{align}
Here we have replaced the indicator functions $\indicator{\Omega_{4,n}}$ by 
$\indicator{\Omegahat_n}$, since $Z_n$ is bounded on $\Omegahat_n$ and 
the $\Omega_{4,n}$ have an exponentially small probability. 
This allows us to obtain the following inductive relation on the $E_{n,N}(y_n)$.

\begin{proposition}
\label{prop:Dn_induction}
For every $0\leqs n\leqs N-1$, one has 
\begin{equation}
 E_{n,N}(y_n) = y_n + T_{n,N}(y_n) + \sigma^2 D_{n,N}(y_n) 
 + \sigma^3 R_{n,N}(y_n, \sigma) \;,
 \label{eq:EnN_expansion} 
\end{equation} 
where the second-order corrections $D_{n,N}$ and remainders $R_{n,N}$ satisfy 
\begin{align}
\label{eq:DnN_inductive} 
 D_{n-1,N}(y_{n-1})
 &= D_{n,N}\bigpar{\Pi_{n-1}(y_{n-1})} + \frac12\sigma T_{n,N}''\bigpar{\Pi_{n-1}(y_{n-1})}
 + \Order{\sigma^2} \\
 R_{n-1,N}(y_{n-1},\sigma)
 &= R_{n,N}\bigpar{\Pi_{n-1}(y_{n-1}),\sigma} + \Order{\sigma}
\label{eq:RnN_inductive} 
\end{align} 
for $1\leqs n\leqs N-1$. 
\end{proposition}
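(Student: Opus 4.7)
The plan is to prove this by backward induction on $n$, starting from the base case $n=N$, where $\tau_N=0$ almost surely under $\fP^{x_N,y_N}$, so that $E_{N,N}(y_N)=y_N$ and~\eqref{eq:EnN_expansion} holds trivially with $T_{N,N}, D_{N,N}, R_{N,N}$ all vanishing. For the inductive step, I assume~\eqref{eq:EnN_expansion} at level $n$ and apply Proposition~\ref{prop:induction_E} to $E_{n-1,N}(y_{n-1})$. Differentiating the inductive expansion gives $E'_{n,N}(y)=1+T'_{n,N}(y)+\Order{\sigma^2}$, $E''_{n,N}(y)=T''_{n,N}(y)+\Order{\sigma^2}$, and $E'''_{n,N}(y)=\Order{1}$, provided the successive derivatives of $D_{n,N}$ and $R_{n,N}$ are uniformly controlled; this auxiliary smoothness I would carry along the induction, using the smoothness of the Poincar\'e map $\Pi_{n-1}$ and of $T_{n,N}$ away from the fold point.

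The moment estimates of Proposition~\ref{prop:moments_ytau}, specialised to $x_0=x_{n-1}$, $y_0=y_{n-1}$, $T=T_{n-1,n}(y_{n-1})$, $r_0=x_{n-1}^2+y_{n-1}$, with the choice $\gamma=\sigma$, give (after replacing $\indicator{\Omega_4}$ by $\indicator{\Omegahat_n}$, which costs only exponentially small corrections)
\begin{equation*}
 \bigexpecin{(x_{n-1},y_{n-1})}{Z_n\indicator{\Omegahat_n}}=\Order{\sigma^3}\;,\quad
 \bigexpecin{(x_{n-1},y_{n-1})}{Z_n^2\indicator{\Omegahat_n}}=T_{n-1,n}(y_{n-1})+\Order{\sigma^2}\;,\quad
 \bigexpecin{(x_{n-1},y_{n-1})}{Z_n^3\indicator{\Omegahat_n}}=\Order{1}\;,
\end{equation*}
since $T_{n-1,n}(y_{n-1})=\Order{\sigma}$. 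Substituting into~\eqref{eq:En_Taylor}, the first term yields (up to exponentially small corrections from Proposition~\ref{prop:Omega_n}) the expression $\Pi_{n-1}(y_{n-1})+T_{n,N}(\Pi_{n-1}(y_{n-1}))+\sigma^2 D_{n,N}(\Pi_{n-1}(y_{n-1}))+\sigma^3 R_{n,N}(\Pi_{n-1}(y_{n-1}),\sigma)$, and the deterministic semigroup identity $\Pi_{n-1}(y_{n-1})+T_{n,N}(\Pi_{n-1}(y_{n-1}))=y_{n-1}+T_{n-1,N}(y_{n-1})$ collapses the deterministic contribution into the desired form. The second term is $\Order{\sigma^4}$ and enters only $R$. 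The third term gives $\tfrac12\sigma^2 T''_{n,N}(\Pi_{n-1}(y_{n-1}))\,T_{n-1,n}(y_{n-1})+\Order{\sigma^4}$, and writing $T_{n-1,n}(y_{n-1})=\sigma+\Order{\sigma^2}$ produces the key contribution $\tfrac12\sigma^3 T''_{n,N}(\Pi_{n-1}(y_{n-1}))$. The fourth term is $\Order{\sigma^3}$ and enters only $R$. Collecting coefficients of $\sigma^2$ and $\sigma^3$ yields~\eqref{eq:DnN_inductive} and~\eqref{eq:RnN_inductive}.

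The principal technical obstacle is the uniform expansion $T_{n-1,n}(y_{n-1})=\sigma+\Order{\sigma^2}$ for $y_{n-1}\in I_{n-1}$. Since $T_{n-1,n}(y^{\det}_{n-1})=\gamma=\sigma$ and $\partial_y T_{n-1,n}=\Order{\sigma}$ (the crossing time of a slice of width $\delta_{n-1}=\Order{\sigma}$ depends only weakly on the starting height), the deviation is bounded by $\Order{\sigma h(x_{n-1})}$, so the width $h$ of the tubes $I_{n-1}$ must be chosen small enough (say $h(x)=\Order{\sigma|\log\sigma|}$ with $\sigma$-dependent tuning, or using the natural fluctuation scale of the process) for the remainder to fit into the announced $\Order{\sigma^2}$ budget; this choice is also what furnishes the exponential bound on $\fP(\Omega_0^c)$ in Theorem~\ref{thm:main} via Corollary~\ref{cor:Omega_n}. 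A secondary issue is the regularity assertion on $D_{n,N}$, $R_{n,N}$ used to expand $E'_{n,N}, E''_{n,N}, E'''_{n,N}$, which I would handle by differentiating the recursion and using the smoothness of $\Pi_{n-1}$ and $T_{n,N}$ together with the stability of the $\Order{\cdot}$ error bounds under $y$-differentiation on the compact strip where the induction lives.
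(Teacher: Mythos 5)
Your argument follows the paper's proof closely: same backward induction, same use of Proposition~\ref{prop:induction_E}, same semigroup identity to kill the order-$1$ terms, same identification of the $\tfrac12\sigma^2 E''\,\expec{Z^2}$ term as the source of $\tfrac12\sigma T''$. The bookkeeping of which Taylor contributions land in $D$ versus $R$ is also essentially what the paper does.

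Where you diverge is in the final paragraph. You worry that $T_{n-1,n}(y_{n-1})=\sigma+\Order{\sigma^2}$ must hold uniformly over $y_{n-1}\in I_{n-1}$, observe correctly that for $\Order{1}$-width tubes the deviation is only $\Order{\sigma h(x_{n-1})}=\Order{\sigma}$ rather than $\Order{\sigma^2}$, and propose to repair this by shrinking $h$ to $\Order{\sigma|\log\sigma|}$. The paper does not do this, and for good reason: the constant $h_0$ in the definition of $\Omega_0$ in Theorem~\ref{thm:main} is fixed, not $\sigma$-dependent, and the bound $\fP(\Omega_0^c)\leqs (\yf-\yin)\sigma^{-1}\e^{-\kappa/\sigma}$ is stated with a $\kappa$ depending only on $h_0$. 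A $\sigma$-dependent tube width would change the statement of the main theorem and the constants in Corollary~\ref{cor:Omega_n}. The resolution the paper uses (implicitly) is simpler: the recursion~\eqref{eq:DnN_inductive} is only ever iterated along the deterministic reference orbit $y^{\det}_0\mapsto y^{\det}_1\mapsto\dots$, since $\Pi_{n-1}(y^{\det}_{n-1})=y^{\det}_n$ exactly. At the reference points one has $T_{n-1,n}(y^{\det}_{n-1})=\gamma=\sigma$ identically, so the replacement of $T_{n-1,n}$ by $\sigma$ is exact where it matters, and the uniform expansion you call the "principal technical obstacle" is never invoked. For arbitrary $y_{n-1}\in I_{n-1}$ one can state the recursion with $\tfrac12\,T_{n-1,n}(y_{n-1})\,T''_{n,N}(\Pi_{n-1}(y_{n-1}))$ in place of $\tfrac12\sigma T''$, which is how the proof actually produces it; formula~\eqref{eq:DnN_inductive} as printed is specialised to the reference orbit. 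So your observation that the printed form of~\eqref{eq:DnN_inductive} is not literally uniform over a fixed-width tube is a legitimate one, but the conclusion to draw is that it is only needed on the reference orbit, not that $h$ must be shrunk.

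A secondary remark: your assertion that the fourth Taylor remainder is $\Order{\sigma^3}$ and "enters only $R$" needs care, because $\tfrac16\sigma^3\expec{Z_n^3\,E'''_{n,N}(\cdot)}$ divided by $\sigma^3$ is only $\Order{1}$ from the stated moment bound $\expec{Z_n^3\indicator{\Omegahat_n}}=\Order{1}$, not $\Order{\sigma}$. This affects whether the recursion $R_{n-1,N}=R_{n,N}+\Order{\sigma}$ as written in~\eqref{eq:RnN_inductive} follows immediately, and is worth noticing; the paper's remainder treatment leans on the same moment bound and an exponential probability estimate, so if you wish to make the remainder recursion airtight you should track the sign/cancellation structure of the third centred moment of $Z_n$ rather than use only the crude $\Order{1}$ bound.
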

\begin{proof}
We first note that if $\sigma = 0$, then $\Omegahat_{n,N} = \Omegahat$ and 
\begin{equation}
 E_{n,N}(y_n) 
 = \bigexpecin{(x_n, y_n)}{y_{\tau_N} \indicator{\Omegahat_{n,N}}}
 = y_n + T_{n,N}(y_n)\;,
\end{equation} 
so that~\eqref{eq:EnN_expansion} holds. We now proceed by induction over $n$, going 
backwards in time. The relation~\eqref{eq:DnN_inductive} is true for $n = N$ with 
$T_{N,N}(y_N) = D_{N,N}(y_N) = R_{N,N}(y_N) = 0$. Assuming it holds for some 
$n$ between $1$ and $N$, we have 
\begin{align}
 E_{n,N}'(y) &= 1 + T_{n,N}'(y) + \Order{\sigma^2}\;, \\
 E_{n,N}''(y) &= T_{n,N}''(y) + \Order{\sigma^2}\;.
\label{eq:EnN_derivatives} 
\end{align}
We now plug the expansion~\eqref{eq:En_Taylor} 
into~\eqref{eq:EnN_expansion} and expand into powers of $\sigma$. At order $1$, we 
obtain 
\begin{equation}
 y_{n-1} + T_{n-1,N}(y_{n-1}) 
 = \Pi_{n-1}(y_{n-1}) + T_{n,N}\bigpar{\Pi_{n-1}(y_{n-1})}\;,
\end{equation} 
which is true in view of~\eqref{eq:Pin}, since 
$T_{n-1,N}(y_{n-1}) = T_{n,N}(y_{n-1}) + T_{n,N}(\Pi_{n-1}(y_{n-1}))$ (this 
follows from the semi-group property of the deterministic flow). 
At order $\sigma^2$, we get 
\begin{align}
 D_{n-1,N}(y_{n-1})
 = \lim_{\sigma\to0} \biggbrak{&D_{n,N}\bigpar{\Pi_{n-1}(y_{n-1})}
 + \frac12 E_{n,N}'\bigpar{\Pi_{n-1}(y_{n-1})} 
 \frac{T_{n-1,n}(y_{n-1})^2}{(x_n^2 + y^{\det}_n)^2}\\
 &{}+ \frac12 E_{n,N}''\bigpar{\Pi_{n-1}(y_{n-1})} T_{n-1,n}(y_{n-1})
 + \Order{\sigma^2}}\;.
\end{align}
The bound~\eqref{eq:DnN_inductive} follows from~\eqref{eq:EnN_derivatives} and the fact that 
$T_{n-1,n}(y_{n-1}) = \Order{\sigma}$. 
For the remainder, we obtain the relation 
\begin{align}
 R_{n-1,N}(y_{n-1}, \sigma)
 ={}&  R_{n,N}\bigpar{\Pi_{n-1}(y_{n-1}), \sigma}
 + \frac{\fP^{x_{n-1},y_{n-1}}(\Omegahat_n^c)}{\sigma^3} E_{n,N}\bigpar{\Pi_{n-1}(y_{n-1})} \\
 &{}+ \frac16 \bigexpecin{(x_{n-1},y_{n-1})}{Z_n^3 
 \indicator{\Omegahat_{n,N}}E_{n,N}'''(\Pi_{n-1}(y_{n-1} + \theta\sigma Z_n))} 
 + \Order{\sigma^3}\;,
\end{align} 
which implies the remainder bound~\eqref{eq:RnN_inductive}, since 
$\fP^{x_{n-1},y_{n-1}}(\Omegahat_n^c)$ is exponentially small. 
\end{proof}

Since $\Pi_{n-1}(y^{\det}_{n-1}) = y^{\det}_n$, iterating the inductive 
relation~\eqref{eq:DnN_inductive} shows that 
\begin{equation}
\label{eq:DON} 
 D_{0,N}(y^{\det}_0) 
 = \frac12 \sigma \sum_{n=1}^N T_{n-1,n}''(y^{\det}_n) + \Order{N\sigma^2}\;,
\end{equation} 
and therefore, viewing the sum as a Riemann sum and since $\sigma = \gamma = \Order{1/N}$, 
\begin{align}
 \bigexpecin{(\xin,\yin)}{y_{\tau_N}\indicator{\Omegahat_{0,N}}}
 &= \yf + \frac12\sigma^3 \sum_{n=1}^N T_{n,N}''(y^{\det}_n)
 + \Order{N\sigma^4}
 + \Order{\sigma^3} \\
 &= \yf + \frac12\sigma^2 \int_{\yin}^{\yf} 
 \partial_{yy}T(x^{\det}(y),y;\xf) \6y 
 + \Order{\sigma^3}\;.
\end{align} 
Replacing $\indicator{\Omegahat_{0,N}}$ by $\indicator{\Omega_0}$ only results in an 
error term of order $\fP(\Omegahat_{0,N}^c)$, which is exponentially small. We have thus 
proved the bound~\eqref{eq:bound_expec_main} on the expectation. 


\subsection{Variance of $y_\tau$}
\label{ssec:variance} 

The computation of the variance of $y_\tau$ is quite similar to the computation of the 
expectation, so we only comment on what changes. The quantity
\begin{equation}
 M_{n,N}(y_{n-1}) = \bigexpecin{(x_n,y_n)}{y_{\tau_N^2}\indicator{\Omegahat_{n,N}}}
\end{equation} 
satisfies the expansion~\eqref{eq:En_Taylor} with $E_{n,N}$ replaced by 
$M_{n,N}$. Then the analogue of Proposition~\ref{prop:Dn_induction} reads 
\begin{equation}
 M_{n,N}(y_n) 
 = \bigbrak{y_n + T_{n,N}(y_n)}^2
 + \sigma^2 F_{n,N}(y_n) + \Order{\sigma^3}\;,
\end{equation} 
where
\begin{align}
 F_{n-1,N}(y_{n-1})
 ={}& F_{n,N}\bigpar{\Pi_{n-1}(y_{n-1})} \\
 &{}+ \sigma\Bigbrak{\Bigpar{1+T_{n,N}'\bigpar{\Pi_{n-1}(y_{n-1})}}^2 \\
 &\phantom{{}+{}\sigma\Bigl[} 
 {}+ \Bigpar{\Pi_{n-1}(y_{n-1})+  T_{n,N}\bigpar{\Pi_{n-1}(y_{n-1})}} 
 T_{n,N}''\bigpar{\Pi_{n-1}(y_{n-1})}} \\
 &{}+ \Order{\sigma^2}\;.
\end{align}
Since $y^{\det}_n + T_{n,N}(y^{\det}_n) = y^{\det}_N = \yf$, this entails
\begin{equation}
 F_{0,N}(y^{\det}_0)
 = \sigma \sum_{n=1}^N \Bigbrak{\bigpar{1 + T_{n,N}'(y^{\det}_n)}^2
 + y^{\det}_N T_{n,N}''(y^{\det}_n)}
 + \Order{\sigma^2}\;.
\end{equation} 
Therefore, using~\eqref{eq:DON} one obtains 
\begin{align}
 \variance^{(\xin,\yin)} \bigset{y_{\tau_N} \indicator{\Omegahat_{0,N}}}
 &= \sigma^2 \bigbrak{F_{0,N} - D_{0,N}^2} + \Order{\sigma^3} \\
 &= \sigma^3 \sum_{n=1}^N \bigpar{1 + T'_{n,N}(y^{\det}_n)}^2 + \Order{\sigma^3} \\
 &= \sigma^2 \int_{\yin}^{\yf} 
 \bigpar{1 + \partial_{y}T(x^{\det}(y),y;\xf)}^2 \6y 
 + \Order{\sigma^3}\;.
\end{align}
As in the case of the expectation, replacing $\indicator{\Omegahat_{0,N}}$ by $\indicator{\Omega_0}$ 
only results in an exponentially small error term, so that we have  
proved the bound~\eqref{eq:bound_variance_main} on the variance. The proof of Theorem~\ref{thm:main}
is thus complete.


\section{Analysis of the functions $D$ and $V$}
\label{sec:DV} 

In this section, we analyse the functions 
$D(\xin,\yin;\xf)$ and $V(\xin,\yin;\xf)$ that appear in the statement 
of Theorem~\ref{thm:main}, in the particular case where the deterministic reference 
solution 
\begin{equation}
 (x^{\det}(t), y^{\det}(t))
 =  (x_0^{\det}(t), y_0^{\det}(t))
\end{equation} 
is given by~\eqref{eq:x0}. 
This requires us to find expressions for the derivatives 
$\partial_y T(x_0^{\det}(y),y; \xf)$ and $\partial_{yy} T(x_0^{\det}(y),y; \xf)$
of the deterministic time to reach $\xf$, which we obtain in 
Section~\ref{ssec:T1T2}. Section~\ref{ssec:asympt} is 
dedicated to the asymptotic behaviour as $\xf\to\infty$, providing the 
proof of Proposition~\ref{prop:DV}. 


\subsection{General expressions for the derivatives of $T(\xin,\yin;\xf)$}
\label{ssec:T1T2} 

Since all trajectories in this section are deterministic, we suppress the superscript
$^{\det}$ from now on. Thus, the deterministic reference solution~\eqref{eq:x0} 
is denoted $(x_0(t), y_0(t))$. 

\begin{proposition}
\label{prop:TxTxx}
Assume the initial condition $(\xin, \yin)$ belongs to the deterministic reference 
solution \eqref{eq:x0}. Fix $\xf > \xin$, and denote by $\yf$ the corresponding 
$y$-coordinate. Then the deterministic travel time $T(\xin,\yin; \xf)$ satisfies
\begin{align}
 \partial_y T(\xin,\yin; \xf)
 &= -\frac{A}{\rf}\;, \\
 \partial_{yy} T(\xin,\yin; \xf)
 &= \frac{2\xf\rf-1}{\rf^3} A^2 
 + \frac{2A}{\rf^2} - \frac{B}{\rf}\;,
 \label{eq:TyTyy} 
\end{align}
where 
\begin{alignat}{2}
 A ={}& \rf - \frac{\cin}{\Ai(-\yf)^2}\;,\\
 B ={}& 2\rf\xf + 1 
 +\frac{2}{\Ai(-\yf)^2}
 \biggset{&&- \cin(2\xf - \xin) - \frac12 \Ai(-\yin)^2 \\
 & &&{}+ \pi \cin^2 \biggbrak{\frac{\Bi(-\yin)}{\Ai(-\yin)} - \frac{\Bi(-\yf)}{\Ai(-\yf)}}}\;,
 \label{eq:AB} 
\end{alignat}
and 
\begin{align}
 \rf &= \xf^2 + \yf\;, \\
 \cin &= \Ai(-\yin)^2 (\xin^2 + \yin)
 = \yin \Ai(-\yin)^2 + \Ai'(-\yin)^2\;.
\label{eq:cin} 
\end{align}
\end{proposition}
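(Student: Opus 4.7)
The plan is to reduce both derivatives to differentiating the implicit relation $\xf = x(\yf;K)$, where $x(y;K) = (\Ai'(-y)+K\Bi'(-y))/(\Ai(-y)+K\Bi(-y))$ and $K = K(\xin,\yin) = (\xin\Ai(-\yin)-\Ai'(-\yin))/(\Bi'(-\yin)-\xin\Bi(-\yin))$. Keeping $\xin$ and $\xf$ fixed, differentiating with respect to $\yin$ gives $\partial_{\yin}\yf = -(\partial_K x)(\partial_{\yin}K)/(\partial_y x)$. The three factors compute cleanly: $\partial_y x(\yf;K) = \yf + \xf^2 = \rf$ (the velocity at the endpoint), $\partial_K x(\yf;K) = 1/(\pi h_f^2)$ by the Airy Wronskian $\Ai\Bi' - \Ai'\Bi = 1/\pi$, with $h_f = \Ai(-\yf) + K\Bi(-\yf)$, and differentiating the defining ratio for $K$ using the Airy ODEs $\Ai''(z)=z\Ai(z)$, $\Bi''(z)=z\Bi(z)$ collapses (via the Wronskian) to the compact identity
\[
\partial_{\yin}K = -\frac{\xin^2+\yin}{\pi\bigpar{\Bi'(-\yin)-\xin\Bi(-\yin)}^2}.
\]
This yields a general formula for $\partial_{\yin}\yf$, valid at any $(\xin,\yin)$.

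Specializing to the slow solution, $K=0$, and the Wronskian collapses $\Bi'(-\yin)-\xin\Bi(-\yin) = 1/(\pi\Ai(-\yin))$, reducing the general formula to $\partial_{\yin}\yf = \cin/(\rf\Ai(-\yf)^2)$. Hence $\partial_y T = \partial_{\yin}\yf - 1 = -A/\rf$, as claimed. For $\partial_{yy}T = \partial_{\yin}^2\yf$, I apply logarithmic differentiation to the general formula:
\[
\partial_{\yin}^2\yf = (\partial_{\yin}\yf)\Biggbrak{\frac{1}{\xin^2+\yin} - 2\frac{\partial_{\yin}h_f}{h_f} - \frac{\partial_{\yin}\rf}{\rf} - 2\frac{\partial_{\yin}P}{P}},
\]
with $P = \Bi'(-\yin)-\xin\Bi(-\yin)$, $\partial_{\yin}\rf = \partial_{\yin}\yf$, $\partial_{\yin}h_f = -\xf h_f \partial_{\yin}\yf + \Bi(-\yf)\partial_{\yin}K$ (chain rule through $\yf$ and $K$), and $\partial_{\yin}P = \yin\Bi(-\yin) + \xin\Bi'(-\yin)$ (Airy ODE).

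Evaluating on the slow solution, the contributions split into five groups. The pieces not involving $\Bi$ combine into $(2\xf\rf-1)A^2/\rf^3 + 2A/\rf^2 - (2\rf\xf+1)/\rf$ after expanding $A = \rf - \cin/\Ai(-\yf)^2$. The Airy-ratio terms are handled as follows: the $\Bi(-\yf)/\Ai(-\yf)$ contribution comes directly from the $\Bi(-\yf)\partial_{\yin}K$ term inside $\partial_{\yin}h_f$, while the $\Bi(-\yin)/\Ai(-\yin)$ contribution is extracted from $\partial_{\yin}P$ using the slow-solution identity
\[
-\frac{\cin\Bi(-\yin)}{\Ai(-\yin)} + \yin\Ai(-\yin)\Bi(-\yin) + \Ai'(-\yin)\Bi'(-\yin) = \frac{\xin}{\pi},
\]
which follows from the Wronskian and $\cin = \Ai'(-\yin)^2+\yin\Ai(-\yin)^2$. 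This identity is the critical step: it both isolates the compact combination $\pi\cin^2\bigbrak{\Bi(-\yin)/\Ai(-\yin) - \Bi(-\yf)/\Ai(-\yf)}$ and produces the extra $-2\xin\cin/\Ai(-\yf)^2$ term matching the $(2\xf-\xin)\cin$ coefficient in $B$. The main obstacle is therefore the algebraic bookkeeping required to regroup the five contributions into the stated form; all the analytic input beyond this is the Airy ODE and the Wronskian identity, used repeatedly.
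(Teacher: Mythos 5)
Your proof is correct and takes a genuinely different route from the paper's. The paper differentiates along the \emph{time} parametrisation: it writes the perturbed solution as $x(t)=x_0(t)+\eps x_1(t)+\tfrac12\eps^2x_2(t)+\Order{\eps^3}$, derives variational ODEs for $x_1,x_2$, solves them by variation of constants (which produces the integral $I(t)=\int_0^t\Ai(-\yin-s)^2\,\6s$), and then matches orders in $\eps$ in the condition $x(T)=\xf$ to read off $T_1=\partial_yT$ and $T_2=\partial_{yy}T$. You instead exploit the explicit two-parameter Riccati family $x(y;K)$: you solve $K$ from the initial condition, treat $\xf=x(\yf;K)$ as an implicit relation in $\yin$, and differentiate. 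Both approaches end up invoking the Airy ODE and the Wronskian repeatedly, but the mechanism is different. I checked the substantive steps you listed: the Wronskian computation giving $\partial_K x=1/(\pi h_f^2)$, the collapse $\partial_{\yin}K=-(\xin^2+\yin)/(\pi P^2)$ via cancellation of the $\Ai\Bi$, $\Ai'\Bi'$ cross-terms, the slow-solution simplification $P=1/(\pi\Ai(-\yin))$ giving $\partial_{\yin}\yf=\cin/(\rf\Ai(-\yf)^2)$, the logarithmic-derivative expansion, the chain-rule term $\partial_{\yin}h_f=-\xf h_f\,\partial_{\yin}\yf+\Bi(-\yf)\partial_{\yin}K$, the slow-solution identity $-\cin\Bi(-\yin)/\Ai(-\yin)+\yin\Ai(-\yin)\Bi(-\yin)+\Ai'(-\yin)\Bi'(-\yin)=\xin/\pi$, and the final regrouping; everything assembles correctly into the stated $A$, $B$. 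Your route is arguably more direct here since the explicit $K$-parametrisation is available, at the cost of a slightly heavier chain rule (both $\yf$ and $K$ depend on $\yin$ inside $h_f$), whereas the paper's variational expansion generalizes to cases where no closed-form family of solutions exists. One minor remark: you should state explicitly that $\partial_{\yin}K|_{K=0}=-\pi\cin$ before plugging into $\partial_{\yin}h_f$, since that is where $\cin$ enters the $\Bi(-\yf)/\Ai(-\yf)$ contribution.
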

\begin{proof}
In order to determine the effect of a change in the initial
$y$-coordinate, we write 
\begin{align}
 x(t) &= x_0(t) + \eps x_1(t) + \frac12\eps^2 x_2(t) + \Order{\eps^3} \\
 y(t) &= y_0(t) + \eps\;,
\end{align}
where $x_1(0) = x_2(0) = 0$. Plugging this into the system~\eqref{eq:sn_scaled} 
with $\sigma = 0$ yields 
\begin{align}
 \dot x_1 &= 2x_0(t) x_1 + 1\;, \\
 \dot x_2 &= 2x_0(t) x_2 + 2x_1(t)^2\;.
\end{align}
These are linear inhomogenous equations, which can be solved by the method 
of variation of the constant, yielding
\begin{align}
 x_1(t) 
 &= \frac{I(t)}{\Ai(-\yin-t)^2}\;, \\
 x_2(t)
 &= \frac{2}{\Ai(-\yin-t)^2}
 \int_0^t \frac{I(s)^2}{\Ai(-\yin-s)^2} \6s\;.
 \label{eq:x1x2} 
\end{align}
Here 
\begin{equation}
 I(t) = \int_0^t \Ai(-\yin-s)^2 \6s
 = f(-\yin - t) - f(-\yin)\;,
\end{equation} 
since $f$, which has been introduced in~\eqref{eq:def_f}, satisfies $f'(z) = -\Ai(z)^2$. 
We now expand 
\begin{equation}
 T = T(\xin, \yin + \eps; \xf)
 = T_0 + \eps T_1 + \frac12\eps^2 T_2 + \Order{\eps^3}\;,
\end{equation} 
where $T_0 = T(\xin, \yin; \xf)$, while $T_1$ and $T_2$ coincide with the 
$y$-derivatives we want to determine. The condition $x(T) = \xf$, expanded 
into powers of $\eps$, yields 
\begin{align}
T_1 \dot x_0(T_0) + x_1(T_0) &= 0 \\
T_2 \dot x_0(T_0) + T_1^2 \ddot x_0(T_0) + 2 T_1 \dot x_1(T_0) + x_2(T_0) &= 0\;,
\end{align}
whose solution reads 
\begin{align}
T_1 &= - \frac{x_1(T_0)}{\dot x_0(T_0)} \\
T_2 
&= -\frac{1}{\dot x_0(T_0)} 
\biggbrak{\frac{x_1(T_0)^2 \ddot x_0(T_0)}{\dot x_0(T_0)^2} 
- 2 \frac{x_1(T_0) \dot x_1(T_0)}{\dot x_0(T_0)} + x_2(T_0)}\;.
\label{eq:T1T2} 
\end{align}
Writing 
$A = x_1(T_0)$, $B = x_2(T_0)$, 
and using the differential equations satisfied by $x_0(t)$ and $x_1(t)$ yields 
\begin{align}
 \dot x_0(T_0) &= \xf^2 + \yf = \rf \\
 \ddot x_0(T_0) &= 2\xf^3 + 2 \xf\yf + 1 = 2\xf\rf + 1 \\
 \dot x_1(T_0) &=  2\xf A + 1\;,
\end{align}
which implies the expressions~\eqref{eq:TyTyy} for the derivatives 
$\partial_y T = T_1$ and $\partial_{yy}T = T_2$. It remains to check 
the expressions~\eqref{eq:AB} of $A$ and $B$. 
Regarding $A$, we note that~\eqref{eq:def_f} implies 
\begin{equation}
 f(-y_0(t)) = \Ai(-y_0(t)^2)\bigbrak{y_0(t) + x_0(t)^2}\;.
\end{equation} 
In particular, $f(-\yin) = \cin$, and therefore 
\begin{equation}
\label{eq:It} 
 I(t) = \Ai(-y_0(t))^2 \bigbrak{y_0(t) + x_0(t)^2} - \cin\;.
\end{equation} 
This yields 
\begin{equation}
I(T_0) = \Ai(-\yf)^2 \rf - \cin\;, 
\end{equation} 
which by~\eqref{eq:x1x2} implies the expression for $A$ in~\eqref{eq:AB}. 
Regarding $B$, we replace one factor $I(s)$ in the expression~\eqref{eq:x1x2} 
of $x_2(t)$ by~\eqref{eq:It}, which yields
\begin{equation}
\label{eq:B1} 
 B = \frac{2}{\Ai(-\yf)^2}
 \int_0^{T_0} I(s) \biggbrak{\dot x_0(s) - \frac{\cin}{\Ai(-y_0(s))^2}} \6s\;,
\end{equation} 
and split the integral into two parts. For the first part, integration by 
parts yields 
\begin{align}
 \int_0^{T_0} I(s)\dot x_0(s)\6s 
 &= I(T_0)\xf - \int_0^{T_0} \dot I(s)x_0(s)\6s \\
 &= \bigbrak{\Ai(-\yf)^2\rf - \cin}\xf
 - \int_0^{T_0} \Ai(-y_0(s))\Ai'(-y_0(s))\6s
 \\
 &= \bigbrak{\Ai(-\yf)^2\rf - \cin}\xf
 + \frac12 \bigbrak{\Ai(-\yf)^2 - \Ai(-\yin)^2}\;.
 \label{eq:BI1} 
\end{align}
For the second part, we use the fact that~\eqref{eq:It} implies 
\begin{equation}
 \int_0^{T_0} \frac{I(s)}{\Ai(-y_0(s))^2} \6s 
 = \xf - \xin - \cin \int_0^{T_0} \frac{\6s}{\Ai(-y_0(s))^2}\;, 
 \label{eq:BI2} 
\end{equation} 
where the integral can be computed thanks to the relation 
\begin{equation}
 \pi\frac{\6}{\6z} \biggpar{\frac{\Bi(z)}{\Ai(z)}} = \frac{1}{\Ai(z)^2}\;,
\end{equation} 
which follows from the expression~\eqref{eq:Wronskian} of the Wronskian of Airy functions.
Replacing~\eqref{eq:BI1} and \eqref{eq:BI2} in~\eqref{eq:B1} and rearranging yields the 
expression for $B$ in~\eqref{eq:AB}.
\end{proof}


\subsection{Asymptotics for large $\xf$}
\label{ssec:asympt} 

The expressions for the derivatives $\partial_y T$ and $\partial_{yy}T$ simplify 
somewhat in the limit $\xf\to\infty$, as shows the following result. 

\begin{proposition}
\label{prop:asymptotics}
Assume the initial condition $(\xin, \yin)$ belongs to the deterministic 
solution \eqref{eq:x0}. Then the deterministic travel time $T(\xin,\yin; \xf)$ satisfies
\begin{alignat}{2}
 \lim_{\xf\to\infty}
 \partial_y T(\xin,\yin; \xf)
 ={}& \frac{\cin}{\Ai'(-y^\star)^2} {}-{}&&1\;, 
  \label{eq:Ty_asympt} \\
 \lim_{\xf\to\infty}
 \partial_{yy} T(\xin,\yin; \xf)
 ={}& \frac{2}{\Ai'(-y^\star)^2} 
 \biggset{&&\!\!\!\pi \cin \bigbrak{-\yin\Ai(-\yin)\Bi(-\yin) - \Ai'(-\yin)\Bi'(-\yin)} \\
 & &&\!\!\!{}+ \pi \cin^2 \frac{\Bi'(-y^*)}{\Ai'(-y^*)}
 + \frac12 \Ai(-\yin)^2}\;.
 \label{eq:Tyy_asympt} 
\end{alignat}
\end{proposition}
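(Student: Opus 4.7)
The proof will start from the explicit expressions for $\partial_y T$ and $\partial_{yy}T$ given in Proposition~\ref{prop:TxTxx} and take the limit $\xf\to\infty$, which along the slow solution~\eqref{eq:x0} is equivalent to $\yf\to\ystar$. The key preparatory identity, to be established first, is
\begin{equation*}
 \rf \Ai(-\yf)^2 = \Ai'(-\yf)^2 + \yf \Ai(-\yf)^2 = f(-\yf)\;,
\end{equation*}
obtained by substituting $\xf = \Ai'(-\yf)/\Ai(-\yf)$ into the definition of $\rf$ and comparing with~\eqref{eq:def_f}. Since $\Ai(-\ystar)=0$, this yields $\rf\Ai(-\yf)^2 \to \Ai'(-\ystar)^2$. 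Writing $A = [f(-\yf)-\cin]/\Ai(-\yf)^2$ gives $A/\rf = 1 - \cin/f(-\yf) \to 1 - \cin/\Ai'(-\ystar)^2$, which together with $\partial_y T = -A/\rf$ immediately delivers~\eqref{eq:Ty_asympt}.

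For $\partial_{yy}T$, the subtlety is that both $2\xf A^2/\rf^2$ and $B/\rf$ diverge linearly in $\xf$; only their combination with the remaining pieces of~\eqref{eq:TyTyy} has a finite limit. The plan is to expand
\begin{equation*}
 A^2 = \rf^2 - \frac{2\rf \cin}{\Ai(-\yf)^2} + \frac{\cin^2}{\Ai(-\yf)^4}\;,
\end{equation*}
so that the leading $+2\xf$ produced by $\frac{2\xf\rf-1}{\rf^3}A^2$ cancels the $-2\xf$ coming from the $2\rf\xf$ in $-B/\rf$, and the term $-4\xf\cin/[\rf\Ai(-\yf)^2]$ cancels the analogous contribution arising from $-2\cin(2\xf-\xin)/\Ai(-\yf)^2$ in $B$. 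The contributions $-A^2/\rf^3$ and $2A/\rf^2$ go to zero and may be discarded.

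After these cancellations, two potentially divergent pieces remain, namely $2\xf\cin^2/(\rf \Ai(-\yf)^2)^2$ and $2\pi\cin^2\Bi(-\yf)/[\rf\Ai(-\yf)^3]$. Setting $u=\Ai(-\yf)$, the first behaves like $2\cin^2\xf/[u^2\Ai'(-\ystar)^2 \cdot f(-\yf)]$ while the second behaves like $2\cin^2 \pi\Bi(-\yf)/[u\Ai'(-\ystar)^2]$; each diverges individually as $u\to0$, but the bracket
\begin{equation*}
 \frac{\xf}{f(-\yf)} + \frac{\pi \Bi(-\yf)}{u}
\end{equation*}
admits a finite limit by the Wronskian identity~\eqref{eq:Wronskian}, which can be rewritten as $\pi\Bi(-\yf) = -1/\Ai'(-\yf) + \pi u \Bi'(-\yf)/\Ai'(-\yf)$, together with the estimate $\Ai'(-\yf)-\Ai'(-\ystar)=\Order{u^2}$ obtained from Airy's equation and $\tfrac{d}{d\yf}\Ai'(-\yf)=\yf\Ai(-\yf)$. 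The outcome is $\pi\Bi'(-\ystar)/\Ai'(-\ystar)$, producing the term $2\pi\cin^2\Bi'(-\ystar)/\Ai'(-\ystar)^3$ in~\eqref{eq:Tyy_asympt}. The surviving finite remainder combines to $\frac{2}{\Ai'(-\ystar)^2}\bigl[\tfrac12\Ai(-\yin)^2-\cin\xin-\pi\cin^2\Bi(-\yin)/\Ai(-\yin)\bigr]$.

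To match the form stated in~\eqref{eq:Tyy_asympt}, the plan is to apply the Wronskian identity at $z=-\yin$ together with $\cin=\Ai'(-\yin)^2+\yin\Ai(-\yin)^2$ and $\xin=\Ai'(-\yin)/\Ai(-\yin)$; a short computation then yields the identity
\begin{equation*}
 -\cin\xin - \pi\cin^2\frac{\Bi(-\yin)}{\Ai(-\yin)}
 = \pi\cin\bigl[-\yin\Ai(-\yin)\Bi(-\yin) - \Ai'(-\yin)\Bi'(-\yin)\bigr]\;,
\end{equation*}
which is precisely what is needed. The main obstacle is the careful bookkeeping in the cancellation argument for $\partial_{yy}T$: one must simultaneously track leading and subleading contributions in the small parameter $u = \Ai(-\yf)$ across several terms, and invoke both Airy's equation and the Wronskian to extract the finite limit.
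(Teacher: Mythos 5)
Your proposal is correct and follows essentially the same route as the paper: take the limit $\yf\to\ystar$ in the explicit formulas of Proposition~\ref{prop:TxTxx}, observe that the divergent contributions in $\xf$ cancel, evaluate the remaining divergent pair via the Wronskian identity~\eqref{eq:Wronskian} at $-\yf$ (yielding $\pi\cin^2\Bi'(-\ystar)/\Ai'(-\ystar)$), and simplify the constant term with the Wronskian at $-\yin$ together with~\eqref{eq:cin} — the paper merely organises the same cancellations as a Laurent-type expansion in $\eta=\ystar-\yf$, showing $b_1=2a_1^2$, rather than your exact algebraic bookkeeping based on the identity $\rf\Ai(-\yf)^2=f(-\yf)$. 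One typographical slip: your first divergent piece should read $2\cin^2\xf/f(-\yf)^2$ (no extra factor $u^2$ in the denominator); since the bracket $\xf/f(-\yf)+\pi\Bi(-\yf)/u$ with prefactor $2\cin^2/f(-\yf)\to 2\cin^2/\Ai'(-\ystar)^2$ that you actually use is the correct one, the argument is unaffected.
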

\begin{proof}
In order to compute the limits, we write 
\begin{equation}
 \yf = y^* - \eta 
\end{equation} 
where $\eta>0$ will be sent to $0$. 
The Taylor expansions 
\begin{align}
 \Ai(-\yf) &= \eta \Ai'(-y^*) \bigbrak{1+\Order{\eta^2}} \;, \\
 \Bi(-\yf) &= \Bi(-y^*) + \eta \Bi'(-y^*) +\Order{\eta^2} \\
 &= -\frac{1}{\pi \Ai'(-y^*)} + \eta \Bi'(-y^*) +\Order{\eta^2}\;,
\label{eq:asymp} 
\end{align}
where we have used the Wronskian identity~\eqref{eq:Wronskian} in the last line, 
immediately imply 
\begin{equation}
 \xf 
 = \frac{\Ai'(-\yf)}{\Ai(-\yf)}
 = \frac{1}{\eta} \bigbrak{1+\Order{\eta^2}}\;, \qquad 
 \rf = \frac{1}{\eta^2} \bigbrak{1+\Order{\eta^2}}\;.
\end{equation} 
It thus follows from~\eqref{eq:AB} that 
\begin{equation}
\label{eq:A_asymp} 
 A = \frac{a_1}{\eta^2} \bigbrak{1+\Order{\eta^2}}
 \qquad 
 \text{where}
 \qquad 
 a_1 = 1 - \frac{\cin}{\Ai'(-y^\star)^2}\;.
\end{equation} 
Replacing this in the expression for $\partial_y T$ in~\eqref{eq:TyTyy} and taking the limit 
$\eta\to0$ yields~\eqref{eq:Ty_asympt}. 
Proceeding similarly for $B$, we find 
\begin{equation}
\label{eq:B_asymp} 
 B = \frac{b_1}{\eta^3} + \frac{b_2}{\eta^2} + \biggOrder{\frac{1}{\eta}}\;,
\end{equation} 
where 
\begin{equation}
 b_1 = 2 + \frac{2}{\Ai'(-y^\star)^2}
 \biggbrak{-2\cin + \frac{\cin^2}{\Ai'(-y^\star)^2}}
 = 2 \biggpar{1- \frac{\cin}{\Ai'(-y^\star)}}^2
 = 2a_1^2
 \label{eq:b1a1} 
\end{equation}
and 
\begin{equation}
 b_2 = \frac{2}{\Ai'(-y^\star)^2} 
 \biggset{\cin\xin - \frac12\Ai'(-\yin)^2 
 + \pi\cin^2 \biggbrak{\frac{\Bi(-\yin)}{\Ai(-\yin)} - \frac{\Bi'(-y^\star)}{\Ai'(-y^\star)}}}\;.
\end{equation} 
This last expression can be simplified by noting that the second form of $\cin$ in~\eqref{eq:cin} 
and the Wronskian identity~\eqref{eq:Wronskian} imply
\begin{equation}
 \xin + \pi\cin \frac{\Bi(-\yin)}{\Ai(-\yin)}
 = \pi\bigbrak{\yin\Ai(-\yin)\Bi(-\yin) + \Ai'(-\yin)\Bi'(-\yin)}\;,
\end{equation} 
which leads to 
\begin{align}
 b_2 = 
 \frac{2}{\Ai'(-y^\star)^2} \biggbrak{&\pi\cin \bigbrak{\yin\Ai(-\yin)\Bi(-\yin) + \Ai'(-\yin)\Bi'(-\yin)}\\
 & -\frac12 \Ai(-\yin)^2 - \pi\cin^2 \frac{\Bi'(-y^\star)}{\Ai'(-y^\star)}} \;.
\end{align}
Replacing~\eqref{eq:A_asymp} and~\eqref{eq:B_asymp} in the second expression in~\eqref{eq:TyTyy} yields 
\begin{equation}
 \partial_{yy}T(\xin,\yin; \xf)
 = \frac{2a_1^2 - b_1}{\eta} - b_2 + \Order{\eta}\;,
\end{equation} 
which yields~\eqref{eq:Tyy_asympt}, by taking the limit $\eta\to0$, since the singular term of order $\eta^{-1}$ 
vanishes thanks to~\eqref{eq:b1a1}.  
\end{proof}

We can now complete the proof of Proposition~\ref{prop:DV}. 

\begin{proof}[{\sc Proof of Proposition~\ref{prop:DV}}]
The limiting values of $D$ and $V$ are given by the integrals 
\begin{align}
 I_a(\yin,y^\star)
 &= \int_{\yin}^{y^\star} \partial_{yy}T(x_0(y),y;\infty) \6y\;, \\
 I_b(\yin,y^\star)
 &= \int_{\yin}^{y^\star} \bigpar{1 + \partial_yT(x_0(y),y;\infty)}^2 \6y\;.
\end{align}
We start by computing $I_b$. Note that $\partial_yT(x_0(y),y;\infty)$ is given by~\eqref{eq:Ty_asympt} 
with $\cin$ replaced by 
\begin{equation}
 c(y) = y\Ai(-y)^2 + \Ai'(-y)^2 
 = f(-y)\;,
\end{equation}
where $f$ has been introduced in~\eqref{eq:def_f}. Therefore, 
\begin{equation}
 I_b(\yin,y^\star) 
 = \frac{1}{\Ai'(-y^\star)^4}
 \int_{-y^\star}^{-\yin} f(z)^2\6z\;,
\end{equation} 
where we have used the change of variables $z = -y$ to avoid sign mistakes when computing the integral. 
The function $\cF$ defined in~\eqref{eq:def_F} satisfies $\cF'(z) = f(z)^2$. Thus, the limiting 
expression for $V$ follows from the fact that $\cF(-y^\star) = -\frac{y^\star}{2}\Ai'(-y^\star)^4$. 
Here the primitive $\cF$ of $f^2$ has been found by using a homogenous polynomial of degree $4$ in
$\Ai(z)$ and $\Ai'(z)$ with polynomial coefficients in $z$ as an ansatz, and determining the 
coefficients by plugging the ansatz into the condition $\cF'(z) = f(z)^2$. 

Proceeding in a similar way for $I_a$, we obtain 
\begin{equation}
 I_a(\yin,y^\star)
 = \frac{1}{\Ai'(-y^\star)^2} \int_{-y^\star}^{-\yin}
 \biggbrak{-2\pi f(z) g(z) + 2\pi \frac{\Bi'(-y^\star)}{\Ai'(-y^\star)} f(z)^2 + \Ai(z)^2} \6z\;, 
\end{equation} 
where 
\begin{equation}
 g(z) = \Ai'(z)\Bi'(z) - z \Ai(z)\Bi(z)\;.
\end{equation} 
The function $\cG(z)$ defined in~\eqref{eq:def_G} is a primitive of $f(z)g(z)$, while we have 
already seen the $\cF(z)$ is a primitive of $f(z)^2$, and $-f(z)$ is a primitive of $\Ai(z)^2$. 
The limiting expression for $D$ then follows from the fact that 
\begin{equation}
 \cG(-y^\star) = -\frac{y^\star}{2} \Ai'(-y^\star)^3 \Bi'(-y^\star) 
 - \frac{1}{8\pi} \Ai'(-y^\star)^2\;.
\end{equation} 
As already stated, the limiting expressions~\eqref{eq:DV_limit} follow from the asymptotical 
behaviour \eqref{eq:Airy_asymptotics} of the Airy functions. This concludes the proof of 
Proposition~\ref{prop:DV}, except for the fact that $D$ is positive, which we show in 
the separate Lemma below.
\end{proof}

\begin{lemma}
We have
\begin{equation}
 D(\xin,\yin;\infty) :=
 \lim_{\xf\to\infty} D(\xin,\yin;\xf) > 0
\end{equation} 
for any $(\xin,\yin)$ on the reference slow solution. 
\end{lemma}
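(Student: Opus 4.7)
The plan is to reduce the positivity of $D(\xin,\yin;\infty)$ to the positivity of the integrand in the representation~\eqref{eq:VD}. On the reference slow solution, $x^{\det}(y) = x_0(y)$, so Theorem~\ref{thm:main} gives
\[
 D(\xin,\yin;\infty) = \int_\yin^{\ystar}\partial_{yy}T(x_0(y),y;\infty)\,\6y.
\]
The identity $\cin = f(-y)$ holds on the slow solution, and Proposition~\ref{prop:asymptotics} then yields $\partial_{yy}T(x_0(y),y;\infty) = 2\,Q(-y)/\Ai'(-\ystar)^2$, where
\[
 Q(z) := \pi\mu\,f(z)^2 - \pi\,f(z)g(z) + \tfrac12\Ai(z)^2,
 \qquad \mu := \frac{\Bi'(-\ystar)}{\Ai'(-\ystar)},
\]
and $g(z) := \Ai'(z)\Bi'(z) - z\Ai(z)\Bi(z)$. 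Since $\Ai'(-\ystar)^2 > 0$, the lemma reduces to proving $Q(z) > 0$ for every $z > -\ystar$.

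The second step is to rewrite $Q$ using the Airy solution $\phi(z) := \mu\Ai(z) - \Bi(z)$, which is singled out by $\phi'(-\ystar) = 0$. A direct computation with $\Ai'' = z\Ai$ and $\Bi'' = z\Bi$ shows that $\mu f - g = \Ai'\phi' - z\Ai\phi$, so $(\mu f - g)' = -\Ai\phi$, while $(\tfrac12\Ai^2)' = \Ai\Ai'$. Combined with $\Ai(-\ystar) = 0$, these give $Q(-\ystar) = 0$ and the integral representation
\[
 Q(z) = \int_{-\ystar}^{z}\Ai(s)\bigbrak{\Ai'(s) - \pi\,f(z)\,\phi(s)}\,\6s.
\]
The function $f$ is strictly positive on $(-\ystar,\infty)$, since $f' = -\Ai^2 \leqs 0$, $f(-\ystar) = \Ai'(-\ystar)^2 > 0$ and $f(+\infty) = 0$ (the last equality follows from the fact that $z\Ai(z)^2 - \Ai'(z)^2$ has derivative $\Ai(z)^2$, hence $\int_0^{\infty}\Ai(s)^2\,\6s = \Ai'(0)^2$).

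To close the argument, I would exploit the algebraic identity
\[
 (\Ai'\phi' - z\Ai\phi)^2 - z/\pi^2 = f(z)\bigbrak{\phi'(z)^2 - z\phi(z)^2},
\]
valid for any pair of solutions of Airy's equation; it follows from the general relation $(u'v' - zuv)^2 - z(uv' - u'v)^2 = (u'^2 - zu^2)(v'^2 - zv^2)$ applied to $u = \Ai$, $v = \phi$, together with the constant Wronskian $\Ai\phi' - \Ai'\phi = -1/\pi$. Since $\phi'^2 - z\phi^2$ has value $\ystar\Bi(-\ystar)^2 > 0$ at $z = -\ystar$ and derivative $-\phi^2 \leqs 0$, this identity yields quantitative control on $\mu f - g$ which, combined with the integral representation, should deliver a sum-of-squares decomposition of $Q$ that is manifestly nonnegative. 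The main obstacle I anticipate is the region $z > 0$: there $\phi$ eventually changes sign, the two contributions to $Q$ compete, and one must verify algebraically that $\tfrac12\Ai^2$ dominates $-\pi f(\mu f - g)$. As a safety net, one can instead establish monotonicity of $D$ in $\yin$ directly from the explicit formula in Proposition~\ref{prop:DV}, using the boundary values $D \to 3/4$ at $\yin\to-\infty$ and $D\to 0$ at $\yin\to\ystar$ already computed there.
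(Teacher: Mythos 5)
Your setup is correct and, in substance, identical to the paper's: reducing the lemma to pointwise positivity of the integrand is the same as the paper's reduction to monotonicity of $D$ in $\yin$ (by the fundamental theorem of calculus applied to $D(\xin,\yin;\infty)=\int_{\yin}^{\ystar}\partial_{yy}T\,\6y$, the two statements are equivalent), your $Q(z)$ is exactly one half of the function $F(z)=2\pi\bigpar{\mu f(z)-g(z)}f(z)+\Ai(z)^2$ the paper studies, your $\phi=\mu\Ai-\Bi$ is the paper's $H$, and the identities $(\mu f-g)'=-\Ai\phi$ and the integral representation of $Q$ are correct. The genuine gap is that the crucial claim, $Q(z)>0$ for all $z>-\ystar$, is never proved: since $Q(-\ystar)=0$ and $Q(z)\to0$ as $z\to\infty$, positivity is tight at both ends, and this claim \emph{is} the lemma. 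You only state that the quadratic-form identity \lq\lq should deliver a sum-of-squares decomposition\rq\rq\ and that \lq\lq one must verify algebraically\rq\rq\ the domination of $-\pi f(\mu f-g)$ by $\tfrac12\Ai^2$ for $z>0$; that is a plan, not an argument. Moreover the identity cannot do the job in the form proposed: it controls $(\mu f-g)^2$ but not the sign of $\mu f-g$, and its right-hand side $f(z)\bigbrak{\phi'(z)^2-z\phi(z)^2}$ is not eventually nonnegative, because $\phi'^2-z\phi^2$ (which starts at $\ystar\Bi(-\ystar)^2>0$ and has derivative $-\phi^2$) becomes negative for large $z$: the leading exponential orders of $\phi'^2$ and $z\phi^2$ cancel and the first correction is negative. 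Finally, the \lq\lq safety net\rq\rq\ is circular: establishing monotonicity of $D$ in $\yin$ is literally the same unproven statement as positivity of the integrand, and the boundary values $\tfrac34$ and $0$ do not help to prove it.

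For comparison, the paper closes exactly this gap by a nested sign analysis instead of an algebraic domination. It computes $F'(z)=4\pi\Ai(z)^2G(z)$ with $G=g-\mu f$, then $G'(z)=\Ai(z)H(z)$ with $H=\mu\Ai-\Bi$ (your $\phi$), and proves $H'(z)<0$ for all $z>-\ystar$: on $(-\ystar,0)$ via the Wronskian-based monotonicity of $k=\Ai'/\Bi'$, and on $[0,\infty)$ via the invariance of the sector $\set{H<0,\,H'<0}$ under the Airy equation $H''=zH$ together with $H(0)<0$, $H'(0)<0$. Since $H(-\ystar)>0$ and $H\to-\infty$, $H$ changes sign exactly once, so $G$ and then $F$ are first increasing and then decreasing, and the boundary values $F(-\ystar)=0$, $F(z)\to0$ force $F>0$ on $(-\ystar,\infty)$. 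Some argument of this type (or a genuinely completed domination estimate for $z>0$, which you only anticipate) is what your proposal still lacks.
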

\begin{proof}
Since the limit as $\yin \to -\infty$ is equal to $\frac34$, and the integral 
defining $D$ vanishes when $\yin = y^\star$, it suffices to show that $D$ is decreasing 
in $\yin$. 
The derivative of $D$ with respect to $\yin$ is given by 
$-F(-\yin)/\Ai'(-y^\star)^2$, where 
\begin{equation}
F(z) :=2\pi\Biggpar{\frac{\Bi'(-\ystar)}{\Ai'(-\ystar)}f(z) - g(z)}f(z) 
+ \Ai(z)^2\;.
\end{equation}
Note in particular that 
\begin{equation}
F(-\ystar)=0\;, 
\qquad 
\lim_{z\to\infty}F(z)=0\;,
\end{equation}
A simple way to study the sign of $F$ in the interval $(-y^\star,\infty)$ 
is to study the behaviour of its derivative. One finds, after a computation, 
\begin{equation}
F'(z)=4\pi\Ai(z)^2 G(z)\;, 
\qquad
G(z):= g(z)-\frac{\Bi'(-\ystar)}{\Ai'(-\ystar)}f(z)\;.
\end{equation}
Hence the sign of $F'$ is the same as the sign of $G$ for $z>-\ystar$. 
Furthermore, 
\begin{equation}
G(-\ystar)=0\;, 
\qquad 
\lim_{z\to\infty}G(z)=0\;,
\end{equation}
Again, to study the sign of $G$, we compute its derivative, which is  
\begin{equation}
G'(z) =\Ai(z)H(z)\;, \qquad H(z):= \frac{\Bi'(-\ystar)}{\Ai'(-\ystar)}\Ai(z)-\Bi(z)\;.
\end{equation}
Since $\Ai(z)>0$ for all $z>-\ystar$, $G'$ has the same sign as $H$. 
Thus the proof reduces to the study of $H$, which is much simpler than $F$ and $G$. 
One checks that 
\begin{equation}
H(-\ystar)=-\Bi(-\ystar)=\frac{1}{\pi\Ai'(-\ystar)}>0\;, 
\qquad 
\lim_{z\to\infty} H(z)=-\infty\;,
\end{equation}
and direct differentiation gives 
\begin{equation}
H'(-\ystar)=0\;.
\end{equation}
This means that $H$ starts positive at $z=-\ystar$, with horizontal tangent there, and 
tends to $-\infty$ as $z\to \infty$. We want to prove that $H'(z) < 0$ for all $z > -y^\star$. 
Indeed, this would imply that $G$, and thus $F$, is first increasing, and then decreasing to 
zero, and therefore strictly positive on $(-y^\star, \infty)$.

We first prove that $H'$ is strictly negative on $(-\ystar, 0)$. 
Differentiating $H$ gives after some rearranging
\begin{equation}
H'(z)=\Bi'(z)\frac{\Bi'(-\ystar)}{\Ai'(-\ystar)}(k(z)-k(-\ystar))\;, 
\qquad 
k(z):=\frac{\Ai'(z)}{\Bi'(z)}\;,
\end{equation}
Using the Wronskian identity~\eqref{eq:Wronskian}, one gets
\begin{equation}
k'(z)=\frac{z}{\pi \Bi'(z)^2}\;,
\end{equation}
which is negative for $-\ystar<z<0$. 
It is known that there exists a value $-y^\star_1 \in (-y^\star,0)$ such that 
$\Bi'(z) < 0$ on $[-y^\star,-y^\star_1)$ and $\Bi'(z) > 0$ on $(-y^\star_1,\infty)$. 
It follows that $k(z)$ is negative and decreasing on $(-y^\star,-y^\star_1)$, 
with a vertical asymptote at $-y^\star_1$, and positive and decreasing on 
$(-y^\star_1,\infty)$. In both cases, $H'(z)$ is strictly negative. 

It remains to show that $H'(z)$ is strictly negative on $[0,\infty)$. 
Since $\Ai$ and $\Bi$ are independent solutions of the equation $y''=zy$, 
any linear combination satisfies the same ODE. Therefore
\begin{equation}
\label{eq:ODE}
H''(z) = zH(z)\;.
\end{equation}
This ODE leaves the sector ${H<0, H'<0}$ invariant for $z>0$. 
One can check, using properties of Airy functions, that $H(0)<0$
and $H'(0) < 0$. Therefore, the proof is complete. 
\end{proof}

 
\appendix 
 
\section{Airy functions}
\label{sec:airy} 


The Airy functions $\Ai(z)$ and $\Bi(z)$ are by definition two independent 
solutions of the linear, non-autonomous ordinary differential equation 
\begin{equation}
 y''(z) = z y(z)\;.
\end{equation} 
They can be defined by 
\begin{align}
\Ai(z) &= \frac1\pi \int_0^\infty \cos\biggpar{\frac{\xi^3}{3} + z\xi} \6\xi \\
\Bi(z) &= \frac1\pi \int_0^\infty 
\biggbrak{\exp\biggpar{-\frac{\xi^3}{3} + z\xi} + \sin\biggpar{\frac{\xi^3}{3} + z\xi}} \6\xi\;.
\end{align} 
See Figure~\ref{fig:Airy} for plots.

\begin{figure}[h!]
\begin{center}
\input{TIKZ/figAiry.tex}
\end{center}
\caption[]{Airy functions $\Ai(z)$ and $\Bi(z)$.}
\label{fig:Airy} 
\end{figure}

Their Wronskian satisfies 
\begin{equation}
\label{eq:Wronskian} 
W(z) 
:= \det 
\begin{pmatrix}
\Ai(z) & \Bi(z) \\
\Ai'(z) & \Bi'(z)
\end{pmatrix}
= \Ai(z)\Bi'(z) - \Ai'(z)\Bi(z)
= \frac{1}{\pi}\;.
\end{equation} 
Their asymptotic behaviour for large positive $z$ (see for instance~\cite{Olver_book}) is given by 
\begin{align}
\Ai(z) &= \frac{z^{-1/4}}{2\sqrt{\pi}} \e^{-\xi} u(-\xi)\;, &
\Bi(z) &= \frac{z^{-1/4}}{\sqrt{\pi}} \e^{\xi} u(\xi)\;, \\ 
\Ai'(z) &= -\frac{z^{1/4}}{2\sqrt{\pi}} \e^{-\xi} v(-\xi)\;, &
\Bi'(z) &= \frac{z^{1/4}}{\sqrt{\pi}} \e^{\xi} v(\xi)\;,  
\label{eq:Airy_asymptotics} 
\end{align}
where $\xi = \frac23 z^{2/3}$ and $u$ and $v$ have asymptotic expansions 
starting with 
\begin{equation}
u(\xi) = 1 + \frac{5}{72\xi} + \biggOrder{\frac{1}{\xi^2}}\;, \qquad 
v(\xi) = 1 - \frac{7}{72\xi} + \biggOrder{\frac{1}{\xi^2}}\;.
\end{equation}

\bibliographystyle{plain}
{\small \bibliography{BBZ}}

\newpage

\tableofcontents

\vfill

\bigskip\bigskip\noindent
{\small
Baptiste Bergeot \\
INSA CVL, Univ Orléans, Univ Tours, LaMé, UR 7494, Blois, France\\
{\it E-mail address: }
{\tt baptiste.bergeot@insa-cvl.fr}
}

\bigskip\bigskip\noindent
{\small
Nils Berglund \\
Institut Denis Poisson (IDP) \\ 
Universit\'e d'Orl\'eans, Universit\'e de Tours, CNRS -- UMR 7013 \\
B\^atiment de Math\'ematiques, B.P. 6759\\
45067~Orl\'eans Cedex 2, France \\
{\it E-mail address: }
{\tt nils.berglund@univ-orleans.fr}

\bigskip\bigskip\noindent
{\small
Israa Zogheib \\
Institut Denis Poisson (IDP) \\ 
Universit\'e d'Orl\'eans, Universit\'e de Tours, CNRS -- UMR 7013 \\
B\^atiment de Math\'ematiques, B.P. 6759\\
45067~Orl\'eans Cedex 2, France \\
{\it E-mail address: }
{\tt israa.zogheib@etu.univ-orleans.fr}

\end{document}